\documentclass[3p]{elsarticle}

\usepackage{amsmath}
\usepackage{amsthm}
\usepackage{amsfonts}
\usepackage{amssymb}

\usepackage{lineno,hyperref}
\modulolinenumbers[5]

\newtheorem{theorem}{Theorem}[section]
\newtheorem{lemma}[theorem]{Lemma}
\newtheorem{prop}[theorem]{Proposition}
\newtheorem{cor}[theorem]{Corollary}

\theoremstyle{definition}
\newtheorem{rem}[theorem]{Remark}
\theoremstyle{definition}
\newtheorem{definition}[theorem]{Definition}

\newtheorem{hypothesis}[theorem]{Hypothesis}

\DeclareMathOperator{\dv}{div}

\newcommand{\pO}{{\partial \Omega}}
\newcommand{\pOt}{{\partial \Omega_t}}
\newcommand{\Ht}{H^{3/2}(\pO)}
\newcommand{\Hp}{H^{1/2}(\pO)}
\newcommand{\Hm}{H^{-1/2}(\pO)}
\newcommand{\Hh}{\Hp \oplus \Hm}

\newcommand{\wtf}{\widetilde f}
\newcommand{\wtg}{\widetilde g}
\newcommand{\wth}{\widetilde h}
\newcommand{\wtX}{\widetilde X}
\newcommand{\wtp}{\widetilde \varphi}

\newcommand{\p}{\partial}

\newcommand{\bbR}{{\mathbb{R}}}

\newcommand{\cD}{{\mathcal D}}

\newcommand{\cH}{{\mathcal H}}

\newcommand{\cO}{{\mathcal O}}




\DeclareMathOperator{\Tr}{Tr}
\DeclareMathOperator{\tr}{tr}

\newcommand{\loc}{\text{\rm{loc}}}

\newcommand{\beq}{\begin{equation}}
\newcommand{\enq}{\end{equation}}

\journal{Journal of \LaTeX\ Templates}









\bibliographystyle{elsarticle-num}

\begin{document}

\begin{frontmatter}

\title{A dynamical approach to semilinear elliptic equations}

\author[MBaddress]{Margaret Beck}
\ead{mabeck@bu.edu}
\address[MBaddress]{Department of Mathematics and Statistics, Boston University, Boston, MA 02215, USA}

\author[GCaddress]{Graham Cox\corref{mycorrespondingauthor}}
\cortext[mycorrespondingauthor]{Corresponding author}
\ead{gcox@mun.ca}
\address[GCaddress]{Department of Mathematics and Statistics, Memorial University of Newfoundland, St. John's, NL A1C 5S7, Canada}

\author[CJaddress]{Christopher Jones}
\ead{ckrtj@email.unc.edu}
\address[CJaddress]{Department of Mathematics, University of North Carolina at Chapel Hill, Chapel Hill, NC 27599, USA}

\author[YLaddress]{Yuri Latushkin}
\ead{latushkiny@missouri.edu}
\address[YLaddress]{Department of Mathematics, University of Missouri, Columbia, MO 65211, USA}

\author[ASaddress]{Alim Sukhtayev}
\ead{sukhtaa@miamioh.edu}
\address[ASaddress]{Department of Mathematics, Miami University, Oxford, OH 45056, USA}

\begin{abstract}
A characterization of a semilinear elliptic partial differential equation (PDE) on a bounded domain in $\mathbb R^n$ is given in terms of an infinite-dimensional dynamical system. The dynamical system is on the space of boundary data for the PDE. This is a novel approach to elliptic problems that enables the use of dynamical systems tools in studying the corresponding PDE. The dynamical system is ill-posed, meaning solutions do not exist forwards or backwards in time for generic initial data. We offer a framework in which this ill-posed system can be analyzed. This can be viewed as generalizing the theory of spatial dynamics, which applies to the case of an infinite cylindrical domain.
\end{abstract}

\begin{keyword}
semilinear equations \sep spatial dynamics \sep dynamical systems
\MSC[2010] 35J67 \sep  35A24 \sep 34D09 \sep 35J25
\end{keyword}

\end{frontmatter}


\section{Introduction}
A standard trick in dynamical systems is to write the differential equation $u_{xx} + F(u) = 0$ as a first-order system
\begin{align*}
	u_x &= v \\
	v_x &= -F(u).
\end{align*}
This allows for the application of dynamical systems methods, such as phase plane analysis, exponential dichotomies, and the Evans function; see, for instance, \cite{KPP13} and references therein for a modern perspective.

Similarly, on an infinite cylindrical domain $\Omega = \bbR \times \Omega' \subset \bbR^n$, the semilinear partial differential equation $\Delta u + F(u) = 0$ can be written in the form
\begin{align}\label{eqn:cylinder}
\begin{split}
	u_x &= v \\
	v_x &= -F(u) - \Delta_y u,
	\end{split}
\end{align}
where $(x,y) \in \bbR \times \Omega'$ and $\Delta_y$ denotes the Laplacian on the cross-section $\Omega' \subset \bbR^{n-1}$. In this case the phase space is infinite-dimensional, and the analysis requires more care. In particular, the equation is ill-posed both forwards and backwards in time. As a result, it is nontrivial to prove existence of solutions. The idea of rewriting the semilinear PDE as an evolution equation along the cylindrical direction is the basis of the area now known as {\em Spatial Dynamics}, see \cite{A84,BSZ10,DSSS09,G86,K82,LP08,M86,M88,M91,PSS97,S02,SS01,S03}.

In this paper we extend this correspondence to general Euclidean domains. That is, we obtain the analog of \eqref{eqn:cylinder} for a bounded domain $\Omega$ which is smoothly contracted to a point through a one-parameter family $\{\Omega_t\}$. In this case $t$ becomes the dynamical variable with respect to which we study the evolution of the boundary data. While similar in spirit to the cylindrical case described above, the analysis is complicated by the nontrivial geometry and the fact that the resulting system of equations becomes singular as the domain degenerates to a point.

\subsection*{Outline of the paper}
In Section \ref{sec:harmonic} we motivate our general construction and results by studying harmonic functions in $\bbR^3$, where the computations can be done explicitly. In Section \ref{sec:results} we present the general framework and state all of the major results. Section \ref{sec:geo} contains some geometric preliminaries that will be needed for our analysis. The infinite-dimensional dynamical system is studied in Section \ref{sec:evolution}, where we prove its equivalence to the original semilinear PDE.  Finally, in Section \ref{sec:dichotomy} we describe exponential dichotomies for the linearized dynamical system, in particular proving that the unstable dichotomy subspace (if it exists) coincides with the space of Cauchy data of weak solutions to the PDE.

\subsection*{Acknowledgements}
The authors would like to acknowledge the support of the American Institute of Mathematics and the Banff International Research Station, where much of this work was carried out. M.B. acknowledges the support of NSF grant DMS-1411460 and of an AMS Birman Fellowship. G.C. acknowledges the support of NSERC grant RGPIN-2017-04259. C.J. was supported by ONR grant N00014-18-1-2204. Y.L. was supported by the NSF grant  DMS-1710989, by the Research Board and Research Council of the University of Missouri, and by the Simons Foundation. A.S. was supported by NSF grant DMS-1910820.

\section{A motivating example: harmonic functions in $\bbR^3$}\label{sec:harmonic}

Suppose that $u(r,\theta,\phi)$ is a harmonic function in $\bbR^3$. Let $\Omega_t = \{x : |x| < t \} \subset \bbR^3$, and consider the functions
\begin{align*}
	f(t) := u(t,\cdot,\cdot), \quad
	g(t) := \frac{\p u}{\p r}(t,\cdot,\cdot),
\end{align*}
which are in $C^\infty(S^2)$ for $t>0$. We refer to the pair $(f(t),g(t))$ as the \emph{Cauchy data} (or \emph{boundary data}) of $u$ on the surface $\pOt = \{x \in \bbR^3 : |x| = t\}$. Note that $f(t)$ is just the function $u$ evaluated at radius $r=t$. We have introduced the new variable $t$ to emphasize that we are viewing this as an evolutionary variable, rather than a spatial coordinate.

Differentiating $f$ with respect to $t$, we obtain
\[
	\frac{df}{dt} = \frac{\p u}{\p r} \bigg|_{r=t} = g(t).
\]
To differentiate $g$ we use the formula $\Delta u = u_{rr} + 2 r^{-1} u_r + r^{-2} \Delta_{S^2} u$, where $\Delta_{S^2}$ denotes the Laplace--Beltrami operator on the sphere:
\[
	\Delta_{S^2} f = \frac{1}{\sin\theta} \frac{\p}{\p\theta} \left( \sin\theta \frac{\p f}{\p \theta} \right) + \frac{1}{\sin^2\theta} \frac{\p^2 f}{\p \phi^2}.
\]
Since $\Delta u = 0$, it follows that
\begin{align*}
	\frac{dg}{dt} &= \frac{\p^2 u}{\p r^2} \bigg|_{r=t} = - \left.\left( \frac{2}{r} \frac{\p u}{\p r} + \frac{1}{r^2} \Delta_{S^2} u \right) \right|_{r=t} = - \frac{1}{t^2} \Delta_{S^2} f(t) - \frac{2}{t} g(t)
\end{align*}
and so for all $t>0$, $f$ and $g$ satisfy the linear system
\begin{align}\label{R3FGham}
	\frac{d}{dt} \begin{pmatrix} f \\ g \end{pmatrix} = 
	\begin{pmatrix} 0 & 1 \\ - t^{-2} \Delta_{S^2} & -2 t^{-1} \end{pmatrix} \begin{pmatrix} f \\ g \end{pmatrix}.
\end{align}

The operator appearing on the right-hand side of \eqref{R3FGham} has spectrum unbounded in both directions. As a result, the system is ill-posed, meaning one cannot expect a solution to exist forward \emph{or} backward in time for generic initial data. However, this system admits an \emph{exponential dichotomy}---that is, a splitting of the phase space into two subspaces, both infinite-dimensional, on which solutions exists forward and backward in time, respectively.

To see this, we first rescale $f$ and $g$, multiplying them by appropriate powers of $t$, namely $t^{\alpha} f(t)$ and $t^{1+\alpha} g(t)$, where $\alpha$ is a real constant to be determined. We then reparametrize by defining a new variable $\tau = \log t$, resulting in the functions
\[
	\widetilde f(\tau) = e^{\alpha\tau} f(e^\tau), \quad \widetilde g(\tau) = e^{(1+\alpha)\tau} g(e^\tau),
\]
which are defined for all $\tau \in \bbR$. It follows from \eqref{R3FGham} that
\begin{align}\label{R3FG}
	\frac{d}{d\tau} \begin{pmatrix} \widetilde f \\ \widetilde g \end{pmatrix} = 
	\begin{pmatrix} \alpha & 1\\ - \Delta_{S^2} & \alpha - 1 \end{pmatrix} \begin{pmatrix} \widetilde f \\ \widetilde g \end{pmatrix}
\end{align}
for all $\tau \in \bbR$. The eigenvalues of the operator matrix on the right-hand side are
\[
	\nu_l^\pm = \frac{(2\alpha - 1) \pm\sqrt{4\mu_l + 1}}{2},
\]
where $0 = \mu_0 < \mu_1 < \mu_2 < \cdots$ are the distinct eigenvalues of $-\Delta_{S^2}$. These are given by $\mu_l = l(l+1)$ for integers $l \geq 0$. The corresponding eigenfunctions are the spherical harmonics $Y_l^m(\theta,\phi)$ for $-l \leq m \leq l$, hence $\mu_l$ has multiplicity $2l+1$. It follows that
\[
	\nu^+_l = \alpha + l, \quad \nu^-_l = \alpha - l - 1,
\]
with the corresponding solutions to \eqref{R3FG} given by
\begin{align*}
	\Big( \wtf^+_{lm}(\tau), \wtg^+_{lm}(\tau)\Big) &= \big( e^{(\alpha + l) \tau } Y_l^m, \, l e^{(\alpha + l) \tau } Y_l^m \big) \\
	\Big( \wtf^-_{lm}(\tau), \wtg^-_{lm}(\tau)\Big) &= \big( e^{(\alpha - l - 1) \tau} Y_l^m, \, -(l+1) e^{(\alpha - l - 1) \tau} Y_l^m \big)
\end{align*}
for $-l \leq m \leq l$. Undoing the scaling and reparametrization yields
\begin{align*}
	\big( f^+_{lm}(t), g^+_{lm}(t) \big) &= \big(t^l Y_l^m, \, l t^{l-1} Y_l^m \big)  \\
	\big( f^-_{lm}(t), g^-_{lm}(t) \big) &= \big( t^{-l-1} Y_l^m, \, -(l+1) t^{-l-2} Y_l^m \big).
\end{align*}
Observe that the pair $\big(f^+_{lm}(t), g^+_{lm}(t) \big)$ is precisely the Cauchy data on $\pOt$ of the harmonic function $u(r,\theta,\phi) = r^l Y_l^m(\theta,\phi)$. Similarly, $\big(f^-_{lm}(t), g^-_{lm}(t) \big)$ is the Cauchy data of $u(r,\theta,\phi) = r^{-l-1} Y_l^m(\theta,\phi)$.

For any $\tau_0 \in \bbR$, the \emph{unstable subspace} of \eqref{R3FG}, denoted $E^u(\tau_0)$, consists of functions $(\widetilde f_0, \widetilde g_0)$ with the property that there exists a solution $(\widetilde f(\tau),\widetilde g(\tau))$ to \eqref{R3FG} that is defined for all $\tau \leq \tau_0$, satisfies the terminial condition $(\widetilde f(\tau_0),\widetilde g(\tau_0)) = (\wtf_0, \wtg_0)$, and decays exponentially as $\tau\to-\infty$. Similarly, the \emph{stable subspace} of \eqref{R3FG}, $E^s(\tau_0)$, consists of functions $(\widetilde f_0, \widetilde g_0)$ with the property that there exists a solution $(\widetilde f(\tau),\widetilde g(\tau))$ to \eqref{R3FG} that is defined for all $\tau \geq \tau_0$, satisfies the initial condition $(\widetilde f(\tau_0),\widetilde g(\tau_0)) = (\wtf_0, \wtg_0)$, and decays exponentially as $\tau\to\infty$. To determine the stable and unstable subspaces, we must identify the solutions $(\widetilde f^\pm_{lm},\widetilde g^\pm_{lm})$ for which the corresponding spatial eigenvalue $\nu^\pm_l$ is negative, and those for which it is positive, respectively. This depends on the scaling parameter $\alpha$, which has not yet been specified. We seek $\alpha$ so that the unstable subspace corresponds to the Cauchy data of all harmonic functions that are bounded at the origin. This will be the case if $\nu^+_l > 0$ and $\nu^-_l < 0$ for all $l$. This is equivalent to $\nu^-_0 < 0 < \nu^+_0$, and so any $\alpha \in (0,1)$ will suffice.

In summary, we have seen that for $0 < \alpha < 1$ the system \eqref{R3FG} admits an exponential dichotomy such that: 1) the unstable subspace $E^u(\tau)$ consists of the Cauchy data on the surface $\{r = e^\tau\}$ of harmonic functions that are bounded at the origin; and 2) the stable subspace $E^s(\tau)$ consists of the Cauchy data on $\{r = e^\tau\}$ of harmonic functions that decay at infinity.

\begin{rem}\label{rem:2Dharmonic}
A similar analysis carries through in the planar case, and an exponential dichotomy arises in the same manner. However, the situation is complicated by the fact that the evolution equation has a two-dimensional center subspace, corresponding to the harmonic functions $1$ and $\log r$. While $\log r$ blows up as $r\to0$, it does so very slowly, in the sense that $r^\alpha \log r \to 0$ for any $\alpha > 0$. On the other hand, if $\alpha < 0$, then both $r^\alpha$ and $r^\alpha \log r$ are unbounded at the origin. As a result, no choice of $\alpha$ is able to distinguish (in terms of growth or decay) $\log r$ from a constant function. Therefore, the stable and unstable subspaces do not admit the same interpretation as in the higher-dimensional case. This phenomenon will be observed again below; see Corollary \ref{cor:alphagap} and Remark \ref{rem:n2},
\end{rem}

The main objective of this paper is to generalize the preceding constructions to semilinear elliptic equations on $\bbR^n$.

\section{Definitions and results}\label{sec:results}
We generalize \eqref{eqn:cylinder} by considering a smooth family of domains $\{\Omega_t\}$ in $\bbR^n$ and describing the time evolution of the quantities
$\left.u\right|_{\pOt}$ and $\p u/\p \nu\big|_{\pOt}$, where $u \colon \bbR^n \to \bbR$ solves the semilinear equation
\begin{align}\label{eqn:PDE}
	\Delta u + F(x,u) = 0.
\end{align}

We first describe the types of domains $\Omega_t$ to which our method applies. We let
\begin{align}\label{omegadef}
	\Omega_t = \{x \in \bbR^n : \psi(x) < t^2 \},
\end{align}
for a suitable function $\psi\colon \bbR^n \to \bbR$. We assume the following for the remainder of the paper.

\begin{hypothesis}\label{hyp:geo}
The function $\psi$ has the following properties:
\begin{enumerate}
	\item $\psi \in C^3(\bbR^n, \bbR)$;
	\item $\psi$ has a nondegenerate minimum at $x=0$, with $\psi(0) = 0$;
	\item $\psi$ has no other critical points;
	\item $\psi$ is proper (i.e. preimages of compact sets are compact).
\end{enumerate}
\end{hypothesis}

These assumptions on $\psi$ are motivated by the example $\psi(x) = |x|^2$, which satisfies Hypothesis \ref{hyp:geo}, and leads to the family of domains $\Omega_t = \{x : |x|<t\}$. In general, the nondegeneracy of $\psi$ ensures the domains shrink to a point in a sufficiently regular manner at $t \to 0$. By the Morse lemma there exist coordinates $(y_1, \ldots, y_n)$ near the origin such that $\Omega_t = \left\{ y_1^2 + \cdots y_n^2 < t^2 \right\}$; see \cite{M63}. In this sense any function $\psi$ satisfying Hypothesis \ref{hyp:geo} locally resembles $|x|^2$.

For any $0 \leq a < b < \infty$ we define
\begin{align}\label{annulus}
	\Omega_b = \{x \in \bbR^n : \psi(x) < b^2 \}, \quad \Omega_{a,b} = \{x \in \bbR^n : a^2 < \psi(x) < b^2\},
\end{align}
so that $\Omega_b$ is diffeomorphic to an open ball and $\Omega_{a,b}$ is diffeomorphic to an annulus. A case of particular interest is $a=0$, where the domain is a punctured ball, $\Omega_{0,b} = \Omega_b \setminus \{0\}$.

To understand the evolution of $u$ and its normal derivative restricted to $\pOt$, we need a smooth parameterization of the domains. For convenience we define a fixed  ``reference domain" $\Omega$ by
\begin{align}
	\Omega = \Omega_1 = \{x \in \bbR^n : \psi(x) < 1\}.
\end{align}
The dynamical system we formulate is defined on the boundary, $\pO = \{x \in \bbR^n : \psi(x) = 1\}$. This is related to each $\pOt$ by a family of diffeomorphisms $\{\varphi_t\}$ whose existence is established in Section~\ref{sec:flow}.

\begin{lemma}\label{lemma:phit}
Suppose $\psi$ satisfies Hypothesis \ref{hyp:geo}, and define $\{\Omega_t\}_{t>0}$ by \eqref{omegadef}. Then there exists a family of diffeomorphisms $\{\varphi_t\}_{t>0}$ on $\bbR^n$ such that $\varphi_t(\Omega) = \Omega_t$ for each $t>0$, and
\[
	\varphi_s \circ \varphi_t = \varphi_{st}
\]
for any $s,t > 0$. In particular, $\varphi_1 = \operatorname{id}$.
\end{lemma}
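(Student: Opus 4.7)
The plan is to realize $\{\varphi_t\}_{t>0}$ as the flow of a time-independent vector field, reparametrized logarithmically so that composition in $t$ is multiplicative. Concretely, I would seek a $C^1$ vector field $X$ on $\bbR^n$ with $X(0)=0$ satisfying the pointwise identity $\nabla\psi\cdot X = 2\psi$. Given such $X$, let $\Phi_\tau$ denote its flow and set $\varphi_t := \Phi_{\log t}$ for $t>0$. The semigroup and identity properties then reduce to the one-parameter group property of $\Phi$: $\varphi_1 = \Phi_0 = \mathrm{id}$, and $\varphi_s\circ\varphi_t = \Phi_{\log s}\circ\Phi_{\log t} = \Phi_{\log(st)} = \varphi_{st}$. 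The defining identity moreover forces $\frac{d}{d\tau}\psi(\Phi_\tau(x)) = 2\psi(\Phi_\tau(x))$ along every orbit, hence $\psi(\Phi_\tau(x)) = e^{2\tau}\psi(x)$; consequently $x\in\Omega$ if and only if $\varphi_t(x)\in\Omega_t$, giving $\varphi_t(\Omega)=\Omega_t$.

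To construct $X$ I would patch two natural candidates. Away from the origin, where $\nabla\psi$ is nonvanishing by Hypothesis~\ref{hyp:geo}, the explicit formula
\[
X_1(x) := \frac{2\psi(x)}{|\nabla\psi(x)|^2}\,\nabla\psi(x)
\]
satisfies $\nabla\psi\cdot X_1 = 2\psi$. Near the origin I would invoke the Morse lemma to obtain local coordinates $y=h(x)$ in which $\psi = |y|^2$; transporting the Euler field back to $x$-coordinates via $X_2(x) := [Dh(x)]^{-1}h(x)$ produces a vector field on a neighborhood $U$ of $0$ with $X_2(0)=0$ and $\nabla\psi\cdot X_2 = 2\psi$ on $U$. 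Because the identity $\nabla\psi\cdot X = 2\psi$ is affine in $X$, it is preserved by convex combinations; so with a smooth partition of unity $\{\rho_1,\rho_2\}$ subordinate to the open cover $\{\bbR^n\setminus\{0\},\,U\}$, the assembled field $X := \rho_1 X_1 + \rho_2 X_2$ is globally defined, vanishes at $0$, and obeys $\nabla\psi\cdot X = 2\psi$ everywhere.

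With $X$ in hand, completeness of $\Phi_\tau$ is essentially automatic. The origin is a fixed point, and for $x\neq 0$ the growth law $\psi(\Phi_\tau(x)) = e^{2\tau}\psi(x)$ combined with properness of $\psi$ confines the orbit to a compact subset of $\bbR^n$ on any bounded $\tau$-interval, ruling out finite-time blowup in forward time; as $\tau\to-\infty$ the orbit is drawn toward $0$ but, by uniqueness of ODE solutions, cannot reach it in finite backward time. Thus $\Phi_\tau$ is a diffeomorphism of $\bbR^n$ for every $\tau\in\bbR$, and $\varphi_t = \Phi_{\log t}$ inherits all the stated properties.

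The main obstacle lies in constructing $X$ with enough regularity at the critical point. The naive formula $X_1$ extends continuously to the origin with value $0$, but a Taylor expansion using $\psi(x) = \tfrac12 x^\top H x + O(|x|^3)$ with $H = D^2\psi(0)$ shows that its leading term near $0$ is $\frac{x^\top H x}{x^\top H^2 x}\,Hx$, which is $1$-homogeneous but generally not linear unless $H$ is a scalar multiple of the identity; hence $X_1$ typically fails to be Fr\'echet differentiable at $0$. The Morse lemma circumvents this by furnishing a chart in which $\psi$ becomes the standard quadratic form, so that the Euler field provides the required smooth model at the degeneration point.
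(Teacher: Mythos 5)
Your overall scheme is the same as the paper's: construct a $C^1$ vector field $X$ with $\nabla\psi\cdot X=2\psi$ vanishing at the origin, take its flow $\Phi_\tau$, set $\varphi_t=\Phi_{\log t}$, and use $\psi(\Phi_\tau(x))=e^{2\tau}\psi(x)$ together with properness of $\psi$ for completeness. Where you diverge is the construction of $X$ near the critical point. The paper adds to the purely normal field $2\psi\,\nabla\psi/|\nabla\psi|^2$ the tangential projection of the radial field $x\mapsto x$, cut off near the origin; a direct Taylor computation (using $\psi\in C^3$) then shows $\wtX(x)=x+\cO(|x|^2)$ with $D\wtX(x)-I=\cO(|x|)$, so $\wtX$ is $C^1$. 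You instead transport the Euler field through a Morse chart $h$.

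Here is the gap. With $\psi\in C^3$, the classical Morse lemma (including the version in Milnor, which the paper cites) produces a coordinate change $h$ that is only $C^1$: the Hadamard/Taylor-remainder construction and Moser's homotopy method both lose the derivatives needed to make $h$ more regular. Then $X_2=[Dh]^{-1}h$ is merely continuous near $0$, not Lipschitz, and your patched field $X=\rho_1X_1+\rho_2X_2$ need not generate a uniquely defined flow, much less a flow by diffeomorphisms; yet you explicitly invoke uniqueness of ODE solutions to rule out orbits reaching the origin. You correctly identify regularity at the critical point as the obstruction, but the Morse lemma does not provide enough of it under Hypothesis~\ref{hyp:geo}. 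To close the gap you would need either to strengthen the hypothesis (e.g.\ $\psi\in C^4$ to get $h\in C^2$ and hence $X_2\in C^1$), or to cite and verify a sharper Morse lemma that gives a $C^2$ chart from $C^3$ data. The paper's construction is engineered precisely to avoid this loss: the specific tangential correction makes the error term $\wtX-x$ divisible by $|\nabla\psi|^2$ in a way whose $C^1$ bound is checkable directly from the $C^3$ Taylor expansion of $\psi$, with no coordinate change at all.
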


It follows that $\varphi_s(\Omega_t) = \Omega_{st}$ for any $s,t > 0$. The family $\{\varphi_t\}_{t>0}$ satisfies a group property with respect to the multiplicative group of positive real numbers. Perhaps more naturally, it can be viewed as an additive group with respect to the variable $\tau = \log t$, because $\varphi_{\exp(\tau_1)} \circ \varphi_{\exp(\tau_2)} = \varphi_{\exp(\tau_1 + \tau_2)}$.

The flow $\{\varphi_t\}$ is generated by a nonautonomous vector field $X$, satisfying
\begin{align}\label{Xdef}
	X(\varphi_t(x),t) = \frac{d }{dt} \varphi_t(x)
\end{align}
for any $x \in \bbR^n$ and $t>0$. We define a function $\sigma \colon \bbR^n \setminus\{0\} \to \bbR$ as follows. If $x \neq 0$, then $x \in \pOt$ for some $t>0$, namely $t = t(x) = \sqrt{\psi(x)}$. Using this, we let
\begin{align}\label{sigmadef}
	\sigma(x) = X(x,t(x)) \cdot \nu_x,
\end{align}
where $\nu_x$ denotes the outward unit normal to $\pO_{t(x)}$ at the point $x$. This function can in fact be computed directly from $\psi$; see \eqref{sigmaexplicit}. Next, for each $t>0$ we define a function $\sigma_t \colon \pO \to \bbR$ by
\begin{align}\label{sigmatdef}
	\sigma_t(y) = \sigma(\varphi_t(y)).
\end{align}
This measures the normal speed at which a point $y \in \pO$ moves under the flow, since
\[
	\frac{d}{dt} \varphi_t(y) \cdot \nu_{\varphi_t(y)} = X(\varphi_t(y),t) \cdot \nu_{\varphi_t(y)} = \sigma(\varphi_t(y)) = \sigma_t(y).
\]
At any point $x \in \bbR^n$ and $t>0$ we denote the tangential component of $X(x,t)$ by $\gamma(x,t)$, so we have the decomposition $X(x,t) = \big(X(x,t) \cdot \nu_x \big) \nu_x + \gamma(x,t)$ into normal and tangential components. If $t = t(x)$, this simplifies to
\begin{align}
	X(x,t) = \sigma(x) \nu_x + \gamma(x,t).
\end{align}
In the following sections we will always have $x = \varphi_t(y)$ for some $y \in \pO$, and hence $t = t(x)$.

We next define the Cauchy data of a solution to \eqref{eqn:PDE}.
For $u \in C^1(\bar\Omega)$ we define functions $f \colon (0,\infty) \to C^1(\pO)$ and $g \colon (0,\infty) \to C^0(\pO)$ by
\begin{align}\label{fgdef}
	f(t)(y) = u(\varphi_t(y)), \quad
	g(t)(y) = \frac{\p u}{\p \nu}(\varphi_t(y)), \quad y \in \pO,
\end{align}
then combine these to form the \emph{trace},
\begin{align}\label{def:tr}
	\Tr_t u = \left( f(t), g(t) \right).
\end{align}
Observe that $f(t)$ is just the restriction of $u$ to $\pOt$, pulled back to $\pO$ via the diffeomorphism $\varphi_t$, and similarly for $g(t)$. The advantage of $f$ and $g$ is that their domains are $t$-independent.

Now suppose that $u$ is a solution to \eqref{eqn:PDE}. If $u$ is suitably smooth, one can show (see Section \ref{sec:localproof}) that $f$ and $g$ satisfy the system of equations
\begin{align}\label{eqn:ODE}
\begin{split}
	\frac{df}{dt} &= T_t f + \sigma_t g \\
	\frac{dg}{dt} &= - \sigma_t F_t(f) - L_t f + (T_t - \sigma_t H_t) g,
\end{split}
\end{align}
where $H_t = H_{\pOt} \circ \varphi_t\big|_\pO$, with $H_{\pOt}$ denoting the mean curvature of $\pOt$, and $F_t(f) \colon \pO \to \bbR$ is defined by $F_t(f)(y) = F(\varphi_t(y),f(t)(y))$. Additionally, $T_t$ and $L_t$ are the differential operators
\begin{align}
	T_t f &= \left[ \gamma \cdot \nabla^{\pOt} \left(f \circ \varphi_t^{-1}\right) \right] \circ \varphi_t  
	\label{Ttdef}  \\
	L_t f &= \dv^{\pOt} \left[\sigma \nabla^{\pOt}(f \circ \varphi_t^{-1})\right] \circ \varphi_t.
	\label{Ltdef}
\end{align} 
In \eqref{Ttdef}, $\nabla^{\pOt}$ denotes the tangential part of the gradient, computed as
\begin{align}
	\nabla^{\pOt} u = \nabla u - \frac{\p u}{\p \nu} \nu 
\end{align}
for any function $u$ defined in a neighborhood of $\pOt$. It is easily seen that this only depends on the restriction $u\big|_{\pOt}$. The tangential divergence, $\dv^\pOt$, is minus the formal adjoint of $\nabla^{\pOt}$. For any vector field $Y$ defined in a neighborhood of $\pOt$ we can write
\begin{align}\label{div:nortan}
	(\dv Y )\big|_{\pOt} 
	= \dv^{\pOt} \big(Y^{\pOt} \big) + (Y \cdot \nu) H_{\pOt}  + \nu \cdot \nabla_\nu Y
\end{align}
where $Y^{\pOt} = Y - (Y \cdot \nu)\nu$ is the tangential part of $Y$. In particular, when $Y$ is tangential to $\pOt$, we have $Y \cdot \nu = 0$, hence $\nu \cdot \nabla_\nu Y = -Y \cdot \nabla_\nu \nu$, and so $\dv^\pOt Y = \dv Y + Y \cdot \nabla_\nu \nu$.

To make the notion of a solution to \eqref{eqn:ODE} precise, we define the Hilbert spaces
\begin{align}\label{def:H}
	\cH = \Hp \oplus \Hm, \quad \cH^1 = \Ht \oplus \Hp.
\end{align}

\begin{definition}\label{def:ODEsol}
Let $J \subset \bbR_+ = (0,\infty)$ be an open interval. The pair $(f,g)$ is said to be a solution to \eqref{eqn:ODE} on $J$ if
\[
	(f,g) \in C^0(J, \cH^1) \cap C^1(J,\cH) \cap C^0(\bar{J},\cH), \quad F_{t}(f) \in L_{\loc}^2(J, L^2(\pO)),
\]
and $(f,g)$ satisfies \eqref{eqn:ODE} on $J$ with values in $\cH$. Here $\bar J$ denotes the closure of $J$ in $\bbR_+$, so $\overline{(0,T)} = (0,T]$ for any $T<\infty$.
\end{definition}

We also need to define the notion of a weak solution to the semilinear problem \eqref{eqn:PDE}.

\begin{definition}\label{def:PDEsol}
Let $\Omega \subset \bbR^n$ be a bounded domain with Lipschitz boundary (such as $\Omega_{a,b}$ or $\Omega_b$ for some $0 < a < b < \infty$). A function $u$ is said to be a weak solution to \eqref{eqn:PDE} on $\Omega$ if $u \in H^1(\Omega)$, $F(\cdot,u) \in L^2(\Omega)$, and
\begin{align}\label{def:weak}
	\int_\Omega \nabla u \cdot \nabla v = \int_\Omega F(\cdot,u) v \quad \text{ for all }v \in H^1_0(\Omega).
\end{align}
We then say that $u$ is a weak solution on $\Omega_{0,b}$ if it is a weak solution on $\Omega_{a,b}$ for all $a \in (0,b)$. Finally, $u$ is a weak solution on $\bbR^n$ (resp. $\bbR^n \setminus \{0\}$) if it is a weak solution on $\Omega_b$ (resp. $\Omega_{0,b}$) for all $b > 0$.
\end{definition}

\begin{rem}
More generally, \eqref{def:weak} makes sense for any $F(\cdot,u) \in H^{-1}(\Omega)$. For instance, this will be the case if $F$ satisfies a uniform growth assumption $|F(x,z)| \leq C |z|^{(n+2)/(n-2)}$ for all $x \in \Omega$ and $z \in \bbR$. However, the stronger condition $F(\cdot,u) \in L^2(\Omega)$ is needed in the proof of Theorem \ref{thm:local} to ensure that $u \in H^2_{\rm loc}(\Omega_{a,b})$, and hence $\Tr_t u = (f(t),g(t)) \in \cH^1$ for $a < t < b$.
\end{rem}

We can now state our first result relating the boundary data $(f,g)$ to $u$. It says that the PDE \eqref{eqn:PDE} on the deleted ball $\Omega_{0,T} = \Omega_T \setminus \{0\}$ is equivalent to the ODE \eqref{eqn:ODE} on the interval $(0,T)$.

\begin{theorem}\label{thm:local}
Suppose $0 < T < \infty$. If $u$ is a weak solution to \eqref{eqn:PDE} on $\Omega_{0,T}$, then $(f,g) = \Tr_t u$ is a solution to \eqref{eqn:ODE} on $(0,T)$. Conversely, if $(f,g)$ solves \eqref{eqn:ODE} on $(0,T)$, then there exists a weak solution $u$ to \eqref{eqn:PDE} on $\Omega_{0,T}$ with $\Tr_t u = (f,g)$ for all $t \in (0,T)$.
\end{theorem}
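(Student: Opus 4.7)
\emph{Proof plan.} The claim is a two-way equivalence, so I treat the two directions separately; the backward direction is the technically harder of the two.

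\emph{Forward direction (PDE $\Rightarrow$ ODE).} Suppose $u$ is a weak solution of \eqref{eqn:PDE} on $\Omega_{0,T}$. Since $\Delta u = -F(\cdot,u) \in L^2_{\loc}(\Omega_{0,T})$, interior elliptic regularity yields $u \in H^2_{\loc}(\Omega_{0,T})$. Hence on each compactly contained annulus $\Omega_{a,b}$ with $0 < a < b < T$, the trace theorem applied to $u$ and $\nabla u$ gives $(f(t),g(t)) \in \cH^1$ with continuous dependence on $t$, and the coarea formula associated with $\psi$ shows $F_t(f) \in L^2_{\loc}((0,T),L^2(\pO))$. To derive \eqref{eqn:ODE}, first verify it for smooth $u$. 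The first equation follows by the chain rule applied to $f(t)(y) = u(\varphi_t(y))$, using $X = \sigma\nu + \gamma$ and identifying the tangential contribution with $T_t f$. The second equation is obtained by differentiating $g(t)(y) = \nabla u(\varphi_t(y)) \cdot N(\varphi_t(y))$, where $N = \nabla\psi/|\nabla\psi|$ is a smooth extension of $\nu$ to a neighborhood of $\pOt$; the resulting expression contains a term proportional to $u_{\nu\nu}$, which is eliminated using identity \eqref{div:nortan} applied to $Y = \nabla u$, giving
\[
u_{\nu\nu} = -F(\cdot,u) - \dv^{\pOt}(\nabla^{\pOt} u) - g H_{\pOt}.
\]
Collecting terms reproduces \eqref{eqn:ODE}; the precise grouping into $L_t$, $T_t$, and $\sigma_t H_t$ comes from the commutator between $\dv^{\pOt}$ and multiplication by $\sigma$, which absorbs the tangential derivatives of $\sigma$ into $L_t$. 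For a general weak solution, approximate $u$ by smooth $u_\varepsilon$ on $\Omega_{a,b}$ (standard mollification applies since $0 \notin \overline{\Omega_{a,b}}$), apply the smooth identity, and pass to the limit using $H^2$ convergence together with the continuity of the trace.

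\emph{Backward direction (ODE $\Rightarrow$ PDE).} Given a solution $(f,g)$ to \eqref{eqn:ODE} on $(0,T)$, define $u\colon \Omega_{0,T} \to \bbR$ by $u(\varphi_t(y)) = f(t)(y)$, which is unambiguous because $(t,y) \mapsto \varphi_t(y)$ is a diffeomorphism from $(0,T)\times\pO$ onto $\Omega_{0,T}$. Using $(f,g) \in C^0((0,T),\cH^1) \cap C^1((0,T),\cH)$ and the smoothness of the flow $\{\varphi_t\}$, a change of variables shows $u \in H^1(\Omega_{a,b})$ and $F(\cdot,u) \in L^2(\Omega_{a,b})$ for every $[a,b] \subset (0,T)$. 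Next, reversing the smooth-case calculation of $df/dt$ carried out in the forward direction and using the first ODE equation forces the identification $g(t) = (\partial u/\partial \nu)\circ\varphi_t$ as elements of $\Hp$, which is solvable because $\sigma_t > 0$ along the flow (the domains strictly expand). With $g$ identified as the normal trace of $u$, verify \eqref{def:weak} on each $\Omega_{a,b}$ by applying the coarea formula associated with $\psi$ to $\int_{\Omega_{a,b}} \nabla u \cdot \nabla v \, dx$ for $v \in H^1_0(\Omega_{a,b})$: split $\nabla u \cdot \nabla v$ into normal and tangential components, integrate by parts along each level set $\pOt$, and apply the second ODE equation, which is precisely the identity needed to equate the result with $\int_{\Omega_{a,b}} F(\cdot,u) v \, dx$.

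\emph{Main obstacle.} The backward direction is where the real work lies. Defining $u$ pointwise from $f$ is trivial, but upgrading this to a Sobolev weak solution requires the full ODE system together with careful bookkeeping of the flow Jacobian and the variation of the unit normal to $\pOt$ along the flow; it is exactly this geometric data that produces the $\sigma_t$, $H_t$, and $L_t$ appearing in \eqref{eqn:ODE}. Rigorously identifying $g$ with $(\partial u/\partial\nu)\circ\varphi_t$ in $\Hp$, and then justifying the slice-by-slice integration by parts at the level of $H^1$ functions (rather than smooth ones), are the main technical hurdles. The forward direction is comparatively clean, but the approximation step must be handled with care because $L_t$ is a second-order tangential operator, so one must check that mollification commutes with the surface operators up to error vanishing in $\Hm$.
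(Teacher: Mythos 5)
Your backward direction matches the paper's almost exactly: define $u$ pointwise from $f$ via the flow, show $u\in H^1(\Omega_{a,b})$ by change of variables, and verify the weak identity by coarea plus slice-by-slice integration by parts and the second ODE equation. The paper packages precisely this integration-by-parts computation as Lemma \ref{lem:weaksol}, which gives
\[
\int_{\Omega_{a,b}} \nabla u \cdot \nabla v =
-\int_a^b \left(\int_{\pO} (v \circ \varphi_t) \left\{L_t f
+ \tfrac{dg}{dt} + \sigma_t H_t g - T_t g \right\} a_t \,d\mu \right) dt,
\]
and this is the engine of both directions.

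Your forward direction, however, takes a genuinely different route. You propose a classical pointwise computation: differentiate $g(t) = (\nabla u \cdot N)\circ\varphi_t$ along the flow, eliminate $u_{\nu\nu}$ via identity \eqref{div:nortan} applied to $Y = \nabla u$, and then approximate a general $H^2_{\loc}$ solution by mollification of $u$ itself. The paper instead never touches $u_{\nu\nu}$ or the variation of the normal: it derives the second ODE equation in the weak formulation, by comparing the identity \eqref{eqn:weaksol} with the coarea expansion of $\int_{\Omega_{a,b}} F(\cdot,u)\,v$. This has the advantage that the same lemma is reused verbatim in the backward direction, so the equivalence is an almost automatic corollary of Lemma \ref{lem:weaksol}. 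Your approach buys a more transparent pointwise picture, but it pays for it: when you differentiate $g$ and split $X = \sigma\nu + \gamma$, a variation-of-normal term $\sigma(\nabla_\nu N)\cdot\nabla^{\pOt}u$ survives, and this is exactly what is needed to convert $-\sigma\dv^{\pOt}(\nabla^{\pOt}u)$ into $-L_t f$. The identity required is
\[
\sigma\,\nabla_\nu N + \nabla^{\pOt}\sigma = 0,
\]
which holds because $N = \sigma\nabla\Psi$ with $\Psi = \sqrt{\psi}$, giving $\nabla_\nu N = \sigma[(\operatorname{Hess}\Psi)\nu]^{\pOt}$ and $\nabla^{\pOt}\sigma = -\sigma^2[(\operatorname{Hess}\Psi)\nu]^{\pOt}$. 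Your sketch attributes the extra $\nabla^{\pOt}\sigma$ term purely to the commutator $\dv^{\pOt}(\sigma\cdot) = \sigma\dv^{\pOt} + \nabla^{\pOt}\sigma\cdot$, but that only explains why $-L_t f$ \emph{contains} that term; it does not explain where the matching term in $dg/dt$ comes from. The identity above is the missing link, and it should be stated and proved. Finally, you do not really justify the $C^1$ regularity in $\cH$ demanded by Definition \ref{def:ODEsol} (including continuity up to $t=T$); the paper handles this by observing that $t\mapsto(u\circ\varphi_t)|_\Omega$ and $t\mapsto\Delta(u\circ\varphi_t)|_\Omega$ are differentiable in $H^1$ and $L^2$ respectively and invoking a trace-differentiability result, and some analogue of this step is needed in your mollify-then-pass-to-the-limit argument as well.

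Also note that the paper's approximation lemma (Proposition \ref{prop:approximation}) mollifies $(f,g)$ on $\pO\times[a,b]$ to make the level-set integration by parts rigorous; you instead mollify $u$ on $\Omega_{a,b}$. Both work, but they are addressing different halves of the argument: the paper's density result is used to justify Lemma \ref{lem:weaksol}, which in turn handles both directions, whereas you reserve mollification for the forward direction alone.
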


\begin{rem}
It follows immediately that a weak solution to \eqref{eqn:PDE} on $\bbR^n \setminus \{0\}$ is equivalent to a solution to \eqref{eqn:ODE} on $(0,\infty)$. Note that both definitions are local, and involve no boundedness or decay assumptions about the behavior of solutions near $t=0$ or $t=\infty$.
\end{rem}

In general we are interested in solutions to \eqref{eqn:PDE} on the ball $\Omega_T$, not $\Omega_{0,T}$. This requires a further assumption on the asymptotic behavior of $f(t)$ and $g(t)$ as $t\to0$, in order to rule out solutions that are singular at a point. An example of such a solution is $u(x) = |x|^{2-n}$, which is harmonic on $\bbR^n \setminus \{0\}$ but is not contained in $H^1_{\rm loc}(\bbR^n)$ on account of its singular behavior at the origin. The following result can therefore be viewed as a kind of removable singularity theorem.

\begin{theorem}\label{thm:global}
If $(f,g)$ solves \eqref{eqn:ODE} on $(0,T)$, and there exists $p \in \big(0, \frac{n}{2} \big)$ such that
\begin{align}\label{fgbound}
	t^{p} \|f(t)\|_{\Hp} + t^{n-p-1} \|g(t)\|_{\Hm}
\end{align}
is bounded near $t=0$, then there exists a weak solution $u$ to \eqref{eqn:PDE} on $\Omega_T$ with $\Tr_t u = (f,g)$ for all $t \in (0,T)$. Conversely, if $u$ is a weak solution to \eqref{eqn:PDE} on $\Omega_T$, then $(f,g) = \Tr_t u$ is a solution of \eqref{eqn:ODE} on $(0,T)$, with
\begin{align}\label{fgdecay}
	t^{n/2-1} \|f(t)\|_{\Hp} + t^{n/2-1} \|g(t)\|_{\Hm} \to 0
\end{align}
as $t \to 0$, provided $n \geq 3$. When $n=2$ we have
\begin{align}\label{fgdecay2D}
	t^{p} \|f(t)\|_{\Hp} + t^{1-p} \|g(t)\|_{\Hm} \to 0
\end{align}
for any $p \in (0,1)$.
\end{theorem}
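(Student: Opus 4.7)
\emph{Overall strategy.}
Both implications of Theorem~\ref{thm:global} build on Theorem~\ref{thm:local}, which already gives the bijection between weak solutions of~\eqref{eqn:PDE} on the punctured ball $\Omega_{0,T}$ and solutions of the ODE~\eqref{eqn:ODE} on $(0,T)$; the extra content of Theorem~\ref{thm:global} is purely the analysis at $t=0$. For the forward direction I would take the weak solution on $\Omega_{0,T}$ furnished by Theorem~\ref{thm:local}, show that the bound~\eqref{fgbound} forces it to lie in $H^1(\Omega_T)$, and then remove the singularity at the origin. For the converse I would restrict a weak solution $u\in H^1(\Omega_T)$ to shrinking dyadic shells $A_t=\Omega_{t/2,2t}$, rescale to a fixed reference annulus via $\varphi_t$, and read off the decay of $f(t)$ and $g(t)$ from trace-theoretic estimates on the rescaled function.

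\emph{Sufficiency.}
The key step is a uniform-in-$a$ bound on $\|u\|_{H^1(\Omega_{a,T})}$. For the $L^2$ part, slicing by the family $\pOt$ and using $dS_{\pOt}\sim t^{n-1}\,dS_{\pO}$ gives $\int_{\Omega_{a,T}}|u|^2\,dx\lesssim\int_a^T t^{n-1-2p}\,dt$, which is finite as $a\to 0$ precisely because of the restriction $p<n/2$. For the Dirichlet energy I would apply Green's identity on $\Omega_{a,T}$, replace $-\Delta u$ by $F(\cdot,u)\in L^2$, and evaluate the inner boundary integral as a duality pairing between $H^{1/2}(\pOt_a)$ and $H^{-1/2}(\pOt_a)$. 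Pulling back by $\varphi_a$, these two Sobolev norms scale as $a^{(n-2)/2}$ and $a^{n/2}$, so the boundary pairing has size $a^{n-1}\|f(a)\|_{\Hp}\|g(a)\|_{\Hm}$; this is exactly $O(1)$ by~\eqref{fgbound}, which explains why the exponents $p$ and $n-p-1$ in the hypothesis must sum to $n-1$. Once $u\in H^1(\Omega_T)$ is established, the weak formulation on $\Omega_T$ follows by a removable-singularity argument: test~\eqref{def:weak} against $v\eta_\varepsilon\in H^1_0(\Omega_{0,T})$ for $v\in H^1_0(\Omega_T)$ and a cutoff $\eta_\varepsilon$ vanishing on $\Omega_\varepsilon$, then use Hardy's inequality for $v$ together with absolute continuity of $\int |\nabla u|^2$ to kill the commutator $\int v\,\nabla u\cdot\nabla\eta_\varepsilon$ as $\varepsilon\to 0$.

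\emph{Necessity and decay.}
For $u\in H^1(\Omega_T)$ a weak solution, Theorem~\ref{thm:local} already gives that $(f,g)=\Tr_t u$ solves~\eqref{eqn:ODE}, so only the rates~\eqref{fgdecay} and~\eqref{fgdecay2D} remain. Rescaling $A_t$ to the fixed reference annulus $A_0$ via $\varphi_t$, the trace theorem applied to the rescaled function produces
\[
\|f(t)\|_{\Hp}^{2}\lesssim t^{-n}\|u\|_{L^2(A_t)}^{2}+t^{2-n}\|\nabla u\|_{L^2(A_t)}^{2},
\]
so multiplying by $t^{n-2}$ reduces the claim $t^{n/2-1}\|f(t)\|_{\Hp}\to 0$ to showing that $t^{-2}\|u\|_{L^2(A_t)}^{2}\to 0$ (the gradient piece is already $o(1)$). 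The latter follows by Poincar\'e on $A_t$ for the mean-zero part of $u$, together with the Sobolev embedding $H^1(\Omega_T)\hookrightarrow L^{2n/(n-2)}(\Omega_T)$ applied to the mean, since the computation $t^{n-2}|\bar u_{A_t}|^2|A_t|\lesssim\|u\|_{L^{2n/(n-2)}(A_t)}^{2}$ tends to zero as a tail integral. The matching bound on $g$ runs the same scheme with the weaker $H^{-1/2}$-trace inequality for normal derivatives of $H^1$ functions whose Laplacian lies in $L^2$, which is available precisely because $F(\cdot,u)\in L^2$; the subtraction of local averages before applying the trace estimate is needed so that the right-hand side vanishes on constant $u$.

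\emph{Main obstacle.}
The principal difficulty is the case $n=2$, which is why only the weaker conclusion~\eqref{fgdecay2D} is claimed there. In two dimensions both the Sobolev embedding $H^1\hookrightarrow L^{2n/(n-2)}$ and the classical Hardy inequality degenerate, so averages $\bar u_{A_t}$ need not remain bounded and test-function cutoffs cannot be taken with $|\nabla\eta_\varepsilon|\lesssim 1/r$ uniformly. I would handle both the removable-singularity step and the $(f,g)$ decay via a logarithmic cutoff $\eta_\varepsilon(x)=\min\{1,\log(|x|/\varepsilon)/\log 2\}$ together with a logarithmically weighted Hardy--Rellich inequality; the relaxation to an arbitrary $p\in(0,1)$ in~\eqref{fgdecay2D} is the precise analytic reflection of the fact that no power weight can distinguish $1$ from $\log r$, exactly as already observed in Remark~\ref{rem:2Dharmonic}.
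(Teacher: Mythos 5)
Your proposal reproduces the skeleton of the paper's proof---punctured-domain equivalence from the local theorem, uniform $H^1$ bound via Green's identity with the inner boundary pairing scaling like $a^{n-1}$, removable singularity at the origin, and rescaled trace estimates for the decay---but takes a genuinely different route through several of the technical steps, most notably the necessity direction. The paper first invokes interior elliptic regularity to promote $u$ to $H^2(\Omega_t)$ and then applies the Sobolev embedding to \emph{both} $u$ and $\nabla u$, combined with H\"older against the volume factor $|\Omega_t| \lesssim t^n$. Your scheme---dyadic shells $A_t$, Poincar\'e with constant $\sim t$ for the mean-zero part, and the \emph{global} Sobolev embedding applied only to the average $\bar u_{A_t}$---obtains the decay of $\|f(t)\|_{\Hp}$ without passing through $H^2$ regularity of $u$, which is a genuine simplification. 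For $g(t)$ you invoke a normal-trace inequality for $H^1$ functions with $L^2$ Laplacian, whereas the paper pairs $g$ against test functions directly through the weak formulation; both hinge on $F(\cdot,u)\in L^2$ and both are sound. On the sufficiency side, the paper dispenses with the boundary term at $\pO_a$ directly, since $\|a_t g(t)\|_{\Hm} \lesssim t^p \to 0$ by hypothesis, rather than via a cutoff-plus-Hardy argument; your cutoff route works for $n\ge3$ but is exactly what forces the logarithmic machinery you foresee for $n=2$. That extra machinery is in fact unnecessary: your own Poincar\'e-plus-mean scheme already covers $n=2$ once you take $q$ large in the H\"older bound on $|\bar u_{A_t}|$, which is essentially what the paper does via $H^2(\Omega_t)\hookrightarrow W^{1,q}(\Omega_t)$ for all finite $q$ with $p=2/q$. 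One small slip: in the display $t^{n-2}|\bar u_{A_t}|^2|A_t|\lesssim\|u\|_{L^{2n/(n-2)}(A_t)}^2$ the prefactor should be $t^{-2}$, not $t^{n-2}$; as written the left-hand side carries an extraneous factor of $|A_t|\sim t^n$.
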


In other words, a weak solution on the punctured ball $\Omega_{0,T}$ can be extended to a weak solution on the entire ball $\Omega_T$ if $(f,g) = \Tr_t u$ satisfies the bound \eqref{fgbound}, in which case it necessarily satisfies the decay condition \eqref{fgdecay}. In the special case that $u$ satisfies a linear differential equation, we obtain the stronger result that $\|f(t)\|_{\Hp}$ and $\|g(t)\|_{\Hm}$ are bounded near $t=0$; see Lemma \ref{lemma:lineardecay}.

In this sense the semilinear elliptic equation \eqref{eqn:PDE} is equivalent to the dynamical system \eqref{eqn:ODE}. This correspondence allows us to apply dynamical systems methods to the study of \eqref{eqn:ODE}. A guide as to what can be achieved with this approach comes from the literature of the area known as spatial dynamics, as discussed in the introduction. There are challenges, however, in applying the techniques of spatial dynamics in our setup. 

Spatial dynamics was initiated by the paper of Kirchg\"assner \cite{K82}. The goal of his paper is to establish the existence of a small amplitude solution of a semilinear elliptic equation on a cylindrical domain, which addresses problems that arise in fluid flow. The strategy is to restrict the dynamical system \eqref{eqn:cylinder} to a center manifold. Even though \eqref{eqn:cylinder} is ill-posed, a center manifold theorem can nevertheless be proved, and a reduction to the center manifold leads to a finite-dimensional system, to which bifurcation theory can be applied. This approach can establish the presence of solutions that bifurcate from the trivial (zero) solution. 

The underlying picture to keep in mind is the dynamics near a fixed point in the infinite-dimensional phase space $\cH$. Although the dynamics is not well-posed in either forward or backward time, the splitting of the spectrum, which is unbounded in both directions, into the right and left half planes can be used to get well-posedness in one time direction on appropriate complementing subspaces. Results have been established in this situation which show that there is a splitting into stable/unstable/center manifolds, see \cite{elbialy12,Gallay93}.

Many generalizations of Kirchg\"assner's work have since appeared, notably the work of Mielke \cite{M86}, who was able to characterize {\em all} small bounded solutions in a center-type manifold. An important advance was made by Peterhof, Sandstede and Scheel \cite{PSS97}, who were the first to consider the behavior near a non-trivial solution. They start with a traveling wave solution and consider nearby solutions specifically in the case of time-dependent forcing. They introduce a new approach in their use of the Lyapunov--Schmidt method as an alternative to the center manifold reduction. A key part of their approach is to establish exponential dichotomies as $x\rightarrow \pm \infty$. These are then used to construct stable and unstable manifolds of the fixed point that represents the traveling wave in the infinite-dimensional phase space. A Melnikov method is finally used to establish when these manifolds intersect. 

At the heart of all these pieces of work is the notion that the underlying dynamical system generates a bi-semigroup; see \cite{elbialy12}. The characterization of the dynamics in terms of invariant manifolds can be cast somewhat generally---see \cite{elbialy12,Gallay93} as well as \cite{PSS97}.

\section{Geometric preliminaries}\label{sec:geo}

\subsection{The vector field}\label{sec:flow}
The family of diffeomorphisms in Lemma \ref{lemma:phit} arises as the flow of a suitably chosen vector field. It is more convenient to construct the flow with respect to the variable $\tau = \log t$. This flow, which we denote $\wtp_\tau$, is generated by an autonomous vector field $\wtX$. To motivate our construction we assume that the flow exists and thus obtain some restrictions on the form of $\wtX$, which we then use to construct it explicitly.

If such a flow exists, the fact that $\wtp_\tau$ maps $\pO = \{x : \psi(x) = 1\}$ to $\pO_{\exp(\tau)} = \{x : \psi(x) = e^{2\tau} \}$ would imply $\psi(\wtp_\tau(x)) = e^{2\tau} \psi(x)$, hence
\begin{align}\label{eqn:X}
	\nabla \psi \cdot \wtX = 2 \psi.
\end{align}
Since $\nabla \psi / |\nabla \psi|$ defines a unit normal along each $\pOt$, we conclude that the normal component of $\wtX$ must have magnitude
\[
	\wtX \cdot \frac{\nabla \psi}{|\nabla \psi|} = \frac{2\psi}{|\nabla \psi|},
\]
and so $\wtX$ must be of the form
\[
	\wtX = 2 \psi \frac{\nabla \psi}{|\nabla \psi|^2} + \textrm{tangential part.}
\]
The system of equations \eqref{eqn:ODE} is simplified by choosing a purely normal flow. However, the normal component of $\wtX$ is in general not differentiable at the origin. Therefore, we must include a tangential component in the vector field $\wtX$ in order to obtain a sufficiently smooth flow.

\begin{lemma}\label{lemma:X}
There exists a $C^{1}$ vector field $\wtX$ satisfying \eqref{eqn:X}, with $\wtX(0) = 0$ and $\nabla \wtX(0) = I$. 
\end{lemma}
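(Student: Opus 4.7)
The plan is to construct $\widetilde{X}$ in two pieces---a canonical singular formula away from the origin and a locally smooth replacement near the origin---then glue them via a cutoff. The underlying equation \eqref{eqn:X} is a single underdetermined scalar equation on $n$ components, so there is ample freedom; what matters is that the freedom be used to regularize the behavior at $0$.

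Off the origin, Hypothesis \ref{hyp:geo}(3) gives $\nabla\psi \ne 0$, so the purely normal field
\[
\widetilde{X}_{\mathrm{out}}(x) = \frac{2\psi(x)}{|\nabla\psi(x)|^{2}}\,\nabla\psi(x)
\]
is $C^{2}$ on $\mathbb{R}^{n}\setminus\{0\}$ and verifies \eqref{eqn:X} by direct inspection. It does not extend to a $C^{1}$ field at $0$, since at a nondegenerate critical point the direction $\nabla\psi/|\nabla\psi|$ has no limit; this is what forces the introduction of a tangential component near the origin.

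Near the origin I would invoke the Morse lemma: because $\psi$ has a nondegenerate minimum at $0$ and $\psi\in C^{3}$, there is a $C^{2}$ diffeomorphism $\Phi$ between neighborhoods $V,U$ of $0$ with $\Phi(0)=0$ and $\psi\circ\Phi(y) = |y|^{2}$. Define $\widetilde{X}_{\mathrm{in}}$ on $U$ as the $\Phi$-pushforward of the Euler field $Y(y)=y$, namely
\[
\widetilde{X}_{\mathrm{in}}(x) = D\Phi(\Phi^{-1}(x))\,\Phi^{-1}(x).
\]
Differentiating the identity $\psi\circ\Phi(y)=|y|^{2}$ yields $D\Phi(y)^{T}\nabla\psi(\Phi(y))=2y$; contracting with $y$ gives $\nabla\psi\cdot\widetilde{X}_{\mathrm{in}}=2|y|^{2}=2\psi$, so \eqref{eqn:X} holds on $U$. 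Since pushforward conjugates linearizations at fixed points, $\widetilde{X}_{\mathrm{in}}(0)=0$ and $\nabla\widetilde{X}_{\mathrm{in}}(0)=D\Phi(0)\cdot I\cdot D\Phi(0)^{-1}=I$, independently of what $D\Phi(0)$ actually is.

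To finish, pick $\varepsilon>0$ so that $\overline{B_{2\varepsilon}(0)}\subset U$ and a cutoff $\chi\in C^{\infty}(\mathbb{R}^{n})$ with $\chi\equiv 1$ on $B_{\varepsilon}(0)$ and $\chi\equiv 0$ off $B_{2\varepsilon}(0)$, and set
\[
\widetilde{X} = \chi\,\widetilde{X}_{\mathrm{in}} + (1-\chi)\,\widetilde{X}_{\mathrm{out}}.
\]
Linearity of \eqref{eqn:X} in $\widetilde{X}$ with common right-hand side forces $\nabla\psi\cdot\widetilde{X}= \chi(2\psi)+(1-\chi)(2\psi)=2\psi$ everywhere; near $0$ we have $\widetilde{X}=\widetilde{X}_{\mathrm{in}}$, delivering the two normalizations at the origin. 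The delicate step---the one I would check most carefully---is the regularity of the Morse chart: under $\psi\in C^{3}$, the standard refinement of the Morse lemma supplies a chart $\Phi\in C^{2}$, which is precisely what is needed so that $\widetilde{X}_{\mathrm{in}}$, and hence $\widetilde{X}$, is genuinely $C^{1}$ through the origin; off the support of $\chi$, $\widetilde{X}$ inherits the $C^{2}$ regularity of $\widetilde{X}_{\mathrm{out}}$.
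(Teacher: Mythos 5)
Your approach shares the paper's core idea — keep the purely normal field away from the origin and force the field to look like the Euler field $x\mapsto x$ near the origin by adding a tangential correction, glued by a cutoff — but you implement the near-origin piece via a Morse chart, whereas the paper writes it down explicitly. That substitution introduces a genuine regularity gap that your proof does not close.

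The issue is precisely the one you flag as ``delicate.'' You need $\Phi\in C^2$, because $\widetilde X_{\mathrm{in}}(x)=D\Phi(\Phi^{-1}(x))\,\Phi^{-1}(x)$ inherits only the regularity of $D\Phi$: if $\Phi\in C^1$ then $D\Phi\in C^0$ and $\widetilde X_{\mathrm{in}}$ is merely continuous, not $C^1$, so the lemma fails. But the Morse lemma loses \emph{two} derivatives in its standard forms (Milnor's inductive completion of the square, or the Moser homotopy method): a $C^k$ function with $k\ge 2$ yields only a $C^{k-2}$ Morse chart. With $\psi\in C^3$ as given by Hypothesis~\ref{hyp:geo}, this produces $\Phi\in C^1$, one derivative short. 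The ``standard refinement'' you invoke to get $C^2$ from $C^3$ is not a result you can safely take for granted; sharper regularity statements for the Morse lemma exist but are nontrivial and not obviously applicable here, and the paper deliberately cites Milnor, where the loss of two derivatives is built in.

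The paper avoids the problem entirely by an explicit formula. It sets $T(x) = x - \dfrac{\langle x,\nabla\psi(x)\rangle}{|\nabla\psi(x)|^2}\nabla\psi(x)$ (the tangential projection of the position field) and $\widetilde X = 2\psi\,\dfrac{\nabla\psi}{|\nabla\psi|^2} + \chi\,T$, so that near $0$
\[
\widetilde X(x) - x = \frac{2\psi(x)-\langle x,\nabla\psi(x)\rangle}{|\nabla\psi(x)|^2}\,\nabla\psi(x),
\]
and the Taylor expansions $\psi(x)=\tfrac12\langle Ax,x\rangle+\cO(|x|^3)$, $\nabla\psi(x)=Ax+\cO(|x|^2)$ with $A=\nabla^2\psi(0)$ positive definite give $\widetilde X(x)-x=\cO(|x|^2)$ directly, with no normal form needed. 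This uses exactly the $C^3$ regularity of $\psi$ and nothing more. Your conceptual picture is right — the field must linearize to the identity at the origin, and your computation that pushforward of the Euler field does so is correct — but the route through the Morse chart requires a regularity hypothesis ($\Phi\in C^2$) that $\psi\in C^3$ does not supply by the standard Morse lemma. To repair the argument you would either need a citable refined Morse lemma with $C^{k-1}$ regularity, or, more in the spirit of the paper, replace the Morse chart by a direct Taylor-expansion estimate on an explicit formula.
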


The vector field $\wtX$ is not uniquely determined---one can add any tangential vector field that is supported away from the origin without changing the above properties. In particular, one can assume that
\[
	\wtX = 2 \psi \frac{\nabla \psi}{|\nabla \psi|^2}
\]
outside an arbitrarily small neighborhood of the origin.

\begin{proof}
Since \eqref{eqn:X} determines the normal component of $\wtX$, we just need to specify the tangential part. For this we take the tangential projection of the vector field $x \mapsto x$. Since the vector field $\nabla \psi / |\nabla \psi|$ is normal to each of the $\pOt$, this projection is given by
\[
	T(x) = x - \frac{\left<x,\nabla \psi(x) \right>}{|\nabla \psi(x)|^2}  \nabla\psi(x).
\]
We then define
\[
	\wtX = 2 \psi \frac{\nabla \psi}{|\nabla \psi|^2} + \chi T,
\]
where $\chi$ is a smooth cut-off function that equals 1 in a small neighborhood of the origin.

Near the origin, where $\chi = 1$, we have
\[
	\wtX(x) = x + \frac{2 \psi(x) - \left<x, \nabla\psi(x) \right>}{|\nabla \psi(x)|^2} \nabla \psi(x).
\]
Since $\psi$ is $C^3$, we can write
\[
	\psi(x) = \frac12 \left<Ax,x\right> + \cO(|x|^3)
\]
and
\[
	\nabla\psi(x) = Ax + \cO(|x|^2)
\]
where the Hessian $A = \nabla^2 \psi(0)$ is positive definite. It follows that
\begin{align}\label{wtX}
	\wtX(x) - x = \frac{2 \psi(x) - \left<x, \nabla\psi(x) \right>}{|\nabla \psi(x)|^2} \nabla \psi(x) = \cO(|x|^2)
\end{align}
for $x$ close to $0$, and so $\wtX$ is differentiable, with $\nabla \wtX(0) = I$.
\end{proof}

Now let $\widetilde{\varphi}_\tau$ denote the flow generated by the vector field $\wtX$ that was constructed in Lemma \ref{lemma:X}. It follows that $\widetilde{\varphi}_\tau$  is defined locally (i.e. for small $\tau$) at each point $x \in \bbR^n$ and is differentiable in $x$. We now prove that this is defined globally.

\begin{lemma}\label{lem:longtime}
The flow $\widetilde\varphi_\tau$ is defined for all $\tau \in \bbR$, and satisfies $\widetilde\varphi_\tau(\Omega) = \Omega_{\exp(\tau)}$.
\end{lemma}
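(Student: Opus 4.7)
The plan is to exploit the defining identity $\nabla\psi \cdot \widetilde X = 2\psi$ from \eqref{eqn:X} in order to get a conserved-quantity-type relation along trajectories, and then use the properness of $\psi$ in Hypothesis \ref{hyp:geo} to extend local existence to global existence.

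First, I would fix $x \in \bbR^n$ and let $(\tau_-,\tau_+)$ denote the maximal interval of existence of the integral curve $\tau \mapsto \widetilde\varphi_\tau(x)$, which is nonempty by the $C^1$ regularity of $\widetilde X$ established in Lemma \ref{lemma:X}. Differentiating $\psi \circ \widetilde\varphi_\tau$ gives
\[
\frac{d}{d\tau}\psi(\widetilde\varphi_\tau(x)) = \nabla\psi(\widetilde\varphi_\tau(x)) \cdot \widetilde X(\widetilde\varphi_\tau(x)) = 2\psi(\widetilde\varphi_\tau(x)),
\]
a scalar linear ODE whose unique solution is
\begin{equation}\label{eq:psievolve}
\psi(\widetilde\varphi_\tau(x)) = e^{2\tau}\psi(x), \qquad \tau \in (\tau_-,\tau_+).
\end{equation}

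Next, I would use \eqref{eq:psievolve} together with properness to rule out finite-time blow-up. On any compact subinterval $[\tau_1,\tau_2] \subset (\tau_-,\tau_+)$, the identity shows that $\psi(\widetilde\varphi_\tau(x)) \leq e^{2\tau_2}\psi(x)$, so the trajectory is contained in the sublevel set $\{y : \psi(y) \leq e^{2\tau_2}\psi(x)\}$, which is compact by Hypothesis \ref{hyp:geo}(4). A symmetric argument controls the backward direction (with the trajectory remaining in $\{y : \psi(y) \leq \psi(x)\}$ for $\tau \leq 0$; note that if $\psi(x)=0$ then $x=0$ is a fixed point since $\widetilde X(0)=0$, so the trajectory is trivially global). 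Thus the standard ODE continuation criterion forces $\tau_+ = +\infty$ and $\tau_- = -\infty$, so $\widetilde\varphi_\tau$ is defined for all $\tau \in \bbR$ and at every $x \in \bbR^n$. Smooth dependence on initial conditions and the group property $\widetilde\varphi_{\tau_1} \circ \widetilde\varphi_{\tau_2} = \widetilde\varphi_{\tau_1+\tau_2}$ follow from standard flow theory, and in particular each $\widetilde\varphi_\tau$ is a diffeomorphism with inverse $\widetilde\varphi_{-\tau}$.

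Finally, the mapping property is a direct consequence of \eqref{eq:psievolve}. If $x \in \Omega = \{\psi < 1\}$, then $\psi(\widetilde\varphi_\tau(x)) = e^{2\tau}\psi(x) < e^{2\tau}$, giving $\widetilde\varphi_\tau(\Omega) \subseteq \Omega_{\exp(\tau)}$. Conversely, if $y \in \Omega_{\exp(\tau)}$, set $x = \widetilde\varphi_{-\tau}(y)$; then $\psi(x) = e^{-2\tau}\psi(y) < 1$, so $x \in \Omega$ and $y = \widetilde\varphi_\tau(x) \in \widetilde\varphi_\tau(\Omega)$. This yields equality $\widetilde\varphi_\tau(\Omega) = \Omega_{\exp(\tau)}$.

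I do not expect a significant obstacle here: the $C^1$ regularity from Lemma \ref{lemma:X} provides local existence, the explicit identity \eqref{eq:psievolve} provides the a priori bound, and properness closes the argument. The only point demanding a little care is the behavior at the origin, which is handled by observing that $\widetilde X(0)=0$ makes $0$ a fixed point and that $\psi(x)>0$ for $x \neq 0$ combined with \eqref{eq:psievolve} prevents any nontrivial trajectory from reaching the origin in finite time.
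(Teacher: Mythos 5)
Your proof is correct and follows essentially the same route as the paper: derive the scalar ODE for $\psi\circ\widetilde\varphi_\tau$ from \eqref{eqn:X}, integrate it to get $\psi(\widetilde\varphi_\tau(x)) = e^{2\tau}\psi(x)$, invoke properness to preclude finite-time escape, and read the mapping property directly off the identity. The extra remarks (the origin as fixed point, both inclusions spelled out) are harmless refinements of the same argument.
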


\begin{proof}
Fix $x \in \bbR^n$ and let $\mathcal J \subset \bbR$ denote the maximal interval of existence for $\widetilde{\varphi}_\tau(x)$. Using \eqref{eqn:X} we compute
\[
	\frac{d}{d\tau} \psi(\widetilde{\varphi}_\tau(x)) = \nabla \psi \cdot \widetilde X = 2 \psi(\widetilde\varphi_\tau(x))
\]
for $\tau \in \mathcal J$. It follows that $\psi(\widetilde\varphi_\tau(x)) = c e^{2\tau}$, with $c = \psi(\widetilde\varphi_0(x)) = \psi(x)$, and so $\psi(\widetilde\varphi_\tau(x)) = \psi(x) e^{2\tau}$. Since $\psi$ is proper, this implies that $\widetilde{\varphi}_\tau(x)$ remains bounded for finite $\tau$, and hence is defined for all $\tau \in \bbR$. Recalling the definition of $\Omega_t$ from \eqref{omegadef}, the equality $\psi(\widetilde\varphi_\tau(x)) = \psi(x) e^{2\tau}$ implies
\[
	\widetilde\varphi_\tau(x) \in \Omega_{\exp(\tau)} \Longleftrightarrow \psi(\widetilde\varphi_\tau(x)) < e^{2\tau} \Longleftrightarrow \psi(x) < 1 \Longleftrightarrow x \in \Omega.
\]
This completes the proof.
\end{proof}

To finish the proof of Lemma \ref{lemma:phit} we simply translate the above results from the variable $\tau$ to $t$.

\begin{proof}[Proof of Lemma \ref{lemma:phit}]
For each $t>0$ define $\varphi_t = \widetilde\varphi_{\log t}$. From Lemma \ref{lem:longtime} we obtain
\[
	\varphi_t(\Omega) = \widetilde\varphi_{\log t}(\Omega) = \Omega_t.
\]
Moreover, for any $t_1,t_2>0$ we have
\[
	\varphi_{t_1} \circ \varphi_{t_2} = \widetilde\varphi_{\log t_1} \circ \widetilde\varphi_{\log t_2} =
	\widetilde\varphi_{\log t_1 + \log t_2} = 
	\widetilde\varphi_{\log (t_1 t_2)} = 
	\varphi_{t_1 t_2}
\]
as claimed.
\end{proof}

We conclude this section by giving an explicit formula for the function $\sigma$ defined in \eqref{sigmadef}. For any $x \in \bbR^n$ we have
\[
	X(\varphi_t(x),t) = \frac{d}{dt}\varphi_t(x) = \frac{d}{dt} \ \widetilde\varphi_{\log t}(x) = t^{-1} \wtX(\varphi_t(x)),
\]
and so $X(x,t) = t^{-1} \wtX(x)$. Using the fact that $\nu_x = \nabla \psi(x) / |\nabla\psi(x)|$ and $t(x) = \sqrt{\psi(x)}$, we obtain
\begin{align}\label{sigmaexplicit}
	\sigma(x) = \frac{1}{t(x)} \wtX(x) \cdot \frac{\nabla \psi(x)}{|\nabla\psi(x)|} = 2 \frac{\sqrt{\psi(x)}}{|\nabla \psi(x)|}.
\end{align}

\subsection{Aymptotics}
We now study the asymptotic behavior of $\varphi_t$ and $D\varphi_t$ as $t \to 0$. This will be used in Section \ref{sec:Sobolev}, where we describe the $t$-dependence of the Sobolev spaces $H^s(\Omega_t)$ and $H^s(\pOt)$.

\begin{lemma}\label{lem:lines}
For each $x \in \overline\Omega$ there exists $\widehat x \in \bbR^n$ such that
\[
	\big| \varphi_t(x) - t \widehat x \big| \leq C t^2
\]
as $t \to 0$, for some constant $C$ that does not depend on $x$. Moreover, if $x \neq 0$, then $\widehat x \neq 0$.
\end{lemma}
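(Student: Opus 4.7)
The plan is to pass to the logarithmic time $\tau = \log t$ and study the rescaled trajectory $z(\tau) := e^{-\tau}\,\wtp_\tau(x) = t^{-1}\varphi_t(x)$. If we can show $z(\tau)\to \widehat x$ with $|z(\tau)-\widehat x| = \cO(e^\tau)$ as $\tau\to-\infty$, then rescaling back immediately yields $|\varphi_t(x)-t\widehat x| = \cO(t^2)$ as $t\to 0$. Since $\wtX(0)=0$ and $\nabla\wtX(0)=I$, writing $\wtX(y) = y + R(y)$, the explicit formula \eqref{wtX} in the proof of Lemma \ref{lemma:X} provides the quadratic bound $|R(y)|\le C|y|^2$ on a fixed neighborhood $U$ of the origin. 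Differentiating $z$ along the ODE $y' = \wtX(y)$ for $y(\tau) := \wtp_\tau(x)$, the linear parts cancel exactly, leaving
\[
z'(\tau) = e^{-\tau} R\bigl(e^\tau z(\tau)\bigr), \qquad |z'(\tau)| \le C\, e^\tau |z(\tau)|^2 \text{ as long as } y(\tau)\in U.
\]

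Next I would invoke the conservation law $\psi(\wtp_\tau(x)) = \psi(x)\, e^{2\tau}$ established in the proof of Lemma \ref{lem:longtime}, combined with the two-sided bound $c|y|^2 \le \psi(y) \le C|y|^2$ near the nondegenerate minimum of $\psi$ at the origin. This forces $|y(\tau)|\le C\sqrt{\psi(x)}\, e^\tau$ once $y(\tau)\in U$, so $|z(\tau)|$ is bounded uniformly in $x\in\overline\Omega$ and $\tau\le\tau_0$, where the entry time $\tau_0$ into $U$ can be chosen uniformly in $x\in\overline\Omega$ because $\psi$ is bounded there. Feeding the uniform bound on $|z|$ back into the ODE gives $|z'(\tau)|\le C' e^\tau$, which is integrable at $-\infty$; thus $z(\tau)$ converges to some limit $\widehat x\in\bbR^n$, and integrating from $-\infty$ yields $|z(\tau)-\widehat x|\le C' e^\tau$. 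Rescaling then gives $|\varphi_t(x)-t\widehat x|\le C' t^2$ for $t$ small, with constant uniform in $x\in\overline\Omega$ (the case $x=0$ is trivial since $\wtp_\tau$ fixes the origin, so $\widehat x = 0$).

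For the final claim that $\widehat x\ne 0$ when $x\ne 0$, I would expand the conservation law using $\psi(y) = \tfrac12\langle Ay,y\rangle + \cO(|y|^3)$ with $A = \nabla^2\psi(0)$ positive definite. Substituting $y(\tau) = e^\tau z(\tau)$ and dividing by $e^{2\tau}$ gives
\[
\tfrac12\langle A z(\tau), z(\tau)\rangle + \cO(e^\tau) = \psi(x).
\]
Sending $\tau\to-\infty$ yields $\tfrac12\langle A\widehat x,\widehat x\rangle = \psi(x)$, and since Hypothesis \ref{hyp:geo} forces $\psi(x)>0$ for $x\ne 0$, positive definiteness of $A$ gives $\widehat x\ne 0$. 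The main technical point throughout is keeping constants uniform in $x$; this is handled by the explicit quadratic estimate on $R$ in a fixed neighborhood of $0$ combined with the compactness of $\overline\Omega$ and the quadratic behavior of $\psi$ near its nondegenerate minimum.
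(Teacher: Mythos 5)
Your proof is correct and follows essentially the same route as the paper: rescale to $z(\tau)=e^{-\tau}\wtp_\tau(x)$, use the conservation law $\psi(\wtp_\tau(x))=\psi(x)e^{2\tau}$ together with $c|y|^2\le\psi(y)$ to get the uniform a priori bound $|\wtp_\tau(x)|\le Ce^\tau$, feed this into the quadratic estimate on $\wtX(y)-y$ from Lemma \ref{lemma:X} to see that $z'$ is integrable at $-\infty$, and integrate. The only real difference is the final step: for $\widehat x\ne 0$ the paper argues by contradiction (if $\widehat x=0$ then $|\wtp_\tau(x)|\lesssim e^{2\tau}$, forcing $\psi(x)=0$), whereas you pass to the limit in the conserved quantity to obtain the cleaner identity $\tfrac12\langle A\widehat x,\widehat x\rangle=\psi(x)$ and invoke positive definiteness of $A$; both are fine, and your version has the small bonus of computing $|\widehat x|_A$ explicitly.
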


\begin{proof}
We start by deriving a uniform bound on $\wtp_\tau(x)$. Since $\psi$ has a nondegenerate minimum at $x=0$, there is a constant $c>0$ so that $\psi(x) \geq c|x|^2$ for all $x \in \overline\Omega$. Using the fact that $\psi(x) \leq 1$ for $x \in \overline\Omega$, we thus obtain
\begin{align}\label{phibound}
	c\big| \wtp_\tau(x) \big|^2 \leq \psi(\wtp_\tau(x)) = e^{2\tau} \psi(x) \leq e^{2\tau}
\end{align}
for any $\tau \leq 0$.

Next, recalling the definition of the flow $\widetilde \varphi_\tau$, we compute
\[
	\frac{d}{d\tau} e^{-\tau} \widetilde \varphi_\tau(x) = e^{-\tau} \left( \wtX(\widetilde\varphi_\tau(x)) - \widetilde\varphi_\tau(x) \right) =: E(x,\tau).
\]
It follows from Lemma \ref{lemma:X} and \eqref{phibound} that
\[
	|E(x,\tau)| \leq C e^{-\tau} \big| \wtp_\tau(x) \big|^2 \leq C' e^\tau
\]
hence $E(x,\cdot)$ is integrable on $(-\infty,0]$. Therefore, using the fact that $\widetilde \varphi_0(x) = x$, we have
\begin{align*}
	x - e^{-\tau} \widetilde \varphi_\tau(x) = \int_\tau^0 E(x,s)\, ds = \int_{-\infty}^0 E(x,s)\, ds - \int_{-\infty}^\tau E(x,s)\, ds,
\end{align*}
and so
\[
	\widetilde \varphi_\tau(x) = e^\tau \left(x - \int_{-\infty}^0 E(x,s)\, ds + \int_{-\infty}^\tau E(x,s)\, ds \right).
\]
The desired asymptotic result follows from setting
\[
	\widehat x = x - \int_{-\infty}^0 E(x,s)\, ds
\]
and then observing that the remaining term satisfies
\[
	\left| \int_{-\infty}^\tau E(x,s)\, ds \right| \leq C e^\tau
\]
because $|E(x,s)| \leq C e^s$ uniformly in $x$.

To complete the proof, suppose that $\widehat x = 0$, and hence $\big| \widetilde \varphi_\tau(x) \big| \leq C e^{2\tau}$.
Since $\psi$ has a critical point at $x=0$, it satisfies $\psi(x) \leq C' |x|^2$ for some positive constant $C'$. As in \eqref{phibound}, we obtain
\[
	\psi(x) e^{2\tau} = \psi(\wtp_\tau(x)) \leq C' \big| \widetilde \varphi_\tau(x) \big|^2.
\]
This implies
\[
	\psi(x) e^{2\tau} \leq C' \big| \widetilde \varphi_\tau(x) \big|^2 \leq C^2 C' e^{4\tau}.
\]
and hence $\psi(x) \leq C^2 C' e^{2\tau}$. Letting $\tau \to -\infty$, we obtain $\psi(x) = 0$, and so $x=0$.
\end{proof}

In other words, the trajectories of the flow are asymptotic to straight lines for small $t$. We now use this to prove that the functions $\{\sigma_t\}$ defined in \eqref{sigmatdef} converge uniformly as $t \to 0$.

\begin{lemma}\label{lem:sigmauniform}
There is a positive function $\sigma_0 \colon \pO \to \bbR$ such that $\sigma_t \to \sigma_0$ uniformly as $t \to 0$.
\end{lemma}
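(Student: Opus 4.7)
The plan is to compute $\sigma_t(y)$ explicitly using the formula \eqref{sigmaexplicit} together with the asymptotics from Lemma \ref{lem:lines}, and then identify the candidate limit $\sigma_0$ and verify uniform convergence by a compactness argument on $\pO$.

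First, I would exploit the key identity $\psi(\varphi_t(y)) = t^2 \psi(y)$ (established in the proof of Lemma \ref{lem:longtime}). Since $\psi(y) = 1$ for $y \in \pO$, this gives $\sqrt{\psi(\varphi_t(y))} = t$, so that \eqref{sigmaexplicit} reduces on $\pO$ to
\begin{equation*}
\sigma_t(y) = \frac{2t}{|\nabla \psi(\varphi_t(y))|}.
\end{equation*}
All that remains is to understand the asymptotics of $|\nabla \psi(\varphi_t(y))|$ as $t \to 0$.

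Next, since $\psi$ is $C^3$ with $\psi(0) = 0$, $\nabla\psi(0) = 0$, and Hessian $A := \nabla^2 \psi(0)$ positive definite, Taylor's theorem gives $\nabla\psi(x) = Ax + O(|x|^2)$ uniformly for $x$ in a neighborhood of $0$. Lemma \ref{lem:lines} provides $\widehat y \in \bbR^n$ with $\varphi_t(y) = t\widehat y + O(t^2)$ uniformly in $y \in \overline\Omega$, and $\widehat y \neq 0$ whenever $y \neq 0$. Combining these expansions,
\begin{equation*}
\nabla \psi(\varphi_t(y)) = A(t \widehat y) + O(t^2) = t\, A\widehat y + O(t^2),
\end{equation*}
uniformly in $y \in \pO$. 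Taking norms and dividing into $2t$ yields the pointwise candidate
\begin{equation*}
\sigma_0(y) := \frac{2}{|A \widehat y|},
\end{equation*}
which is well-defined and strictly positive because $A$ is invertible and $\widehat y \neq 0$ (every $y \in \pO$ satisfies $y \neq 0$).

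The last task is to upgrade pointwise convergence to uniform convergence on $\pO$. For this I would argue that the map $y \mapsto \widehat y$ is continuous on $\overline\Omega$, since the defining formula
\begin{equation*}
\widehat y = y - \int_{-\infty}^0 E(y,s)\, ds
\end{equation*}
from the proof of Lemma \ref{lem:lines} involves an integrand with a uniform exponential majorant $|E(y,s)| \leq C e^s$, so the integral depends continuously on $y$. By compactness of $\pO$, the quantity $|A\widehat y|$ attains a positive minimum, so $|\nabla \psi(\varphi_t(y))|$ is bounded below by a positive constant times $t$ uniformly in $y$ for small $t$. The uniform error $O(t^2)$ in Lemma \ref{lem:lines} then propagates through the formula to give $\sigma_t(y) = \sigma_0(y) + O(t)$ uniformly in $y \in \pO$, completing the proof. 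The main obstacle is simply being careful that every error estimate used (the Taylor remainder for $\nabla \psi$, the approximation in Lemma \ref{lem:lines}, and the lower bound on $|A\widehat y|$) holds uniformly in $y \in \pO$; the continuity of $y \mapsto \widehat y$ and compactness of $\pO$ are what make this work.
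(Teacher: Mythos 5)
Your proof is correct and follows the same overall strategy as the paper's (Taylor expansion of $\psi$ near the origin plus the asymptotics $\varphi_t(y) = t\widehat y + \cO(t^2)$ from Lemma \ref{lem:lines}), but with a worthwhile simplification. You observe that $\psi(\varphi_t(y)) = t^2\psi(y) = t^2$ exactly for $y \in \pO$, so the numerator in \eqref{sigmaexplicit} collapses to $2t$ and no Taylor expansion of $\sqrt{\psi}$ is needed. This gives the cleaner limit $\sigma_0(y) = 2/|A\widehat y|$, whereas the paper keeps the numerator in Taylor-expanded form and arrives at $\sigma_0(y) = \sqrt{2\langle A\widehat y,\widehat y\rangle}/|A\widehat y|$. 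The two agree, because comparing $\psi(\varphi_t(y)) = t^2$ with the expansion $\psi(\varphi_t(y)) = \tfrac{t^2}{2}\langle A\widehat y,\widehat y\rangle + \cO(t^3)$ forces $\langle A\widehat y,\widehat y\rangle = 2$, hence $\sqrt{2\langle A\widehat y,\widehat y\rangle} = 2$. Your explicit compactness/continuity argument for the uniform lower bound on $|A\widehat y|$ is also a welcome addition; the paper asserts uniform convergence from the uniformity of the $\cO(t^2)$ estimate in Lemma \ref{lem:lines} without spelling out that one also needs $|A\widehat y|$ bounded away from zero. (Alternatively, you could bypass the compactness argument entirely: the identity $\langle A\widehat y,\widehat y\rangle = 2$ together with positive definiteness of $A$ gives the uniform lower bound directly.)
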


\begin{proof}
Let $A = \nabla^2 \psi(0)$. For small $x$ we have
\[
	\psi(x) = \frac12\left<Ax,x\right> + \cO(|x|^3)
\]
and
\[
	|\nabla \psi(x)| = |Ax| + \cO(|x|^2).
\]
Combining this with \eqref{sigmaexplicit}, we see that
\[
	\sigma(x) = 2 \frac{\sqrt{\psi(x)}}{|\nabla \psi(x)|} = \frac{\sqrt{2\left<Ax,x\right>}}{|Ax|} + \cO(|x|).
\]
Now let $y \in \pO$. From Lemma \ref{lem:lines} we have $\varphi_t(y) = t \widehat y + \cO(t^2)$ for some nonzero $\widehat y \in \bbR^n$, and so
\[
	\sigma_t(y) = \sigma(\varphi_t(y)) = \frac{\sqrt{2\left<A\widehat y,\widehat y\right>}}{|A\widehat y|} + \cO(t).
\]
We thus define $\sigma_0(y) = \sqrt{2\left<A\widehat y,\widehat y\right>} / |A\widehat y|$. Since the constant $C$ in Lemma \ref{lem:lines} is independent of $x$, we conclude that $\sigma_t \to \sigma_0$ uniformly on $\pO$.
\end{proof}

We next consider the Jacobian matrix $D\varphi_t(x)$ and its determinant.

\begin{lemma}\label{lem:determinant}
There exist positive constants $c_1, c_2$ such that
\[
	c_1 t^n \leq \det(D\varphi_t(x)) \leq c_2 t^n
\]
for all $x \in \overline\Omega$ and sufficiently small $t>0$.
\end{lemma}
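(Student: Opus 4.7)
The plan is to apply the Abel--Liouville formula to the variational equation for the flow $\wtp_\tau$. Since $\varphi_t = \wtp_{\log t}$, it suffices to show $\det D\wtp_\tau(x) \asymp e^{n\tau}$ uniformly for $x \in \overline\Omega$ as $\tau \to -\infty$. Differentiating $\frac{d}{d\tau}\wtp_\tau(x) = \wtX(\wtp_\tau(x))$ in $x$ yields $\frac{d}{d\tau} D\wtp_\tau(x) = D\wtX(\wtp_\tau(x))\, D\wtp_\tau(x)$ with $D\wtp_0(x) = I$, and the Abel--Liouville identity then gives
\[
    \log \det D\wtp_\tau(x) = \int_0^\tau \tr \bigl( D\wtX(\wtp_s(x)) \bigr)\, ds.
\]
Lemma \ref{lemma:X} provides $\nabla \wtX(0) = I$, hence $\tr D\wtX(0) = n$, so the aim is to rewrite the integrand as $n$ plus an integrable remainder.

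The key intermediate step is to upgrade the continuity of $D\wtX$ at $0$ to a Lipschitz-type estimate $|\tr D\wtX(y) - n| \leq C|y|$ in a neighborhood of the origin. Writing $R := \wtX - \mathrm{id}$, the construction in the proof of Lemma \ref{lemma:X} gives $R(y) = p(y)\,|\nabla\psi(y)|^{-2}\, \nabla\psi(y)$ near $0$, where $p(y) = 2\psi(y) - \langle y,\nabla\psi(y)\rangle$. Using $\psi \in C^3$ and Taylor expansion at the origin with $A = \nabla^2\psi(0)$, one obtains $p(y) = \Oh(|y|^3)$, $\nabla p(y) = \Oh(|y|^2)$, $\nabla\psi(y) = Ay + \Oh(|y|^2)$, and $|\nabla\psi(y)|^2 \geq c|y|^2$ from positive-definiteness of $A$. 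A term-by-term application of the product and quotient rules, in which the $\Oh(|y|^{-3})$ singularity produced by differentiating $|\nabla\psi|^{-2}$ is offset by the $\Oh(|y|^3)$ vanishing of $p$, then yields $|DR(y)| \leq C|y|$ for small $y \neq 0$, whence the claimed trace bound.

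Combining this with the uniform decay estimate $|\wtp_s(x)| \leq Ce^s$ for $x \in \overline\Omega$ and $s \leq 0$ already derived in \eqref{phibound}, one obtains $|\tr D\wtX(\wtp_s(x)) - n| \leq C'e^s$, which is integrable on $(-\infty, 0]$ with a bound independent of $x$. Splitting the displayed integral as $n\tau$ plus this bounded remainder and exponentiating produces constants $c_1, c_2 > 0$ such that $c_1 e^{n\tau} \leq \det D\wtp_\tau(x) \leq c_2 e^{n\tau}$ uniformly for $x \in \overline\Omega$ and $\tau \leq 0$. Setting $t = e^\tau$ gives the stated conclusion for small $t > 0$; the case $x = 0$ is automatic since $\wtp_\tau(0) = 0$ and $D\wtp_\tau(0) = e^\tau I$ exactly. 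The main obstacle is the Lipschitz bound $D\wtX(y) - I = \Oh(|y|)$, which is not automatic from the $C^1$ regularity asserted in Lemma \ref{lemma:X} and requires the bookkeeping just sketched; everything else is a routine consequence of the Liouville formula and the decay of trajectories to the origin.
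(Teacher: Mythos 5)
Your proof follows the paper's approach exactly: differentiating the flow equation to obtain the variational equation for $D\wtp_\tau$, applying the Jacobi/Abel--Liouville formula to reduce the problem to estimating $\int_\tau^0 (\nabla\cdot\wtX)(\wtp_s(x))\,ds$, and combining the uniform decay $|\wtp_s(x)| = O(e^s)$ from \eqref{phibound} with a local bound $(\nabla\cdot\wtX)(y) = n + O(|y|)$. Your explicit verification that $D\wtX(y) - I = O(|y|)$ near the origin is actually more careful than the paper's, which cites Lemma~\ref{lemma:X} for this Lipschitz-type bound even though that lemma as stated only asserts $\wtX \in C^1$ with $\nabla\wtX(0) = I$ (i.e.\ an $o(1)$ rather than $O(|y|)$ error, which on its own would not yield an integrable remainder on $(-\infty,0]$).
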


\begin{proof}
Differentiating the flow equation
\[
	\frac{d}{d\tau} \widetilde\varphi_\tau(x) = \widetilde X(\widetilde\varphi_\tau(x))
\]
with respect to $x$, we find that $D \widetilde\varphi_\tau(x)$ satisfies the linear system
\begin{align}\label{eqn:Dphi}
	\frac{d}{d\tau} D \widetilde\varphi_\tau(x) = \big[\nabla \widetilde X (\widetilde\varphi_\tau(x))\big] D \widetilde\varphi_\tau(x).
\end{align}
Using Jacobi's formula we obtain
\begin{align*}
	\frac{d}{d\tau} \log \det(D \widetilde\varphi_\tau(x)) &= \tr\left( D \widetilde\varphi_\tau(x)^{-1} \big[\nabla \widetilde X (\widetilde\varphi_\tau(x))\big] D \widetilde\varphi_\tau(x)\right) \\
	&= (\nabla \cdot \wtX)(\widetilde\varphi_\tau(x)).
\end{align*}
From Lemma \ref{lemma:X} and \eqref{phibound}, the divergence satisfies 
\[
	(\nabla \cdot \wtX)(\widetilde\varphi_\tau(x)) = n + \cO(e^\tau).
\]
Since $\log \det(D \widetilde\varphi_0(x)) = 0$, we find that
\[
	n\tau - C \leq \log \det(D \widetilde\varphi_\tau(x)) \leq n\tau + C
\]
for all $\tau \leq 0$, where $C$ does not depend on $x$. It follows that
\[
	e^{-C} e^{n\tau} \leq \det(D \widetilde\varphi_\tau(x)) \leq e^C e^{n\tau}
\]
uniformly in $x$.
\end{proof}

\begin{lemma}\label{lem:Jacobian}
For each $x \in \overline\Omega$ there exists an invertible matrix $M(x)$ such that
\[
	\big\| D \varphi_t(x) - t M(x) \big\| \leq C t^2
\]
as $t \to 0$, for some constant $C$ that does not depend on $x$. Moreover, $\|M(x)\|$ and $\|M(x)^{-1}\|$ are  bounded above uniformly in $x$.
\end{lemma}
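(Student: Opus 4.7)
The plan is to work in the variable $\tau = \log t$ and study the rescaled matrix $A(\tau,x) := e^{-\tau} D\wtp_\tau(x)$, which is the natural analogue of the quantity $e^{-\tau}\wtp_\tau(x)$ used in the proof of Lemma \ref{lem:lines}. Differentiating the flow equation with respect to $x$ gives \eqref{eqn:Dphi}, so $A$ satisfies the linear ODE
\[
\frac{dA}{d\tau} = R(\tau,x)\,A(\tau,x), \qquad R(\tau,x) := \nabla \wtX(\wtp_\tau(x)) - I,
\]
with initial condition $A(0,x) = I$. The strategy is to pass to the limit $\tau \to -\infty$ to define $M(x)$, quantify the rate of convergence, and translate the result back to the scale of $t$.

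The key analytic input is a quantitative version of the identity $\nabla \wtX(0) = I$: namely
\[
\|\nabla \wtX(y) - I\| \le C|y|
\]
for $y$ near $0$. Granting this, the bound $|\wtp_\tau(x)| \le Ce^\tau$ from \eqref{phibound} yields $\|R(\tau,x)\| \le Ce^\tau$ uniformly for $x \in \overline\Omega$ and $\tau \le 0$. Since $R(\cdot,x)$ is integrable on $(-\infty,0]$, Gronwall's inequality applied to the integral form $A(\tau,x) = I - \int_\tau^0 R(s,x)A(s,x)\,ds$ produces a uniform bound $\|A(\tau,x)\| \le K$. Dominated convergence then guarantees that the limit
\[
M(x) := I - \int_{-\infty}^0 R(s,x)\,A(s,x)\,ds
\]
exists, with $\|A(\tau,x) - M(x)\| = \bigl\|\int_{-\infty}^\tau R(s,x)A(s,x)\,ds\bigr\| \le CKe^\tau$, all constants uniform in $x$. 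Translating back via $t = e^\tau$ gives $\|D\varphi_t(x) - tM(x)\| \le Ct^2$, as required.

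For invertibility and the uniform bound on $\|M(x)^{-1}\|$, I apply Lemma \ref{lem:determinant}: since $\det A(\tau,x) = e^{-n\tau}\det D\wtp_\tau(x)$ lies in $[c_1,c_2]$ for all $\tau \le 0$, passing to the limit yields $c_1 \le |\det M(x)| \le c_2$, so $M(x)$ is invertible; combining with $\|M(x)\| \le K$ and the adjugate formula $M^{-1} = \mathrm{adj}(M)/\det M$ gives a uniform upper bound on $\|M(x)^{-1}\|$ depending only on $K$, $c_1$, and $n$.

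The one step that requires genuine care is the key estimate $\|\nabla \wtX(y) - I\| = O(|y|)$. From \eqref{wtX}, near the origin $\wtX(x) - x = \phi(x)\,\nabla\psi(x)/|\nabla\psi(x)|^2$, where $\phi(x) := 2\psi(x) - \langle x,\nabla\psi(x)\rangle$. Because $\psi \in C^3$ has a nondegenerate minimum at $0$, a direct computation shows $\phi(0) = 0$, $\nabla\phi(0) = 0$, and $\nabla^2\phi(0) = 0$, so $\phi$ vanishes to third order at $0$; combined with the lower bound $|\nabla\psi(x)| \ge c|x|$ from nondegeneracy, differentiating the expression for $\wtX(x) - x$ once then produces the desired $O(|y|)$ bound. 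This is the only place in the argument where the $C^3$ regularity of $\psi$ enters quantitatively, and it is the main step that demands careful bookkeeping; the rest is a routine Gronwall/limit argument paralleling the proof of Lemma \ref{lem:lines}.
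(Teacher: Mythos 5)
Your proposal follows the paper's proof essentially verbatim: the ODE for $e^{-\tau}D\wtp_\tau(x)$, the uniform Gronwall bound, the definition of $M(x)$ as a convergent improper integral with error $O(e^\tau)$, and the use of Lemma \ref{lem:determinant} to get $\det M$ bounded away from zero are all the same steps in the same order. The one thing you add is an explicit justification of the estimate $\|\nabla\wtX(y)-I\|\leq C|y|$, which the paper attributes to Lemma \ref{lemma:X} even though that lemma only asserts $\wtX\in C^1$ with $\nabla\wtX(0)=I$; your observation that $\phi(x)=2\psi(x)-\langle x,\nabla\psi(x)\rangle$ vanishes to third order (using $\psi\in C^3$), combined with $|\nabla\psi(x)|\geq c|x|$, is exactly the computation needed to make that citation rigorous.
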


\begin{proof}
Using \eqref{eqn:Dphi} we find that
\begin{align}
	\frac{d}{d\tau} e^{-\tau} D \widetilde\varphi_\tau(x) = e^{-\tau} \left( \nabla \widetilde X(\widetilde\varphi_\tau(x)) - I \right) D \widetilde\varphi_\tau(x).
\end{align}
Integrating from $\tau$ to 0 and using the fact that $D\widetilde\varphi_0(x) = I$, we obtain
\[
	\big\| e^{-\tau} D \widetilde\varphi_\tau(x) \big\| \leq 1 + \int_\tau^0 \left\| \nabla \widetilde X(\widetilde\varphi_s(x)) - I \right\|
	\big\| e^{-s} D \widetilde\varphi_s(x) \big\| \,ds.
\]
From Lemma \ref{lemma:X} and \eqref{phibound} we have
\[
	\left\| \nabla \widetilde X(\widetilde\varphi_\tau(x)) - I \right\| \leq C e^\tau,
\]
where $C$ does not depend on $x$. It follows from Gronwall's inequality that
\begin{align}\label{eq:Gronwall}
	\big\| e^{-\tau} D \widetilde\varphi_\tau(x) \big\| \leq \exp \left\{ \int_\tau^0 C e^s ds \right\} \leq e^C
\end{align}
for any $\tau \leq 0$.

Now define
\[
	E(x,\tau) = e^{-\tau} \left( \nabla \widetilde X(\widetilde\varphi_\tau(x)) - I \right) D \widetilde\varphi_\tau(x).
\]
It follows from \eqref{eq:Gronwall} that $E(x,\cdot)$ is integrable on $(-\infty,0]$, so we can integrate \eqref{eqn:Dphi} to obtain
\[
	D \widetilde\varphi_\tau(x) = e^\tau \left(I - \int_{-\infty}^0 E(x,s)\, ds + \int_{-\infty}^\tau E(x,s)\, ds\right).
\]
We thus define
\[
	M(x) = I - \int_{-\infty}^0 E(x,s)\, ds.
\]
Bounding the remaining term as in the proof of Lemma \ref{lem:lines}, it follows that $\big\| D \varphi_t(x) - t M(x) \big\| \leq C t^2$. In particular, this implies $t^{-1} D\varphi_t(x) \to M(x)$ as $t \to 0$. From the estimate in Lemma \ref{lem:determinant} we see that $\det(t^{-1} D\varphi_t(x))$ is bounded away from zero, and so the limit $M(x)$ is invertible.
\end{proof}

\subsection{Mean curvature and the first variation of area}
The rate of change of the area of $\pOt$ is related to its mean curvature. The mean curvature of a hypersurface is defined to be the divergence of the outward unit normal, and so for $\pOt$ we have
\begin{align}\label{Hdef}
	H_{\pOt} = \nabla \cdot \left( \frac{\nabla \psi}{|\nabla \psi|} \right).
\end{align}
In the radial case, where $\psi(x) = |x|^2$, one simply has $H_{\pOt} = (n-1)/t$ for all $x \in \pOt$. An overview of mean curvature and level set methods can be found in \cite{CM16}.

To study the $t=0$ limit of \eqref{eqn:ODE}, we must understand the asymptotic behavior of the function $H_t = H_{\pOt} \circ \varphi_t \big|_\pO$. Using the nondegeneracy assumption imposed on $\psi$ in Hypothesis \ref{hyp:geo}, we can control the mean curvature for small $t$.

\begin{lemma}\label{lem:Hlimit}
There is a function $H_0 \colon \pO \to \bbR$ such that $t H_t \to H_0$ uniformly as $t \to 0$.
\end{lemma}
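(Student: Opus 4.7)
The plan is to derive an explicit leading-order expansion of $H_{\pOt}(x)$ as $x \to 0$, then substitute $x = \varphi_t(y)$ and invoke Lemma \ref{lem:lines} to convert the spatial asymptotics into the desired $t \to 0$ asymptotics. Since the mean curvature of $\pOt$ blows up like $1/t$ (as the radial model $\psi(x)=|x|^2$ makes clear), I expect the product $tH_t$ to stabilize to a function depending only on the ``initial direction'' $\widehat y$ produced by Lemma \ref{lem:lines}.

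First, I would expand \eqref{Hdef} into the standard level-set formula
\[
H_{\pOt} = \frac{\Delta \psi}{|\nabla \psi|} - \frac{\langle (\nabla^2 \psi)\nabla \psi,\nabla \psi\rangle}{|\nabla \psi|^3}.
\]
Hypothesis \ref{hyp:geo} gives that $\psi$ is $C^3$ with $A:=\nabla^2\psi(0)$ positive definite, so Taylor expansion yields $\nabla \psi(x) = Ax + O(|x|^2)$, $\nabla^2\psi(x) = A + O(|x|)$, $\Delta\psi(x) = \tr A + O(|x|)$, and $|\nabla\psi(x)| = |Ax| + O(|x|^2)$. Substituting and tracking error terms gives
\[
H_{\pOt}(x) = \frac{1}{|Ax|}\left(\tr A - \frac{\langle A^2 x,Ax\rangle}{|Ax|^2}\right) + O(1)
\]
as $x \to 0$. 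Next, by Lemma \ref{lem:lines}, $\varphi_t(y) = t\widehat y + O(t^2)$ uniformly in $y \in \pO$, with $\widehat y \neq 0$. Plugging $x = \varphi_t(y)$ into the expansion above (noting that $Ax = tA\widehat y + O(t^2)$, $|Ax| = t|A\widehat y| + O(t^2)$, and $\langle A^2 x,Ax\rangle = t^2\langle A^2\widehat y,A\widehat y\rangle + O(t^3)$) produces
\[
tH_t(y) = \frac{\tr A}{|A\widehat y|} - \frac{\langle A^2\widehat y,A\widehat y\rangle}{|A\widehat y|^3} + O(t),
\]
so the natural candidate is $H_0(y) := \tr A/|A\widehat y| - \langle A^2\widehat y,A\widehat y\rangle/|A\widehat y|^3$.

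The main issue is to verify that this convergence is \emph{uniform} on $\pO$, which reduces to showing that $|A\widehat y|$ is bounded above and away from zero as $y$ ranges over $\pO$, and that all $O(\cdot)$ constants appearing above can be chosen independently of $y$. The upper bound on $|\widehat y|$ follows from the explicit representation $\widehat y = y - \int_{-\infty}^0 E(y,s)\,ds$ established in the proof of Lemma \ref{lem:lines}, together with the uniform estimate $|E(y,s)| \leq Ce^s$. For the lower bound, I would use the identity $\psi(\varphi_t(y)) = t^2\psi(y) = t^2$ for $y \in \pO$ (from Lemma \ref{lem:longtime}) and insert $\varphi_t(y) = t\widehat y + O(t^2)$ into the Taylor expansion of $\psi$ to obtain $\langle A\widehat y,\widehat y\rangle = 2$; since $A$ is positive definite, this forces $|\widehat y|$ to lie in a compact interval bounded away from $0$. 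Combined with the uniformity of the $O(t^2)$ term in Lemma \ref{lem:lines} and the boundedness of the $C^3$ data of $\psi$ on $\overline\Omega$, all error terms in the computation become uniform, yielding $tH_t \to H_0$ uniformly on $\pO$.

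The computation itself is routine once the right identity for $H_{\pOt}$ is in hand; the only delicate point is this uniformity, but it is handled cleanly by the ellipsoid identity $\langle A\widehat y,\widehat y\rangle=2$ together with the uniform asymptotics already proved in Section \ref{sec:flow}.
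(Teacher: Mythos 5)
Your proposal is correct and follows essentially the same route as the paper: expand the level-set formula for $H_{\pOt}$, Taylor-expand $\psi$ near the origin using the Hessian $A=\nabla^2\psi(0)$, and substitute the asymptotic $\varphi_t(y)=t\widehat y + O(t^2)$ from Lemma \ref{lem:lines} to identify $H_0(y)=|A\widehat y|^{-1}\big(\tr A - \langle A^2\widehat y, A\widehat y\rangle/|A\widehat y|^2\big)$. The one place you go beyond the paper's write-up is the uniformity discussion: the paper asserts the $O(t)$ error is uniform in $y$ without explicitly ruling out $|A\widehat y|\to 0$, whereas you supply the clean algebraic identity $\langle A\widehat y,\widehat y\rangle = 2$ (from $\psi(\varphi_t(y))=t^2$) that bounds $|\widehat y|$ away from $0$ and above, making the uniformity transparent. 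That is a genuine, if modest, improvement in rigor; otherwise the two proofs coincide.
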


\begin{proof}
Calculating the divergence of $\nabla\psi / |\nabla\psi|$, we find
\[
	H_{\pOt} = \frac{1}{|\nabla \psi|} \left(\Delta \psi  - \nabla^2 \psi \left( \frac{\nabla\psi}{|\nabla\psi|}, \frac{\nabla\psi}{|\nabla\psi|} \right) \right).
\]
Near the origin we have 
\[
	\nabla \psi(x) = Ax + \cO(x^2)
\]
and
\[
	\nabla^2 \psi(x) = A + \cO(x).
\]
It follows that
\[
	H_{\pOt}(x) = \frac{1}{|Ax|} \left( \tr A - \frac{\left<A^2x,Ax\right>}{|Ax|^2} \right) + \cO(1).
\]
Now let $y \in \pO$. From Lemma \ref{lem:lines} we have $\varphi_t(y) = t \widehat y + \cO(t^2)$ for some nonzero $\widehat y \in \bbR^n$, and so
\[
	H_t(y) = H_{\pOt}(\varphi_t(y)) = \frac{1}{t} \frac{1}{|A \widehat y|} \left( \tr A - \frac{\left<A^2\widehat y,A\widehat y\right>}{|A\widehat y|^2} \right) + \cO(1).
\]
We thus define 
\[
	H_0(y) = \frac{1}{|A \widehat y|} \left( \tr A - \frac{\left<A^2\widehat y,A\widehat y\right>}{|A\widehat y|^2} \right),
\]
so that $t H_t(y) = H_0(y) + \cO(t)$, where the error term is uniform in $y$. This completes the proof.
\end{proof}

Next, let $d\mu_t$ and $d\mu$ denote the surface measures on $\pOt$ and $\pO$, respectively, and let $a_t \colon \pO \to \bbR$ denote the Radon--Nikodym derivative of the pulled-back measure $\varphi_t^* d\mu_t$ with respect to $d\mu$, so that $\varphi_t^* d\mu_t = a_t \,d\mu$. By definition, this means
\begin{align}\label{atdef}
	\int_{\pOt} w \, d\mu_t = \int_\pO (w \circ \varphi_t)   a_t  \,d\mu
\end{align}
for any measurable function $w$ on $\pOt$. This can be computed explicitly as the Jacobian determinant $|\det(D \varphi_t^\p)|$, where $\varphi_t^\p \colon \pO \to \pOt$ denotes the restriction of $\varphi_t$ to the boundary of the reference domain.

\begin{lemma}\label{lem:firstvar}
The function $a_t$ satisfies
\[
	\frac{d a_t}{dt} = a_t \left[ \sigma_t H_t + \big(\dv^{\pOt} \gamma\big)\circ \varphi_t \right]
\]
for all $t>0$.
\end{lemma}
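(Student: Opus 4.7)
The identity is the classical first variation of area formula, specialized to the family $\{\pOt\}$ moving with velocity $X(\cdot,t) = \sigma\nu + \gamma$. The plan is to reduce it to a pointwise computation of the time derivative of the pulled-back metric determinant, carried out in local coordinates on $\pO$. Fix $t_0 > 0$ and a point $y \in \pO$, and choose a local chart $(u^1,\ldots,u^{n-1})$ on $\pO$ near $y$, pushed forward via $\varphi_t$ to a chart on $\pOt$ for $t$ near $t_0$. In this chart the induced Euclidean metric on $\pOt$, pulled back to $\pO$, is
\[
g^t_{ij}(u) = \langle \partial_i \varphi_t(u), \partial_j \varphi_t(u)\rangle_{\bbR^n},
\]
and by the change of variables identity \eqref{atdef}, $a_t(u) = \sqrt{\det g^t(u)/\det g^1(u)}$. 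Jacobi's formula then yields
\[
\frac{1}{a_t}\frac{d a_t}{dt} = \frac{1}{2}(g^t)^{ij}\frac{d g^t_{ij}}{dt}.
\]

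The heart of the argument is computing the right-hand side. Differentiating under the inner product and using $\frac{d}{dt}\varphi_t(u) = X(\varphi_t(u),t)$ with the decomposition $X = \sigma\nu + \gamma$, I would invoke three standard facts at the point $\varphi_t(u) \in \pOt$: (i) since each $\partial_j\varphi_t$ is tangent to $\pOt$, the terms involving $\partial_i\sigma$ (which carry a factor of $\nu$) vanish upon contraction with $\partial_j\varphi_t$; (ii) $\langle \nabla_{\partial_i\varphi_t}\nu,\partial_j\varphi_t\rangle$ is the second fundamental form of $\pOt$, and its $g^t$-trace equals $H_{\pOt}$ in the sign convention $H = \dv\nu$ adopted in \eqref{Hdef}; and (iii) the remaining contraction $(g^t)^{ij}\langle \nabla_{\partial_i\varphi_t}\gamma,\partial_j\varphi_t\rangle$ coincides pointwise with $\dv^\pOt\gamma$, provided $\gamma(\cdot,t)$ is tangent to $\pOt$ there.

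Assembling (i)--(iii) gives
\[
(g^t)^{ij}\frac{d g^t_{ij}}{dt} = 2\bigl(\sigma H_{\pOt} + \dv^\pOt \gamma\bigr)\Big|_{\varphi_t(u)},
\]
which, after recognizing $\sigma_t = \sigma\circ\varphi_t$ and $H_t = H_{\pOt}\circ\varphi_t$, is exactly the claimed identity. The only point requiring care is (iii): that $\gamma(\varphi_t(y),t)$ really is tangent to $\pOt$. This follows from the definition preceding \eqref{sigmatdef}: since $t(\varphi_t(y)) = \sqrt{\psi(\varphi_t(y))} = t$, the vector $\nu_{\varphi_t(y)}$ is the outward unit normal to $\pOt$, so $\gamma(x,t) = X(x,t) - \sigma(x)\nu_x \in T_x \pOt$ for every $x \in \pOt$. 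Beyond careful sign-tracking in (ii) and (iii), no genuine obstacle is expected; the lemma is a routine instance of the first variation of area.
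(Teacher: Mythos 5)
Your proposal is correct and is essentially the standard first variation of area computation. The paper itself gives no proof, deferring to Section 1.3 of Colding--Minicozzi \cite{CM11}; that reference carries out precisely the computation you describe, namely representing $a_t$ as the ratio $\sqrt{\det g^t/\det g^1}$ of pulled-back metric determinants, applying Jacobi's formula, and splitting the variation vector field into normal and tangential parts. Your handling of the three constituent facts is sound: the $\partial_i\sigma$ terms vanish against tangent vectors, the $g^t$-trace of $\langle\nabla_{\partial_i\varphi_t}\nu,\partial_j\varphi_t\rangle$ matches $H_{\pOt}=\dv\nu$ with the paper's sign convention \eqref{Hdef}, and the $\gamma$-contraction gives $\dv^{\pOt}\gamma$ because the tangential divergence of a tangential field along $\pOt$ can be computed with the ambient connection. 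The tangency of $\gamma(\cdot,t)$ along $\pOt$, which you flag as the one point needing care, follows exactly as you say from $t(\varphi_t(y)) = t$ for $y\in\pO$, so the decomposition $X=\sigma\nu+\gamma$ applies on the nose. No gaps; this is a clean, self-contained proof of what the paper outsources to a reference.
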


The proof can be found in \cite[Section 1.3]{CM11}. 
Using this, we can describe the asymptotic behavior of the area function $a_t$. This is a more delicate quantity than the total area of $\pOt$, and is quite sensitive to the behavior of the vector field $X$ near the origin.

\begin{lemma}\label{lem:atbound}
There exist positive constants $c_1$ and $c_2$ so that
\begin{align}\label{atbound}
	c_1 t^{n-1} \leq a_t(y) \leq c_2 t^{n-1}
\end{align}
for all $y \in \pO$ and $t>0$ sufficiently small.
\end{lemma}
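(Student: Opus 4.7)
The plan is to extract the asymptotic behavior of $a_t(y)$ directly from the pointwise Jacobian expansion $D\varphi_t(y) = t M(y) + O(t^2)$ supplied by Lemma \ref{lem:Jacobian}. Fix $y \in \pO$ and pick an orthonormal basis $e_1,\ldots,e_{n-1}$ of $T_y \pO$, completed to an orthonormal basis of $\bbR^n$ by appending $e_n = \nu_y$. The surface Jacobian of the restriction $\varphi_t\big|_\pO \colon \pO \to \pOt$ at $y$ can be written as
\[
	a_t(y) = \sqrt{\det G_t(y)}, \qquad G_t(y)_{ij} = \bigl(D\varphi_t(y) e_i\bigr)\cdot \bigl(D\varphi_t(y) e_j\bigr), \quad 1 \leq i,j \leq n-1,
\]
and the Jacobian expansion gives $G_t(y) = t^2 \wti G(y) + t^3 E_t(y)$ with $\|E_t(y)\|$ bounded uniformly in $y$ and in small $t$, where $\wti G(y)_{ij} = (M(y) e_i)\cdot(M(y) e_j)$. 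Factoring out $t^2$ yields $\det G_t(y) = t^{2(n-1)} \det\bigl(\wti G(y) + t E_t(y)\bigr)$, so the problem reduces to proving uniform two-sided bounds on $\det \wti G(y)$; the lower bound then extends by continuity to $\wti G + tE_t$ for small $t$, giving $a_t(y) = t^{n-1} \sqrt{\det \wti G(y)}\,(1+O(t))$ uniformly in $y$.

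The upper bound on $\det \wti G(y)$ is immediate from $\|M(y)\| \leq C$ (Lemma \ref{lem:Jacobian}): each entry is bounded by $C^2$, hence $\det \wti G \leq C'$ uniformly. The lower bound is the main subtlety, and I would obtain it by coupling $\wti G(y)$ to the full $n \times n$ determinant of $M(y)$. Since $\{e_i\}_{i=1}^n$ is orthonormal,
\[
	|\det M(y)| = \bigl|\det\bigl(M(y)e_1,\ldots,M(y)e_n\bigr)\bigr| \leq \sqrt{\det \wti G(y)}\cdot \bigl|M(y) e_n\bigr| \leq \sqrt{\det \wti G(y)}\cdot \|M(y)\|,
\]
using the geometric interpretation of the determinant as base $(n-1)$-volume times the height of the final vector above the base. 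Combined with $\|M(y)\| \leq C$ this yields $\sqrt{\det \wti G(y)} \geq c\,|\det M(y)|$.

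It remains only to check that $\det M(y)$ is uniformly positive. The expansion $D\varphi_t(y) = tM(y) + O(t^2)$ implies $\det D\varphi_t(y) = t^n \det M(y) + O(t^{n+1})$, so dividing by $t^n$ and letting $t \to 0^+$ while invoking Lemma \ref{lem:determinant} produces $\det M(y) \geq c_1 > 0$ uniformly in $y \in \pO$. The hardest conceptual step is indeed this lower bound on $\det \wti G$: a priori the restriction of $M(y)$ to $T_y\pO$ could collapse even though $M(y)$ itself is invertible, and the determinantal identity above is precisely what rules this degeneracy out by tying the $(n-1)$-dimensional volume of the tangential image to the full Jacobian controlled by Lemma \ref{lem:determinant}.
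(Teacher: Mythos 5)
Your proof is correct but takes a genuinely different route from the paper. The paper derives a differential inequality for $\log a_t$: it invokes Lemma~\ref{lem:firstvar} (first variation of area), rewrites the coefficient as $\left(\dv X - \nu\cdot\nabla_\nu X\right)\circ\varphi_t$, expands $\nabla\wtX = I + \cO(|x|)$ from Lemma~\ref{lemma:X} to get $\frac{d}{dt}\log a_t = \frac{n-1}{t} + \cO(1)$ uniformly in $y$, and integrates. Your argument instead works directly with the surface Gram determinant, expanding $D\varphi_t = tM + \cO(t^2)$ via Lemma~\ref{lem:Jacobian} and then controlling $\det\wti G(y)$ from below via the Hadamard-type bound $|\det M(y)| \leq \sqrt{\det\wti G(y)}\,\|M(y)\|$, with Lemma~\ref{lem:determinant} supplying $\det M(y) \geq c_1 > 0$ uniformly. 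Both arguments are ultimately powered by $\nabla\wtX(0) = I$, but your approach is algebraic where the paper's is infinitesimal: the paper sidesteps all determinant estimates by passing to the logarithmic derivative (a linear quantity), at the cost of requiring the first-variation formula, while your route needs the Hadamard trick to prevent the tangential restriction of $M(y)$ from degenerating, but in exchange yields the strictly stronger conclusion that $a_t(y)/t^{n-1}$ actually converges uniformly to $\sqrt{\det\wti G(y)}$ rather than merely being pinned between two constants.
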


\begin{proof}
Writing $X = \sigma \nu + \gamma$ and using \eqref{div:nortan}, we obtain
\[
	(\dv X )\big|_{\pOt} = \dv^{\pOt} \gamma + \sigma H_{\pOt} + \nu \cdot \nabla_\nu X,
\]
and hence
\begin{align*}
	\sigma_t H_t + \big(\dv^{\pOt} \gamma\big)\circ \varphi_t &= \left( \sigma H_{\pOt} + \dv^{\pOt} \gamma\right) \circ \varphi_t \\
	&= \left(\dv X - \nu \cdot \nabla_\nu X \right) \circ \varphi_t.
\end{align*}
From Lemma \ref{lemma:X} we have $\nabla \wtX = I + \cO(|x|)$. This implies $\dv \wtX = n + \cO(|x|)$ and $\nu \cdot \nabla_\nu \wtX = 1 + \cO(|x|)$, hence $\dv \wtX - \nu \cdot \nabla_\nu \wtX = (n-1) + \cO(|x|)$. Since $X(x,t) = t^{-1} \wtX(x)$, we obtain
\[
	\sigma_t H_t + \big(\dv^{\pOt} \gamma\big)\circ \varphi_t = \frac{n-1}{t} + \frac{\cO(|\varphi_t(y)|)}{t}
	= \frac{n-1}{t} + \cO(1),
\]
using Lemma \ref{lem:lines} to bound $\varphi_t(y)$ for $y \in \pO$. Therefore
\[
	\frac{n-1}{t} - C \leq \frac{1}{a_t} \frac{d a_t}{dt} \leq \frac{n-1}{t} + C
\]
uniformly on $\pO$, and the result follows.
\end{proof}

\subsection{The coarea formula}
When relating a function $u$ and its boundary data $f(t)$ and $g(t)$, we will make frequent use of the coarea formula. This allows us to relate the integral of $u$ over a given domain to the integrals of $u$ over the level sets of a sufficiently smooth function. It can be viewed as a generalization to the nonradial case of the standard formula for integration in polar coordinates.

Suppose $\Psi \colon \bbR^n \to \bbR$ is smooth. Sard's theorem implies that for almost every $t\in\bbR$, the level set $\Psi^{-1}(t)$ is a smooth hypersurface. Let $d\mu_t$ denote the induced measure on $\Psi^{-1}(t)$. Defining the region $\Omega_{a,b} = \{a < \Psi(x) < b\}$, the coarea formula says that
\[
	\int_{\Omega_{a,b}} w |\nabla \Psi| = \int_a^b \left( \int_{\Psi^{-1}(t)} w\, d\mu_t \right) dt
\]
for any measurable function $w$ that is either nonnegative or integrable \cite{C06}. In fact, if $d\mu_t$ is suitably interpreted, one only requires the function $\Psi$ to be Lipschitz; see \cite{EG92} for a general version of this result. If $w / |\nabla \Psi|$ is nonnegative or integrable, we have
\[
	\int_{\Omega_{a,b}} w = \int_a^b \left( \int_{\Psi^{-1}(t)} \frac{w}{|\nabla\Psi|} \, d\mu_t \right) dt.
\]

To relate this to the domain $\Omega_{a,b} = \{a^2 < \psi(x) < b^2\}$ defined in \eqref{annulus}, we let $\Psi = \sqrt{\psi}$ and calculate $\nabla \Psi = \nabla \psi / (2 \sqrt{\psi})$. Comparing with \eqref{sigmaexplicit}, we have $|\nabla \Psi| = \sigma^{-1}$, and so the coarea formula yields
\[
	\int_{\Omega_{a,b}} w = \int_a^b \left( \int_{\pOt} \sigma w\, d\mu_t \right) dt.
\]
Finally, using the fact that $\sigma_t = \sigma \circ \varphi_t\big|_\pO$ and recalling the definition of $a_t$ from \eqref{atdef}, we obtain
\begin{align}\label{coarea}
	\int_{\Omega_{a,b}} w = \int_a^b \left( \int_{\pO} \sigma_t  (w \circ \varphi_t) a_t \,d\mu \right) dt.
\end{align}
Note that all of the integrals on the right-hand side are computed on the fixed hypersurface $\pO$.

\subsection{Scaling of Sobolev norms}\label{sec:Sobolev}
The diffeomorphisms $\varphi_t \colon \Omega \to \Omega_t$ induce maps $H^s(\Omega_t) \to H^s(\Omega)$ and $H^s(\pOt) \to H^s(\pO)$ via the pullback, $u \mapsto u \circ \varphi_t$. To prove Theorem \ref{thm:global} we will need estimates on the norms of these maps for small $t$.

\begin{lemma}\label{lem:Sobolev}
There exist constants $c_1$ and $c_2$ such that the following estimates hold for small $t$:
\[
	c_1 t^{n/2} \|u \circ \varphi_t\|_{L^2(\Omega)}  \leq \|u \|_{L^2(\Omega_t)} \leq c_2 t^{n/2} \|u \circ \varphi_t \|_{L^2(\Omega)}
\]
for all $u \in L^2(\Omega_t)$,
\[
	c_1  t^{n/2}  \|u \circ \varphi_t \|_{H^1(\Omega)} \leq \|u\|_{H^1(\Omega_t)} \leq c_2  t^{n/2 - 1}  \|u \circ \varphi_t \|_{H^1(\Omega)}
\]
for all $u \in H^1(\Omega_t)$,
\[
	c_1 t^{(n-1)/2} \| f \|_{L^2(\pO)}  \leq \left\| f \circ \varphi_t^{-1} \right\|_{L^2(\pOt)} \leq c_2 t^{(n-1)/2} \| f \|_{L^2(\pO)}
\]
for all $f \in L^2(\pO)$, and
\[
	c_1 t^{(n-1)/2} \left\| f \right\|_{H^1(\pO)} \leq \left\| f \circ \varphi_t^{-1} \right\|_{H^1(\pOt)} \leq c_2 t^{(n-3)/2} \left\| f \right\|_{H^1(\pO)}
\]
for all $f \in H^1(\pO)$.
\end{lemma}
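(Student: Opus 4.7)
The plan is to treat all four inequalities as consequences of the change-of-variables formula, using the precise Jacobian/volume-form asymptotics established in Lemmas~\ref{lem:determinant}, \ref{lem:Jacobian}, and \ref{lem:atbound}. All four estimates reduce to combining three facts: (i) $|\det D\varphi_t| \sim t^n$ on $\Omega$ and the surface density $a_t \sim t^{n-1}$ on $\partial\Omega$; (ii) $\|D\varphi_t\| \leq C t$ since $D\varphi_t = tM + \cO(t^2)$ with $\|M\|$ uniformly bounded; and (iii) $\|D\varphi_t^{-1}\| \leq C t^{-1}$ since $\|M^{-1}\|$ is also uniformly bounded.

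For the interior $L^2$ bound I would just write $\|u\|_{L^2(\Omega_t)}^2 = \int_\Omega |u\circ\varphi_t|^2 |\det D\varphi_t|\,dy$ and apply Lemma~\ref{lem:determinant}. For the $H^1$ bound the chain rule gives $(\nabla u)\circ\varphi_t = (D\varphi_t)^{-T} \nabla(u\circ\varphi_t)$, so changing variables yields
\[
\|\nabla u\|_{L^2(\Omega_t)}^2 = \int_\Omega \bigl|(D\varphi_t)^{-T}\nabla(u\circ\varphi_t)\bigr|^2 |\det D\varphi_t|\,dy,
\]
and the Jacobian estimates give the upper bound $\leq C t^{n-2} \|\nabla(u\circ\varphi_t)\|_{L^2(\Omega)}^2$. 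Combined with the $L^2$ bound and the fact that $t^{n/2}\le t^{n/2-1}$ for small $t$, this produces the asserted $t^{n/2-1}$ upper constant. For the lower $H^1$ bound I would use $\nabla(u\circ\varphi_t) = (D\varphi_t)^T (\nabla u)\circ\varphi_t$ and change variables \emph{backwards} to obtain $\|\nabla(u\circ\varphi_t)\|_{L^2(\Omega)}^2 \leq C t^{2-n}\|\nabla u\|_{L^2(\Omega_t)}^2$; adding the $L^2$ bound and using $t\leq 1$ to absorb $t^{2-n}$ into $t^{-n}$ gives $\|u\circ\varphi_t\|_{H^1(\Omega)} \leq C t^{-n/2}\|u\|_{H^1(\Omega_t)}$.

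The two boundary estimates are handled analogously, but using $\varphi_t^\ast d\mu_t = a_t\, d\mu$ from \eqref{atdef} in place of $|\det D\varphi_t|\,dy$. The $L^2(\partial\Omega_t)$ estimate is then immediate from Lemma~\ref{lem:atbound}. For $H^1(\partial\Omega_t)$, which I expect to be the main technical point, I would work with the intrinsic tangential gradient and the restricted derivative $D\varphi_t^\partial\colon T_y\partial\Omega \to T_{\varphi_t(y)}\partial\Omega_t$. One needs $\|D\varphi_t^\partial\|\sim t$ and $\|(D\varphi_t^\partial)^{-1}\|\sim t^{-1}$. The upper bound follows directly from Lemma~\ref{lem:Jacobian} by restriction; for the lower bound, the cleanest argument is that the singular values of $D\varphi_t^\partial$ have product $|\det D\varphi_t^\partial| = a_t\sim t^{n-1}$ and are each bounded above by $\|D\varphi_t^\partial\|\leq Ct$, so the smallest singular value is at least $c\, t^{n-1}/t^{n-2} = ct$. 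Then the chain rule $\nabla^{\partial\Omega_t}(f\circ\varphi_t^{-1}) = (D\varphi_t^\partial)^{-T}\nabla^{\partial\Omega} f$, combined with $a_t\sim t^{n-1}$, gives
\[
\bigl\|\nabla^{\partial\Omega_t}(f\circ\varphi_t^{-1})\bigr\|_{L^2(\partial\Omega_t)}^2 \;\sim\; t^{n-3}\,\|\nabla^{\partial\Omega} f\|_{L^2(\partial\Omega)}^2.
\]
Adding the $L^2$-part yields $\|f\circ\varphi_t^{-1}\|_{H^1(\partial\Omega_t)}^2 \sim t^{n-1}\|f\|_{L^2(\partial\Omega)}^2 + t^{n-3}\|\nabla^{\partial\Omega} f\|_{L^2(\partial\Omega)}^2$. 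For the upper bound I would estimate this by $t^{n-3}\|f\|_{H^1(\partial\Omega)}^2$ (using $t\leq1$), and for the lower bound I would estimate it from below by $t^{n-1}\|f\|_{H^1(\partial\Omega)}^2$, giving the stated constants $t^{(n-3)/2}$ and $t^{(n-1)/2}$, respectively.

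The main obstacle is the boundary $H^1$ step: verifying that the restricted map $D\varphi_t^\partial$ really does satisfy both $\|D\varphi_t^\partial\|\leq Ct$ and $\|(D\varphi_t^\partial)^{-1}\|\leq Ct^{-1}$. The upper norm bound is immediate, but the lower one requires a short singular-value argument tying together Lemmas~\ref{lem:Jacobian} and \ref{lem:atbound} (or alternatively, restricting $M(x)$ to the relevant tangent subspaces and using its uniform invertibility). Once this tangential Jacobian estimate is in hand, everything else is a routine change of variables.
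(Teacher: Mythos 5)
Your proof is correct, and for the interior $L^2$ and $H^1$ estimates and the boundary $L^2$ estimate it is essentially the paper's argument: change variables using Lemma~\ref{lem:determinant} for the volume Jacobian, Lemma~\ref{lem:atbound} for the surface density, and the pointwise gradient bound coming from Lemma~\ref{lem:Jacobian}. For the boundary $H^1$ estimate, however, you take a genuinely different route. The paper works with ambient quantities: it picks a zero-normal-derivative extension $\hat f$ of $f$ so that $\nabla^{\pO}f = \nabla\hat f$ on $\pO$ (and, for the reverse inequality, an extension whose normal derivative vanishes on $\pOt$), then applies the ambient estimate~\eqref{eq:gradbound} together with the elementary fact that tangential projection is nonexpansive. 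You instead work intrinsically with the restricted map $D\varphi_t^\partial \colon T_y\pO \to T_{\varphi_t(y)}\pOt$, obtaining $\sigma_{\max}(D\varphi_t^\partial)\le Ct$ by restriction of Lemma~\ref{lem:Jacobian}, and $\sigma_{\min}(D\varphi_t^\partial)\ge ct$ via the singular-value/determinant trick: the product of the $n-1$ singular values equals $|\det D\varphi_t^\partial| = a_t \gtrsim t^{n-1}$ while each factor is $\le Ct$, so the smallest is $\gtrsim t$. Both arguments use the same inputs (Lemmas~\ref{lem:Jacobian} and~\ref{lem:atbound}) and are of comparable length; the paper's extension device avoids ever manipulating $D\varphi_t^\partial$ directly, whereas your singular-value argument exposes more clearly why the tangential Jacobian is a uniformly bi-Lipschitz contraction of factor $t$ and yields the two-sided bound on $\|(D\varphi_t^\partial)^{\pm 1}\|$ simultaneously rather than via two separately tailored extensions.
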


More precisely, for any $T>0$ there exist constants $c_1(T)$ and $c_2(T)$ such that the above estimates hold for all $t \in (0,T]$.

\begin{proof}
For the $L^2(\Omega_t)$ estimate we compute
\begin{align*}
	\int_{\Omega_t} u^2 
	 &= \int_\Omega (u \circ \varphi_t)^2 \det (D \varphi_t) 
\end{align*}
and then apply Lemma \ref{lem:determinant}. The $L^2(\pO)$ estimate is obtained similarly, writing
\[
	\int_{\pOt} \big(f \circ \varphi_t^{-1}\big)^2 d\mu_t = \int_{\pO} f^2 a_t \,d\mu
\]
and then using \eqref{atbound}.

For the $H^1(\Omega_t)$ estimate we first compute $\nabla(u \circ \varphi_t) = (D \varphi_t)^T (\nabla u) \circ \varphi_t$. It follows from Lemma~\ref{lem:Jacobian} that
\begin{align}\label{eq:gradbound}
	c_1 t \big|(\nabla u) \circ \varphi_t \big| \leq |\nabla(u \circ \varphi_t)| \leq c_2 t \big|(\nabla u) \circ \varphi_t \big|
\end{align}
and so the norm of the gradient
\[
	\| \nabla u \|_{L^2(\Omega_t)}^2 = \int_{\Omega_t} |\nabla u|^2
	= \int_\Omega \left |(\nabla u) \circ \varphi_t \right|^2 \det (D \varphi_t)
\]
satisfies the estimate
\begin{align}
	c_1 t^{n-2} \| \nabla(u \circ \varphi_t)\|^2_{L^2(\Omega)} \leq \| \nabla u \|_{L^2(\Omega_t)}^2 \leq c_2 t^{n-2} \| \nabla(u \circ \varphi_t)\|^2_{L^2(\Omega)}.
\end{align}
Combining this with the $L^2(\Omega_t)$ estimate, we have
\begin{align*}
	\|u\|_{H^1(\Omega_t)}^2 &= \|u\|_{L^2(\Omega_t)}^2 + \|\nabla u\|_{L^2(\Omega_t)}^2 \\
	&\leq c_2 \left(t^n \|u \circ \varphi_t \|^2_{L^2(\Omega)} + t^{n-2} \|\nabla(u \circ \varphi_t) \|^2_{L^2(\Omega)} \right) \\ 
	&\leq c_2 t^{n-2} \left(\|u \circ \varphi_t \|^2_{L^2(\Omega)} + \|\nabla(u \circ \varphi_t) \|^2_{L^2(\Omega)} \right) \\ 
	&= c_2 t^{n-2} \|u \circ \varphi_t \|^2_{H^1(\Omega)}
\end{align*}
and
\begin{align*}
	\|u\|_{H^1(\Omega_t)}^2  &\geq c_1 \left(t^n \|u \circ \varphi_t \|^2_{L^2(\Omega)} + t^{n-2} \|\nabla(u \circ \varphi_t) \|^2_{L^2(\Omega)} \right) \\ 
	&\geq c_1 t^{n} \left(\|u \circ \varphi_t \|^2_{L^2(\Omega)} + \|\nabla(u \circ \varphi_t) \|^2_{L^2(\Omega)} \right) \\ 
	&= c_1 t^{n} \|u \circ \varphi_t \|^2_{H^1(\Omega)}
\end{align*}
as desired.

Finally, for the $H^1(\pO)$ estimate, we recall that the tangential gradient $\nabla^{\pO} f$ is given by $\nabla^{\pO} f = \nabla \hat f - (\p \hat f/\p\nu) \nu$, where $\hat f$ is any extension of $f$ to a neighborhood of $\pO$. Choosing an extension $\hat f$ with $\p \hat f/\p\nu = 0$, we use \eqref{eq:gradbound} to compute
\begin{align*}
	\left\| \nabla^{\pOt} \big(f \circ \varphi_t^{-1}\big)\right\|_{L^2(\pOt)}^2 &= \int_{\pOt} \big| \nabla^{\pOt} \big(f \circ \varphi_t^{-1}\big)\big|^2 d\mu_t \\
	&\leq \int_{\pOt} \big| \nabla \big(\hat f \circ \varphi_t^{-1}\big)\big|^2 d\mu_t \\
	&\leq \frac{C}{t^2} \int_{\pOt} \big| (\nabla \hat f) \circ \varphi_t^{-1}\big|^2 d\mu_t \\
	&= \frac{C}{t^2} \int_{\pO} \big| \nabla \hat f \big|^2 a_t\, d\mu \\
	&\leq C t^{n-3} \int_{\pO} \big| \nabla \hat f \big|^2 \, d\mu \\
	&= C t^{n-3} \| \nabla^{\pO} f \|_{L^2(\pO)}^2
\end{align*}
where in the last line we have used the fact that $\nabla^{\pO} f = \nabla \hat f$ for this particular choice of $\hat f$. Similarly, choosing an extension $\hat f$ of $f$ so that $\nabla^{\pOt} \big(f \circ \varphi_t^{-1}\big) = \nabla \big( \hat f \circ \varphi_t^{-1}\big)$ on $\pOt$, we find that
\begin{align*}
	\left\| \nabla^{\pOt} \big(f \circ \varphi_t^{-1}\big)\right\|_{L^2(\pOt)}^2 \geq
	C t^{n-3} \| \nabla^{\pO} f \|_{L^2(\pO)}^2
\end{align*}
for some different constant $C$. Combining this with the already obtained estimate for the $L^2(\pO)$ norm, the result follows.
\end{proof}

\section{Evolution of the boundary data}\label{sec:evolution}
In this section we prove Theorems \ref{thm:local} and \ref{thm:global}, which say that the partial differential equation \eqref{eqn:PDE} is equivalent to the system of ordinary differential equations \eqref{eqn:ODE} for the boundary data.  Aside from issues of regularity, the proof of Theorem \ref{thm:local} consists of direct computations using integration by parts and the coarea formula \eqref{coarea}. The proof of Theorem \ref{thm:global}, on the other hand, is more involved, and requires a detailed understanding of the geometry of the level sets $\pOt$ as $t \to 0$.

\subsection{An approximation argument}
To prove the second statement in Theorem \ref{thm:local} we need to reconstruct the function $u \in H^1_{\rm loc}(\Omega_{0,T})$ from its Cauchy data $(f(t),g(t))$ for $0 < t < T$. This is made possible by the following result.

\begin{prop}\label{prop:approximation}
Suppose $\Omega \subset \bbR^n$ is a bounded domain, with Lipschitz boundary $\pO$. Then $C^1([a,b], C^1(\pO))$ is dense in $C^0([a,b], H^1(\pO)) \cap C^1([a,b], L^2(\pO))$.
\end{prop}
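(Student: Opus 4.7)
The plan is to approximate $f$ by applying a single spatial mollification pointwise in $t$. No regularization in the $t$-variable is needed: since $f$ is already $C^1$ in $t$ with values in $L^2(\partial\Omega)$, applying a sufficiently smoothing operator on $\partial\Omega$ automatically promotes this to differentiability in $t$ with values in $C^1(\partial\Omega)$.

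The first step is to construct a one-parameter family $\{S_\delta\}_{\delta > 0}$ of bounded linear operators on $L^2(\partial\Omega)$ enjoying three properties: (i) $\|S_\delta\|_{L^2 \to L^2} + \|S_\delta\|_{H^1 \to H^1} \leq C$ uniformly in $\delta$; (ii) for each fixed $\delta > 0$, $S_\delta$ maps $L^2(\partial\Omega)$ continuously into $C^1(\partial\Omega)$; (iii) $S_\delta h \to h$ in $L^2$ for every $h \in L^2(\partial\Omega)$ and in $H^1$ for every $h \in H^1(\partial\Omega)$. The standard construction suffices: fix a finite covering of $\partial\Omega$ by bi-Lipschitz coordinate patches, choose a smooth partition of unity $\{\chi_i\}$ subordinate to this cover, and in each patch convolve $\chi_i f$ (pushed forward to the Euclidean chart and extended by zero) with a standard mollifier $\rho_\delta$ before pulling back and summing.

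Given such a family, define $f_\delta(t) := S_\delta f(t)$. Property (ii) together with continuity of $f$ into $L^2$ yields $f_\delta \in C^0([a,b], C^1(\partial\Omega))$. For differentiability in $t$, since $f \in C^1([a,b], L^2)$ one may write $f(t+h) - f(t) = \int_t^{t+h} \partial_s f(s)\,ds$ as a Bochner integral in $L^2$; applying $S_\delta$ and using (ii) converts this to
\[
 f_\delta(t+h) - f_\delta(t) = \int_t^{t+h} S_\delta \partial_s f(s)\,ds
\]
as a Bochner integral in $C^1(\partial\Omega)$, so dividing by $h$ and letting $h \to 0$ gives $\partial_t f_\delta(t) = S_\delta \partial_t f(t)$, which lies in $C^0([a,b], C^1(\partial\Omega))$. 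Hence $f_\delta \in C^1([a,b], C^1(\partial\Omega))$.

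Uniform convergence as $\delta \to 0$ then follows from compactness. The sets $K_1 = \{f(t) : t \in [a,b]\} \subset H^1(\partial\Omega)$ and $K_2 = \{\partial_t f(t) : t \in [a,b]\} \subset L^2(\partial\Omega)$ are continuous images of a compact interval, hence compact. A standard $\varepsilon$-net argument, using the uniform bound (i) and the pointwise strong convergence (iii), shows that $S_\delta \to I$ uniformly on any compact subset. Applied to $K_1$ and $K_2$, this yields
\[
 \sup_{t \in [a,b]} \|S_\delta f(t) - f(t)\|_{H^1(\partial\Omega)} + \sup_{t \in [a,b]} \|S_\delta \partial_t f(t) - \partial_t f(t)\|_{L^2(\partial\Omega)} \longrightarrow 0,
\]
which is exactly convergence of $f_\delta$ to $f$ in the norm of $C^0([a,b], H^1(\partial\Omega)) \cap C^1([a,b], L^2(\partial\Omega))$. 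The main obstacle is property (i) on a merely Lipschitz hypersurface: commuting the Euclidean convolution with multiplication by $\chi_i$ and with the chart Jacobians introduces commutator terms that must be controlled on $L^2$ uniformly in $\delta$. Everything after step 1 is a soft functional-analytic argument.
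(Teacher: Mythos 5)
Your proposal is correct and follows essentially the same route as the paper's proof: a chart-and-partition-of-unity mollification applied pointwise in $t$, with uniform convergence obtained from compactness of the orbits $\{f(t)\}$ and $\{\partial_t f(t)\}$ together with the uniform operator bound and pointwise strong convergence of the mollifier (you via an $\varepsilon$-net, the paper via a contradiction argument in Lemma~\ref{lem:moll}). The one small divergence is cosmetic: the ``commutator terms'' you flag at the end do not actually arise in the paper's proof, because the $H^k(\pO)$ norm is \emph{defined} in~\eqref{Hkdef} through the very same charts and cutoffs used to build the mollifier, so the uniform bound~\eqref{mollbound} comes directly from the Euclidean estimate $\|\eta_\epsilon * f_j\|_{H^k(\bbR^{n-1})} \le \|f_j\|_{H^k(\bbR^{n-1})}$; meanwhile your Bochner-integral verification that $\partial_t(S_\delta f) = S_\delta \partial_t f$ in $C^1(\pO)$ is actually a bit more careful than the paper's difference-quotient argument, which only checks differentiability into $L^2$.
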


That is, if $f \in C^0([a,b], H^1(\pO)) \cap C^1([a,b], L^2(\pO))$, there exist approximating functions $f_\epsilon \in C^1([a,b], C^1(\pO))$ such that
\[
	\|f_\epsilon(t) - f(t)\|_{H^1(\pO)} + \left\| f_\epsilon'(t) - f'(t) \right\|_{L^2(\pO)} \to 0
\]
uniformly in $t$ as $\epsilon \to 0$. The main ingredient in the proof is the following lemma, which combines a standard mollification argument in local coordinates with a version of Kolmogorov's compactness criteria; cf. \cite{K31,W87}.

\begin{lemma}\label{lem:moll}
Suppose $f \in H^k(\pO)$, with $k \in \{0,1\}$. There exist functions $f_\epsilon \in C^1(\pO)$ such that $\|f_\epsilon - f\|_{H^k(\pO)} \to 0$ as $\epsilon \to 0$. Moreover, the convergence of $f_\epsilon$ to $f$ is uniform on precompact sets of $H^k(\pO)$. That is, if $S \subset H^k(\pO)$ has compact closure, then for any $\delta>0$ there exists $\epsilon_0>0$ such that
\[
	\|f_\epsilon - f\|_{H^k(\pO)} < \delta
\]
for all $\epsilon < \epsilon_0$ and all $f \in S$.
\end{lemma}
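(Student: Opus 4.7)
The plan is to reduce to the standard mollification on $\bbR^{n-1}$ via charts, then upgrade norm convergence to uniform convergence on precompact sets using the Kolmogorov--Riesz--Fr\'echet compactness theorem. Since $\pO$ is smooth under Hypothesis \ref{hyp:geo}, I choose a finite atlas of charts $\kappa_i \colon U_i \to V_i \subset \bbR^{n-1}$ covering $\pO$ and a smooth subordinate partition of unity $\{\chi_i\}$. Given $f \in H^k(\pO)$ with $k \in \{0,1\}$, I decompose $f = \sum_i \chi_i f$ and define local pushforwards $g_i = (\chi_i f) \circ \kappa_i^{-1}$, extended by zero to all of $\bbR^{n-1}$; these lie in $H^k(\bbR^{n-1})$ with compact support. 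Taking a standard nonnegative mollifier $\rho$ with $\int \rho = 1$ and setting $\rho_\epsilon(y) = \epsilon^{-(n-1)} \rho(y/\epsilon)$, I define
\[
	f_\epsilon = \sum_i \bigl( \rho_\epsilon \ast g_i \bigr) \circ \kappa_i.
\]
For $\epsilon$ small enough, each summand is smooth with support in $U_i$, so $f_\epsilon \in C^1(\pO)$.

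For a single fixed $f$, convergence $\|f_\epsilon - f\|_{H^k(\pO)} \to 0$ follows from the classical statement that $\rho_\epsilon \ast g \to g$ in $H^k(\bbR^{n-1})$ for $g \in H^k_c(\bbR^{n-1})$, combined with the fact that pullback by smooth charts gives equivalent local norms on $\pO$.

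To obtain uniformity on a precompact set $S \subset H^k(\pO)$, the key observation is
\[
	(\rho_\epsilon \ast g)(x) - g(x) = \int_{\bbR^{n-1}} \rho_\epsilon(y) \bigl(g(x-y) - g(x)\bigr) \, dy,
\]
so Minkowski's integral inequality gives
\[
	\| \rho_\epsilon \ast g - g \|_{L^2(\bbR^{n-1})} \le \sup_{|y| \le \epsilon} \|g(\cdot - y) - g\|_{L^2(\bbR^{n-1})}.
\]
The Kolmogorov--Riesz--Fr\'echet characterization of precompactness in $L^2$ asserts that the right-hand side tends to zero uniformly over any precompact subset of $L^2$. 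For $k=1$, the same bound is applied to each weak partial derivative $\p_j g$, using $\p_j (\rho_\epsilon \ast g) = \rho_\epsilon \ast \p_j g$. The image of a precompact set $S \subset H^k(\pO)$ under the continuous linear maps $f \mapsto (\chi_i f) \circ \kappa_i^{-1}$ is precompact in $H^k_c(\bbR^{n-1})$, so the uniform bound transfers back to $\pO$ after summing over $i$.

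The main obstacle is exactly the uniform equicontinuity in the mean, and this is precisely what the Kolmogorov--Riesz--Fr\'echet criterion delivers; the rest of the argument is routine bookkeeping with charts and a partition of unity.
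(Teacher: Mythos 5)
Your proposal is correct and the overall architecture---charts, subordinate partition of unity, local mollification, and transfer back to $\pO$---matches the paper's. The substantive divergence is in how you establish the uniformity over a precompact set $S$. You make a \emph{direct} estimate: Minkowski's integral inequality yields
\[
	\| \rho_\epsilon \ast g - g \|_{L^2} \le \sup_{|y|\le\epsilon}\|g(\cdot-y)-g\|_{L^2},
\]
and the Kolmogorov--Riesz--Fr\'echet criterion says the right-hand side tends to zero uniformly over precompact subsets of $L^2$ (and, via $\p_j(\rho_\epsilon\ast g)=\rho_\epsilon\ast\p_j g$ and the precompactness of $\{\nabla g\}$, the same works for $k=1$). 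The paper instead proceeds \emph{softly}: it first establishes the uniform operator bound $\|f_\epsilon\|_{H^k(\pO)}\le C\|f\|_{H^k(\pO)}$ for the mollification family, and then runs a sequential contradiction argument showing that a family of uniformly bounded linear operators converging pointwise to the identity converges uniformly on precompact sets. The paper's route is more elementary in that it requires no compactness characterization of $L^2$; yours is more explicit and gives the modulus of convergence directly, and arguably is closer to the spirit of the paper's own preamble remark about Kolmogorov's compactness criterion. One small caveat: you invoke smoothness of $\pO$ via Hypothesis \ref{hyp:geo}, whereas the lemma serves Proposition \ref{prop:approximation}, which is stated for a Lipschitz boundary; the paper accordingly works with Lipschitz graph coordinates. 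Your argument carries over verbatim to that setting since Lipschitz changes of variable preserve $H^0$ and $H^1$, but the assumption should be weakened to match.
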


The density of $C^1(\pO)$ in $L^2(\pO)$ and $H^1(\pO)$ is standard. The key to the proof of the above lemma is to construct the approximating functions $f_\epsilon$ in an explicit way that yields uniform convergence on precompact subsets of $H^k(\pO)$.

\begin{proof}
We first recall the definition of $H^s(\pO)$ for a Lipschitz domain, following \cite{M00}: There exist two finite collections of open sets, $\{W_j\}$ and $\{\Omega_j\}$, such that $\pO \subset \cup_j W_j$, $\Omega \cap W_j = \Omega_j \cap W_j$ for each $j$, and each $\Omega_j$ is given (after a rigid motion) by the hypograph of a Lipschitz function $\zeta_j \colon \bbR^{n-1} \to \bbR$. By this we mean that there is a rigid motion $\kappa_j$ of $\bbR^n$ so that
\[
	\kappa_j(\Omega_j) = \{ x = (x',x_n) \in \bbR^n : x_n < \zeta_j(x') \}.
\]
Let $\{\phi_j\}$ be a partition of unity subordinate to the covering $\{W_j\}$. Given a function $f \colon \pO \to \bbR$, we define functions $f_j \colon \bbR^{n-1} \to \bbR$ by
\begin{align}\label{fjdef}
	f_j(x') = \big(\phi_j f\big) \big(\kappa_j^{-1}(x', \zeta_j(x')) \big).
\end{align}
We then define the $H^k(\pO)$ Sobolev norm by
\begin{align}\label{Hkdef}
	\|f\|_{H^k(\pO)} = \sum_j \|f_j\|_{H^k(\bbR^{n-1})}.
\end{align}

We are now ready to define the mollification of $f \in H^k(\pO)$. We start by inverting \eqref{fjdef} as follows. If $x \in \pO \cap W_j$, then $\kappa_j(x) = (x', \zeta_j(x'))$ for a unique $x' \in \bbR^{n-1}$, namely $x' = P \kappa_j(x)$, where $P \colon \bbR^n \to \bbR^{n-1}$ denotes projection onto the first $n-1$ components. It follows that $(\phi_j f)(x) = f_j\big( P \kappa_j(x) \big)$ for any $x \in \pO \cap W_j$, and so
\[
	f(x) = \sum_j f_j\big( P \kappa_j(x) \big)
\]
for each $x \in \pO$. Now, letting $\eta_\epsilon$ denote the standard mollifier in $\bbR^{n-1}$, we set
\[
	f_\epsilon(x) = \sum_j (\eta_\epsilon \ast f_j)\big( P \kappa_j(x) \big).
\]
It follows from \eqref{Hkdef} and standard properties of $\eta_\epsilon$ that
\begin{align}\label{mollbound}
	\|f_\epsilon\|_{H^k(\pO)} \leq C \|f \|_{H^k(\pO)}
\end{align}
for some constant $C$ that does not depend on $f$ or $\epsilon$, and
\begin{align}\label{mollconverge}
	\|f_\epsilon - f\|_{H^k(\pO)} \to 0
\end{align}
as $\epsilon \to 0$. This completes the first part of the proof.

We prove the second claim by contradiction. Suppose there exists a number $\delta_0 > 0$, a sequence of positive numbers $\epsilon_n$ tending to zero, and functions $f^{(n)} \in S$ such that
\[
	\|f^{(n)}_{\epsilon_n} - f^{(n)}\|_{H^k(\pO)} \geq \delta_0
\]
for all $n$. Using \eqref{mollbound} we obtain
\begin{align*}
	\delta_0 &\leq \|f^{(n)}_{\epsilon_n} - f^{(n)}\|_{H^k(\pO)} \\
	& \leq \|f^{(n)}_{\epsilon_n} - f_{\epsilon_n} \|_{H^k(\pO)} + \|f_{\epsilon_n}  - f\|_{H^k(\pO)} + \|f  - f^{(n)}\|_{H^k(\pO)} \\
	& \leq (1+C) \|f  - f^{(n)}\|_{H^k(\pO)} + \|f_{\epsilon_n}  - f\|_{H^k(\pO)}
\end{align*}
for any function $f \in H^k(\pO)$. Since $\|f_{\epsilon_n}  - f\|_{H^k(\pO)} \to 0$ as $n \to \infty$, we have
\[
	\liminf_{n\to\infty} \|f  - f^{(n)}\|_{H^k(\pO)} \geq \frac{\delta_0}{1+C},
\]
which shows that $f^{(n)}$ has no convergent subsequences, contradicting the hypothesis on $S$.
\end{proof}

We are now ready to prove the proposition.

\begin{proof}[Proof of Proposition \ref{prop:approximation}]
Given $f \in C^0([a,b], H^1(\pO)) \cap C^1([a,b], L^2(\pO))$, we use the construction of Lemma \ref{lem:moll} to define $f_\epsilon$ pointwise in $t$, i.e. $f_\epsilon(t) = f(t)_\epsilon$ for each $t \in [a,b]$. It follows from \eqref{mollbound} that $f_\epsilon \in C^0([a,b], H^1(\pO))$ for each $\epsilon$, and \eqref{mollconverge} implies that $f_\epsilon(t) \to f(t)$ in $H^1(\pO)$ for each $t$. Since $\{f(t) : t \in [a,b]\}$ is a compact subset of $H^1(\pO)$, the convergence is in fact uniform in $t$, hence $f_\epsilon \to f$ in $C^0([a,b], H^1(\pO))$. Moreover, since
\[
	\left\| \frac{f_\epsilon(t+h) - f_\epsilon(t)}{h} - (f'(t))_\epsilon \right\|_{L^2(\pO)} \leq C
	\left\| \frac{f(t+h) - f(t)}{h} - f'(t) \right\|_{L^2(\pO)}
\]
for any $h>0$, we conclude that $f_\epsilon$ is differentiable in $t$, with
\[
	(f_\epsilon)' = (f')_\epsilon \in C^0([a,b],L^2(\pO))
\]
and $f_\epsilon'(t) \to f'(t)$ in $L^2(\pO)$, where the convergence is again uniform in $t$.
\end{proof}

\subsection{Preliminary constructions}
We now use Proposition \ref{prop:approximation} to reconstruct $u$ from its Cauchy data.

First suppose $f \in C^0([a,b],C^0(\pO))$ for some $0 < a < b < \infty$. For each $x \in \Omega_{a,b}$ there is a unique $t \in (a,b)$ and $y \in \pO$ such that $x = \varphi_t(y)$, namely $t = \sqrt{\psi(x)}$ and $y = \varphi_t^{-1}(x)$. Thus we can define a continuous function $u\colon \Omega_{a,b} \to \bbR$ by
\begin{align}\label{udef}
	u(x) = f(t)\big(\varphi_{t(x)}^{-1}(x) \big).
\end{align}

We first relate the integrability properties of $u$ to those of $f$.

\begin{lemma}\label{lem:L2}
There exists a constant $C = C(a,b)$ such that $\|u\|_{L^2(\Omega_{a,b})} \leq C \|f\|_{C^0([a,b],L^2(\pO))}$ for all $f \in C^0([a,b],C^0(\pO))$. Therefore, the map $f \mapsto u$ in \eqref{udef} extends uniquely to a bounded operator $C^0([a,b], L^2(\pO)) \to L^2(\Omega_{a,b})$.
\end{lemma}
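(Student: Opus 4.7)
The plan is to apply the coarea formula \eqref{coarea} directly with $w = u^2$, and then exploit the fact that on the compact region $\overline{\Omega_{a,b}}$ (with $a>0$), all the geometric quantities appearing in the integrand are bounded.

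Concretely, because $u$ is defined by $u(\varphi_t(y)) = f(t)(y)$, we have $(u^2) \circ \varphi_t = f(t)^2$ pointwise on $\pO$. Substituting into \eqref{coarea} gives
\[
\|u\|_{L^2(\Omega_{a,b})}^2 = \int_a^b \int_\pO \sigma_t(y)\, f(t)(y)^2\, a_t(y)\, d\mu(y)\, dt.
\]
Now $\sigma_t$ and $a_t$ are continuous functions of $(t,y)$ on the compact set $[a,b] \times \pO$; since $a>0$ we are bounded away from the singular limit $t\to 0$ and do not need the refined asymptotics of Lemma \ref{lem:sigmauniform} or Lemma \ref{lem:atbound}. (One can also read this off directly: $\sigma$ from \eqref{sigmaexplicit} is continuous on $\overline{\Omega_{a,b}}$, and $a_t$ is the continuous Jacobian determinant $|\det D\varphi_t^\p|$.) Hence there is a constant $M = M(a,b)$ with $\sigma_t(y) a_t(y) \le M$ for all $(t,y) \in [a,b] \times \pO$, which yields
\[
\|u\|_{L^2(\Omega_{a,b})}^2 \le M \int_a^b \|f(t)\|_{L^2(\pO)}^2\, dt \le M(b-a)\, \|f\|_{C^0([a,b],L^2(\pO))}^2.
\]
Setting $C = \sqrt{M(b-a)}$ gives the claimed estimate.

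For the extension statement, I would invoke the standard bounded linear transformation theorem: once density of $C^0([a,b],C^0(\pO))$ in $C^0([a,b],L^2(\pO))$ is established, the map $f \mapsto u$ extends uniquely and continuously. The density is easy from Lemma \ref{lem:moll} applied pointwise in $t$: given $f \in C^0([a,b],L^2(\pO))$, the mollifications $f_\epsilon(t) := f(t)_\epsilon$ lie in $C^0([a,b], C^1(\pO)) \subset C^0([a,b], C^0(\pO))$ (by the uniform-on-compacts claim of Lemma \ref{lem:moll}, since $\{f(t) : t \in [a,b]\}$ is compact in $L^2(\pO)$), and converge to $f$ uniformly in $t$ in $L^2(\pO)$.

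There is no real obstacle here; the only point requiring a moment's care is verifying that $\sigma_t a_t$ really is bounded on $[a,b] \times \pO$, which I would settle by a compactness argument as above rather than invoking the delicate small-$t$ estimates (which are needed elsewhere in the paper but overkill here). The structure of the proof — coarea formula, pointwise bound on the geometric weight, density and BLT — is entirely routine given the machinery already developed.
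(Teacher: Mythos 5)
Your proof is correct and follows essentially the same route as the paper: coarea formula, uniform boundedness of $\sigma_t a_t$ on $[a,b]\times\pO$ (which the paper simply asserts and you justify by compactness), then density plus the bounded-linear-transformation principle. The only cosmetic difference is that you cite Lemma~\ref{lem:moll} directly for the density of $C^0([a,b],C^0(\pO))$ in $C^0([a,b],L^2(\pO))$, whereas the paper cites Proposition~\ref{prop:approximation}; your citation is arguably the sharper one, since that proposition is stated for a different pair of spaces and it is really the mollification lemma doing the work.
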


\begin{proof}

Let $f \in C^0([a,b],C^0(\pO))$. From the definition of $u$ and the coarea formula \eqref{coarea} we have
\begin{align*}
	\int_{\Omega_{a,b}} u^2 
	&= \int_a^b \left( \int_\pO \sigma_t f(t)^2 a_t \,d\mu \right) dt \\
	&\leq C \sup_{a \leq t \leq b} \| f(t) \|_{L^2(\pO)}^2,
\end{align*}
since $\sigma_t$ and $a_t$ are bounded uniformly for $t \in [a,b]$. The existence of a unique bounded extension follows from the density of $C^0([a,b],C^0(\pO))$ in $C^0([a,b], L^2(\pO))$, using Proposition \ref{prop:approximation}.
\end{proof}

We next examine the differentiability properties of $u$.

\begin{lemma}\label{lem:H1}
If $f \in C^0([a,b],H^1(\pO)) \cap C^1([a,b],L^2(\pO))$, then $u \in H^1(\Omega_{a,b})$ and the weak derivative is given by
\begin{align}\label{eqn:weak}
	\nabla u \big|_{\pOt} = \nabla^{\pOt} \left(f \circ \varphi_t^{-1}\right) + \sigma^{-1} \nu \left( \frac{df}{dt} \circ \varphi_t^{-1} - \gamma \cdot \nabla^{\pOt} \left(f \circ \varphi_t^{-1} \right) \right) \in L^2(\pOt)
\end{align}
for $a < t < b$.
\end{lemma}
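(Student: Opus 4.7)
The plan is to establish the formula first for smooth approximations via the chain rule, then pass to the limit using Proposition~\ref{prop:approximation} together with the coarea formula.

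First I would approximate. By Proposition~\ref{prop:approximation}, pick $f_\epsilon \in C^1([a,b], C^1(\pO))$ with $f_\epsilon \to f$ in $C^0([a,b],H^1(\pO)) \cap C^1([a,b],L^2(\pO))$, and let $u_\epsilon$ be associated to $f_\epsilon$ via \eqref{udef}. For such regular $f_\epsilon$, the map $u_\epsilon$ is $C^1$ on $\Omega_{a,b}$: indeed, both $x \mapsto t(x) = \sqrt{\psi(x)}$ and $x \mapsto \varphi_{t(x)}^{-1}(x)$ are smooth on $\Omega_{a,b}$ (since $\nabla\psi \neq 0$ there), so we may differentiate classically.

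Next I would compute the gradient of $u_\epsilon$ in normal/tangential components at $x \in \pOt$. Moving in a direction $V \in T_x \pOt$ does not change $t$, and $\varphi_t^{-1}$ maps $\pOt$ diffeomorphically to $\pO$, so the tangential component of $\nabla u_\epsilon$ at $x$ is $\nabla^{\pOt}\big(f_\epsilon(t)\circ \varphi_t^{-1}\big)$. For the normal component, I would differentiate along the flow: for $y = \varphi_t^{-1}(x)$, the trajectory $s \mapsto \varphi_s(y)$ satisfies $u_\epsilon(\varphi_s(y)) = f_\epsilon(s)(y)$, so
\[
\frac{df_\epsilon}{dt}(t)(\varphi_t^{-1}(x)) = \nabla u_\epsilon(x) \cdot X(x,t) = \sigma(x)\,\nabla u_\epsilon\cdot \nu + \gamma\cdot \nabla^{\pOt}\big(f_\epsilon\circ \varphi_t^{-1}\big),
\]
using $X = \sigma\nu + \gamma$ and the tangentiality of $\gamma$. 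Solving for the normal component and adding it to the tangential one yields exactly \eqref{eqn:weak} with $f$ replaced by $f_\epsilon$.

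Then I would pass to the limit in $L^2(\Omega_{a,b})$. Using the coarea formula \eqref{coarea},
\[
\int_{\Omega_{a,b}} |\nabla u_\epsilon - R_\epsilon|^2 = \int_a^b \!\int_\pO \sigma_t\,\big|\nabla u_\epsilon - R_\epsilon\big|^2\!\circ\varphi_t\; a_t\, d\mu\, dt,
\]
where $R_\epsilon$ denotes the right-hand side of \eqref{eqn:weak} for $f_\epsilon$. On the compact interval $[a,b]$, the functions $\sigma$, $\sigma^{-1}$, $\gamma$, $a_t$, and the maps $\varphi_t$, $\varphi_t^{-1}$ and their derivatives are uniformly bounded on $\pO$ and on $\pOt$, so the relevant $L^2(\pOt)$ norms of each term in $R_\epsilon - R$ are controlled by the $H^1(\pO)$ norm of $f_\epsilon - f$ and the $L^2(\pO)$ norm of $\frac{df_\epsilon}{dt} - \frac{df}{dt}$ (via the analogues, in this nonsingular regime, of the scaling estimates of Lemma~\ref{lem:Sobolev}). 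Hence $R_\epsilon$ converges in $L^2(\Omega_{a,b})$ to the candidate $R$ defined by the right-hand side of \eqref{eqn:weak}. Similarly $u_\epsilon \to u$ in $L^2(\Omega_{a,b})$ by Lemma~\ref{lem:L2} applied to $f_\epsilon - f$.

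Finally, since $u_\epsilon \to u$ and $\nabla u_\epsilon \to R$ in $L^2(\Omega_{a,b})$, the standard fact that $\nabla$ is closed from $L^2$ to $L^2$ gives $u \in H^1(\Omega_{a,b})$ with weak gradient $R$, which is precisely \eqref{eqn:weak}. The only subtle point is the bookkeeping in the limit-passing step: verifying that the $L^2(\Omega_{a,b})$ norm of each term in $R_\epsilon - R$ genuinely vanishes despite the appearance of first tangential derivatives of $f_\epsilon \circ \varphi_t^{-1}$; this is handled cleanly by the coarea identity together with the uniform bounds on $[a,b]$, and is where the hypothesis $f \in C^0([a,b],H^1(\pO))$ (rather than merely $L^2$) is essential.
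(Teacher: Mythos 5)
Your proposal is correct and takes essentially the same approach as the paper: compute \eqref{eqn:weak} classically for smooth approximants $f_\epsilon$ produced by Proposition~\ref{prop:approximation} (differentiating along the flow to isolate the normal component), then pass to the limit in $L^2(\Omega_{a,b})$ via the coarea formula and closedness of the gradient. The paper phrases the final step as an $L^2(\Omega_{a,b})$ bound for $\nabla u$ in terms of $\sup_t (\|df/dt\|_{L^2} + \|f(t)\|_{H^1})$ followed by density, which is the same argument you spell out via $R_\epsilon \to R$.
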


\begin{proof}
We again use a density argument based on Proposition \ref{prop:approximation}. If $f \in C^0([a,b],C^1(\pO)) \cap C^1([a,b],C^0(\pO))$, then $u \in C^1(\Omega_{a,b})$. Differentiating the equation $f = u \circ \varphi_t$, we obtain
\begin{align*}
	\frac{df}{dt} 
	&= \big( X \cdot \nabla u \big) \circ \varphi_t \\
	&= \big( \gamma \cdot \nabla u + \sigma \nu \cdot \nabla u \big) \circ \varphi_t \\
	&= \left( \gamma \cdot \nabla^{\pOt} \left(f \circ \varphi_t^{-1}\right) + \sigma \frac{\p u}{\p \nu} \right) \circ \varphi_t
\end{align*}
and so the normal derivative of $u$ can be computed in terms of $f$ as
\[
	\frac{\p u}{\p\nu} = \sigma^{-1} \left(\frac{df}{dt} \circ \varphi_t^{-1} - \gamma \cdot \nabla^{\pOt} \left(f \circ \varphi_t^{-1}\right)\right).
\]
Decomposing $\nabla u$ into normal and tangential components along $\pOt$,
\begin{align*}
	\nabla u \big|_{\pOt} = \nabla^{\pOt} u + \frac{\p u}{\p \nu} \nu,
\end{align*}
we arrive at \eqref{eqn:weak}.

Next, using \eqref{eqn:weak}, Lemma \ref{lem:Sobolev}, and the fact that $\sigma$ and $\gamma$ are uniformly bounded on $\Omega_{a,b}$, we find that
\[
	\left\| \nabla u \big|_{\pOt} \right\|_{L^2(\pOt)} \leq C \left( \left\| \frac{df}{dt}\right\|_{L^2(\pO)} + \|f(t)\|_{H^1(\pO)} \right)
\]
for some constant $C = C(a,b)$. It then follows from the coarea formula, as in the proof of Lemma~\ref{lem:L2}, that
\[
	\| \nabla u \|_{L^2(\Omega_{a,b})} \leq C \sup_{a \leq t \leq b} \left( \left\| \frac{df}{dt}\right\|_{L^2(\pO)} + \|f(t)\|_{H^1(\pO)} \right).
\]
The result now follows from Proposition \ref{prop:approximation}.
\end{proof}

We are now ready to prove the main result of this section, which will allow us to describe a weak solution to \eqref{eqn:PDE} in terms of its restriction to each hypersurface $\pOt$.

\begin{lemma}\label{lem:weaksol}
If $f \in C^0([a,b],\Ht) \cap C^1([a,b],\Hp) \cap C^2([a,b],\Hm)$, 
and $g$ is defined by
\[
	g = \sigma_t^{-1} \left( \frac{df}{dt} - T_t f \right),
\]
then
\begin{align}\label{eqn:weaksol}
	\int_{\Omega_{a,b}} \nabla u \cdot \nabla v = 
	-\int_a^b \left(\int_{\pO} (v \circ \varphi_t) \left\{L_t f 
	+ \frac{dg}{dt} + \sigma_t H_t g - T_t g \right\} a_t \,d\mu \right) dt
\end{align}
for any $v \in H^1_0(\Omega_{a,b})$.
\end{lemma}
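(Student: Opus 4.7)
The plan is a direct computation based on two integration-by-parts maneuvers, validated by a density argument. When $f$ is smooth the identity follows classically from Lemma \ref{lem:H1}, the coarea formula \eqref{coarea}, and tangential/temporal integrations by parts; the stated regularity is exactly enough to justify each term by continuity.

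First, I would assume $f\in C^2([a,b],C^2(\pO))$, so that $u\in C^2(\overline{\Omega_{a,b}})$. By Lemma \ref{lem:H1} together with the definition $g=\sigma_t^{-1}(df/dt-T_tf)$, the normal-derivative term in \eqref{eqn:weak} collapses to $\p u/\p\nu=g\circ\varphi_t^{-1}$ on $\pOt$, so
\[
\nabla u\cdot\nabla v\big|_{\pOt}=\nabla^{\pOt}(f\circ\varphi_t^{-1})\cdot\nabla^{\pOt}v+(g\circ\varphi_t^{-1})\,\frac{\p v}{\p\nu}.
\]
Applying the coarea formula \eqref{coarea} and integrating the tangential summand by parts on the closed hypersurface $\pOt$, then pulling back to $\pO$ via the definition \eqref{Ltdef}, produces the contribution $-\int_a^b\int_\pO(v\circ\varphi_t)(L_tf)a_t\,d\mu\,dt$, which matches the $L_tf$ piece of \eqref{eqn:weaksol}.

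For the normal-derivative summand, I would use $X=\sigma\nu+\gamma$ together with the chain-rule identity $\frac{d}{dt}(v\circ\varphi_t)=(X\cdot\nabla v)\circ\varphi_t$ from \eqref{Xdef} to rewrite
\[
\bigl(\sigma\,\p v/\p\nu\bigr)\circ\varphi_t=\frac{d}{dt}(v\circ\varphi_t)-T_t(v\circ\varphi_t).
\]
The $d/dt$ piece is integrated by parts in $t$: the boundary contributions at $t=a,b$ vanish because $v\in H^1_0(\Omega_{a,b})$ has zero trace on $\pO_a\cup\pO_b$, and the time derivative falls onto the product $g\,a_t$, which Lemma \ref{lem:firstvar} expands as $\frac{d}{dt}(g\,a_t)=\frac{dg}{dt}\,a_t+g\,a_t\bigl[\sigma_tH_t+(\dv^{\pOt}\gamma)\circ\varphi_t\bigr]$. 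The $T_t$ piece is integrated by parts tangentially on each $\pOt$; the product rule for $\dv^{\pOt}$ yields $-\int (v\circ\varphi_t)\bigl[T_tg+g(\dv^{\pOt}\gamma)\circ\varphi_t\bigr]a_t\,d\mu$. The two $g(\dv^{\pOt}\gamma)\circ\varphi_t$ contributions carry opposite signs and cancel exactly, leaving $-\int_a^b\int_\pO(v\circ\varphi_t)a_t\bigl[\frac{dg}{dt}+\sigma_tH_tg-T_tg\bigr]d\mu\,dt$. Adding this to the $L_tf$ contribution yields \eqref{eqn:weaksol}.

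To pass from the smooth case to the stated hypothesis, I would use spatial mollification on $\pO$, as in the proof of Lemma \ref{lem:moll} extended to the relevant Sobolev indices, to approximate $f$ without disturbing its time regularity. Under the hypothesis each bracketed term on the right-hand side of \eqref{eqn:weaksol} belongs to $\Hm$, while $(v\circ\varphi_t)a_t\in\Hp$ for $v\in H^1_0(\Omega_{a,b})$ by the trace theorem; the integrals are then well-defined $\Hp$--$\Hm$ duality pairings, continuous in $f$ with respect to the norm $C^0([a,b],\Ht)\cap C^1([a,b],\Hp)\cap C^2([a,b],\Hm)$, and the classical identity descends by continuity. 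The main nuisance, rather than a substantive obstacle, is this last step: the careful matching of Sobolev indices is precisely what forces two time derivatives of $f$ to appear in the hypothesis, since one differentiation of $g=\sigma_t^{-1}(df/dt-T_tf)$ eats one of them.
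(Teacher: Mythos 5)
Your proposal is correct and follows essentially the same approach as the paper: decompose $\nabla u\cdot\nabla v$ into tangential and normal parts via \eqref{eqn:weak}, use the coarea formula, integrate by parts tangentially on $\pOt$ to produce $L_t f$, use the chain rule $\tfrac{d}{dt}(v\circ\varphi_t)=(X\cdot\nabla v)\circ\varphi_t$ to rewrite the normal term, integrate by parts in $t$ (boundary terms vanish by $v\in H^1_0$), apply the first variation formula (Lemma~\ref{lem:firstvar}), and observe the $(\dv^{\pOt}\gamma)$ contributions cancel. The only cosmetic difference is organizational: the paper chooses to integrate by parts the $\dv^{\pOt}\gamma$ term that arises from $da_t/dt$, whereas you integrate by parts the $T_t(v\circ\varphi_t)$ term; these two bookkeeping choices produce the same cancellation. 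Your added density step in $f$ (mollification preserving time regularity) is a reasonable way to make the formal calculation rigorous under the stated hypotheses, though the paper only invokes density in $v$ and treats the integrals directly as duality pairings.
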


\begin{proof}
It suffices to consider $v \in C^\infty_0(\Omega_{a,b})$. The coarea formula yields
\begin{align*}
	\int_{\Omega_{a,b}} \nabla u \cdot \nabla v = 
	\int_a^b \left(\int_{\pOt} \sigma (\nabla u \cdot \nabla v) d\mu_t \right) dt.
\end{align*}
On $\pOt$ we use \eqref{eqn:weak} and the definition of $g$ to write
\[
	\left.\nabla u \cdot \nabla v\right|_{\pOt}
	= \nabla^{\pOt} \left(f \circ \varphi_t^{-1}\right) \cdot \nabla^{\pOt} v + \frac{\p v}{\p \nu} \left(g \circ \varphi_t^{-1} \right).
\]
For the tangential part we compute
\begin{align}\label{eqn:tan}
	\int_{\pOt} \sigma \nabla^{\pOt} \left(f \circ \varphi_t^{-1}\right) \cdot \nabla^{\pOt} v \,d\mu_t
	&= -\int_{\pOt} v \dv^{\pOt} \left(\sigma \nabla^{\pOt} \left(f \circ \varphi_t^{-1}\right) \right) d\mu_t 
	\nonumber\\
	&= -\int_{\pO} (v \circ \varphi_t) (L_t f) a_t\, d\mu.
\end{align}
For the normal part we have
\begin{align*}
	\int_{\pOt} \sigma \frac{\p v}{\p \nu} \left(g \circ \varphi_t^{-1} \right) d\mu_t 
	&= \int_{\pO} \sigma_t \left(\frac{\p v}{\p \nu} \circ \varphi_t\right) g a_t \,d\mu \nonumber \\
	&= \int_{\pO} \left[ \frac{d}{dt} (v \circ \varphi_t) - (\gamma \cdot \nabla^{\pOt} v) \circ \varphi_t \right]g a_t\, d\mu.
\end{align*}
The first term on the right-hand side can be written as
\begin{align*}
	\int_{\pO} \frac{d}{dt} (v \circ \varphi_t) g a_t\, d\mu &= \frac{d}{dt} \int_{\pO} (v \circ \varphi_t) g a_t \, d\mu - \int_{\pO} (v \circ \varphi_t) \frac{dg}{dt} a_t \,d\mu - \int_{\pO} (v \circ \varphi_t) g \frac{da_t }{dt} \,d\mu.
\end{align*}
We use the first variation of area formula (Lemma \ref{lem:firstvar}) to obtain
\begin{align*}
	\int_{\pO} (v \circ \varphi_t) g \frac{da_t }{dt} \,d\mu = \int_\pO (v \circ \varphi_t)  \left\{ \sigma_t H_t + \big(\dv^{\pOt} \gamma \big) \circ\varphi_t \right\} g a_t \, d\mu
\end{align*}
and then apply the divergence theorem to the last term to find
\begin{align*}
	\int_\pO \left[\big(v \dv^{\pOt} \gamma \big) \circ\varphi_t \right] g a_t \, d\mu &= \int_\pOt v \dv^{\pOt} \gamma \left(g \circ \varphi_t^{-1}\right) d\mu_t \\
	&= - \int_\pOt \gamma \cdot \left[ v \nabla^{\pOt} \left(g \circ \varphi_t^{-1}\right) + \left(g \circ \varphi_t^{-1}\right) \nabla^{\pOt} v \right] d\mu_t\\
	&= - \int_\pO (v \circ \varphi_t) (T_t g) a_t\, d\mu - \int_\pO \left[(\gamma \cdot \nabla^{\pOt} v) \circ \varphi_t \right] g a_t\, d\mu.
\end{align*}
It follows that
\begin{align}\label{eqn:nor}
	\int_{\pOt} \sigma \frac{\p v}{\p \nu} \left(g \circ \varphi_t^{-1} \right) d\mu_t 
	= \frac{d}{dt} \int_{\pO} (v \circ \varphi_t) g a_t \, d\mu - \int_\pO (v \circ \varphi_t) \left\{ \frac{dg}{dt} + \sigma_t H_t g - T_t g \right\} a_t\, d\mu.
\end{align}
The result follows from adding \eqref{eqn:tan} and \eqref{eqn:nor}, then integrating from $a$ to $b$. The first term from the right-hand side of \eqref{eqn:nor} integrates to zero because $v$ vanishes on $\pO_a$ and $\pO_b$.
\end{proof}

\subsection{Proof of Theorem \ref{thm:local}}\label{sec:localproof}
First assume that $u$ solves \eqref{eqn:PDE} on $\Omega_{0,T}$, in the sense of Definition \ref{def:PDEsol}. This means $F(\cdot,u) \in L^2(\Omega_{a,T})$, and hence $\Delta u \in L^2(\Omega_{a,T})$, for any $a \in (0,T)$. Elliptic regularity (for instance \cite[Theorem 4.16]{M00}) implies that $u \in H^2(\Omega_{a,b})$ for any $0 < a < b < T$, and so $\Tr_t u \in \cH^1 = \Ht \oplus \Hp$ for $t \in (0,T)$. Since $t \mapsto (u \circ \varphi_t)\big|_{\Omega}$ is continuous in $H^2$ for $t \in (0,T)$, we in fact have $\Tr_t u \in C^0\big((0,T),\cH^1\big)$. Next observe that $t \mapsto (u \circ \varphi_t)\big|_{\Omega}$ is differentiable in $H^1$ for $t \in (0,T)$, and continuous for $t \in (0,T]$. Similarly, $t \mapsto \Delta(u \circ \varphi_t)\big|_{\Omega}$ is differentiable in $L^2$ for $t \in (0,T)$ and continuous for $t \in (0,T]$. It follows from \cite[Lemma 3.2]{CJM15} that $\Tr_t u \in C^1\big((0,T),\cH\big) \cap C^0\big((0,T],\cH\big)$. Finally, the coarea formula implies (as in Lemma \ref{lem:L2}) that $F_t(f) \in L^2([a,T],L^2(\pO))$ for any $a \in (0,T)$. Therefore $(f,g) = \Tr_t u$ satisfies the regularity conditions in Definition \ref{def:ODEsol}. We next show that it satisfies the differential equation \eqref{eqn:ODE}.

Taking the normal component of \eqref{eqn:weak}, we obtain
\[
	\left.\frac{\p u}{\p \nu} \right|_{\pOt} = \sigma^{-1} \left( \frac{df}{dt} \circ \varphi_t^{-1} - \gamma \cdot \nabla^{\pOt} \left(f \circ \varphi_t^{-1} \right) \right),
\]
hence
\[
	\frac{df}{dt} = \left(\gamma \cdot \nabla^{\pOt}\left(f \circ \varphi_t^{-1} \right)+ \sigma \frac{\p u}{\p \nu} \right) \circ \varphi_t = T_t f + \sigma_t g.
\]
This verifies the first equation of \eqref{eqn:ODE}.
Next, using \eqref{eqn:weaksol} and the definition of a weak solution to $\Delta u + F(x,u) = 0$, we compute
\begin{align*}
	\int_{\Omega_{a,b}} F(\cdot,u) v &= \int_{\Omega_{a,b}} \nabla u \cdot \nabla v \\
	&= -\int_a^b \left(\int_{\pO} (v \circ \varphi_t) \left\{L_t f 
	+ \frac{dg}{dt} + \sigma_t H_t g - T_t g \right\} a_t \,d\mu \right) dt
\end{align*}
for any $v \in H^1_0(\Omega_{a,b})$. Comparing with
\begin{align*}
	\int_{\Omega_{a,b}} F(\cdot,u)v &= \int_a^b \left(\int_{\pOt} \sigma F(x,u) v\, d\mu \right) dt \\
	&=  \int_a^b \left(\int_{\pO} (v \circ \varphi_t) \sigma_t F_t(f)  a_t \,d\mu \right) dt,
\end{align*}
we find that
\[
	\sigma_t F_t(f) = - \left\{L_t f 
	+ \frac{dg}{dt} + \sigma_t H_t g - T_t g \right\}
\]
which is the second equation of \eqref{eqn:ODE}. The completes the first half of the proof.

Now assume $(f,g)$ satisfies \eqref{eqn:ODE} on $(0,T)$, in the sense of Definition \ref{def:ODEsol}. Define $u$ by \eqref{udef}. We must show that $u$ is a weak solution to \eqref{eqn:PDE} on $\Omega_{0,T}$, i.e. $u$ is a weak solution on $\Omega_{a,b}$ for any $0 < a < b = T$.

For any such $a$ and $b$ we have
\begin{align*}
	f \in C^0([a,b],\Ht) \cap C^0([a,b],\Hp), \\
	g \in C^0([a,b],\Hp) \cap C^0([a,b],\Hm), \\
	F_t(f) \in L^2([a,b], L^2(\pO)).
\end{align*}
In particular, $f \in C^0([a,b],H^1(\pO))$ and $g \in C^0([a,b],L^2(\pO))$, so Lemmas \ref{lem:L2} and \ref{lem:H1} imply $u \in H^1(\Omega_{a,b})$. Moreover, it follows from the coarea formula that $F(\cdot,u) \in L^2(\Omega_{a,b})$.

Let $v \in H^1_0(\Omega_{a,b})$. Using \eqref{eqn:weaksol} and the coarea formula as in the first half of the proof, we obtain
\begin{align*}
	\int_{\Omega_{a,b}} F(\cdot,u)v 	&=  \int_a^b \left(\int_{\pO} (v \circ \varphi_t) \sigma_t F_t(f) \, a_t d\mu \right) dt \\
	&= -\int_a^b \left(\int_{\pO} (v \circ \varphi_t) \left\{L_t f 
	+ \frac{dg}{dt} + \sigma_t H_t g - T_t g \right\} a_t \,d\mu \right) dt \\
	&= \int_{\Omega_{a,b}} \nabla u \cdot \nabla v,
\end{align*}
which says that $u$ is a weak solution to \eqref{eqn:PDE} on $\Omega_{a,b}$. This completes the proof of Theorem \ref{thm:local}.

\subsection{Proof of Theorem \ref{thm:global}}
It is easier to obtain estimates for the $\Hm$ norm of $a_t g(t)$, rather than the norm of $g(t)$ alone. The results obtained below are related to the estimates given in Theorem \ref{thm:global} by the following lemma.

\begin{lemma}
Let $a$ be a positive, continuous function on $\pO$. Then
\[
	(\min a) \|g\|_{\Hm} \leq \|a g \|_{\Hm} \leq (\max a) \|g\|_{\Hm}
\]
for all $g \in \Hm$.
\end{lemma}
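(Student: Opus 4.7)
The plan is to prove both bounds by duality. Using the characterization
\[
\|h\|_{\Hm} = \sup\bigl\{\,|\langle h, \phi\rangle| : \phi \in \Hp,\ \|\phi\|_{\Hp} \leq 1\,\bigr\},
\]
where $\langle \cdot, \cdot \rangle$ is the duality pairing extending the $L^2(\pO)$ inner product, together with the identity $\langle ag, \phi\rangle = \langle g, a\phi\rangle$, the upper bound reduces to the multiplier estimate
\[
\|a\phi\|_{\Hp} \leq (\max a)\,\|\phi\|_{\Hp} \quad \text{for all } \phi \in \Hp.
\]
Granting this estimate, one has $|\langle g, a\phi\rangle| \leq \|g\|_{\Hm}\|a\phi\|_{\Hp} \leq (\max a)\|g\|_{\Hm}\|\phi\|_{\Hp}$, and taking the supremum over $\phi$ with $\|\phi\|_{\Hp} \leq 1$ gives the desired upper bound.

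The lower bound will then follow by applying the upper bound to $a^{-1}$ in place of $a$. Since $a$ is continuous and positive on the compact set $\pO$, its minimum is strictly positive, so $a^{-1}$ is continuous and positive with $\max(a^{-1}) = (\min a)^{-1}$. Applying the upper bound to $a^{-1}$ and to the element $ag \in \Hm$ yields
\[
\|g\|_{\Hm} = \|a^{-1}(ag)\|_{\Hm} \leq \frac{1}{\min a}\,\|ag\|_{\Hm},
\]
which rearranges to $(\min a)\|g\|_{\Hm} \leq \|ag\|_{\Hm}$.

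The main obstacle will be the multiplier estimate with the \emph{sharp} constant $\max a$. For merely continuous $a$ this is not provided by classical Sobolev multiplier theorems, which typically require extra regularity and lose a constant depending on that regularity; indeed multiplication by a bounded function is not in general a bounded operator on $H^{1/2}(\pO)$. My plan to handle this is to first establish the estimate for smooth $a$, where one can combine the pointwise bound $\|a\phi\|_{L^2(\pO)} \leq (\max a)\|\phi\|_{L^2(\pO)}$ with a counterpart on a higher-regularity space and use a careful interpolation or positivity argument that preserves the sharp constant. Then for a general continuous $a$, I would approximate $a$ uniformly by smooth $a_k$ (via mollification in local charts with a partition of unity on $\pO$) with $\max a_k \to \max a$, and pass to the limit: apply the smooth multiplier estimate to each $a_k$ and use that $(a - a_k)\phi \to 0$ in $\Hp$ on a suitable dense subspace, extending to all of $\Hp$ by density. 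The same approximation simultaneously gives meaning to the product $ag$ as an element of $\Hm$, so that the duality manipulations at the start of the proof are justified \emph{a posteriori}.
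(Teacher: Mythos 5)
Your proposal reduces the upper bound to the multiplier estimate $\|a\phi\|_{\Hp} \leq (\max a)\|\phi\|_{\Hp}$, but that estimate is false, even for smooth, positive $a$, so the approach cannot be repaired. A counterexample: take $\pO = S^1$, $a_N(x) = 2 + \sin(Nx)$, and $\phi \equiv 1$. Then $\max a_N = 3$ and $\|\phi\|_{H^{1/2}(S^1)}$ is a fixed constant, but $\|a_N\phi\|_{H^{1/2}(S^1)}^2$ grows like $N$ because the frequency content of $a_N$ is weighted by the fractional derivative in the $H^{1/2}$ norm. You correctly flag this as the main obstacle, but the proposed fixes cannot close the gap: any bound on $H^1$ (or any space of positive regularity) necessarily involves derivatives of $a$, so interpolation between $L^2$ and such a space yields a constant depending on $\|\nabla a\|_\infty$ and not merely on $\max a$; no approximation $a_k \to a$ can then remove that dependence, because it is already present for smooth $a$. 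In short, the step you hope to "establish for smooth $a$ by interpolation or positivity" is a false statement, not a hard lemma.

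The paper's proof also goes through the duality characterization of $\Hm$, but it never moves $a$ across the pairing as a multiplier. Instead, it restricts the supremum defining $\|ag\|_{\Hm}$ to smooth test functions $f$ satisfying $fg \geq 0$ pointwise. Since $a > 0$, the integrand $agf$ is then pointwise nonnegative, so the scalar inequality $a \leq \max a$ can be applied directly under the integral sign to give $\int_{\pO} a g f \, d\mu \leq (\max a)\int_{\pO} g f\, d\mu \leq (\max a)\|g\|_{\Hm}$; no bound on the multiplication operator is needed. The lower bound then follows by the same $a \mapsto a^{-1}$ substitution you propose, which is fine. To salvage your argument, replace the multiplier step with this pointwise positivity argument applied inside the duality pairing.
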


\begin{proof}
It suffices to consider smooth $g$. We compute
\begin{align*}
	\|ag\|_{\Hm} &= \sup \left\{ \left| \int_{\pO} agf \,d\mu \right| : f \in C^{\infty}(\pO) \text{ and } \|f\|_{\Hp} = 1 \right\} \\
	&= \sup \left\{ \int_{\pO} agf \,d\mu : f \in C^{\infty}(\pO), \|f\|_{\Hp} = 1 \text{ and } fg \geq 0 \right\} \\
	&\leq \sup \left\{ (\max a) \int_{\pO} gf \,d\mu : f \in C^{\infty}(\pO), \|f\|_{\Hp} = 1 \text{ and } fg \geq 0 \right\} \\
	&= (\max a) \|g\|_{\Hp}.
\end{align*}
Replacing $a$ with $a^{-1}$ and $g$ with $ag$, we obtain
\[
	\|g\|_{\Hm} = \| a^{-1} (ag) \|_{\Hm} \leq (\max a^{-1}) \|ag\|_{\Hm} = \frac{1}{\min a} \|ag\|_{\Hm},
\]
which completes the proof.
\end{proof}

Combining this with Lemma \ref{lem:atbound}, we see that there are constants $c_1$ and $c_2$ such that
\begin{align}\label{gtbound}
	c_1 t^{n-1} \|g\|_{\Hm} \leq \|a_t g \|_{\Hm} \leq c_2 t^{n-1} \|g\|_{\Hm}
\end{align}
for all $g \in \Hm$ and sufficiently small $t>0$.

Keeping \eqref{gtbound} in mind, we begin the proof of Theorem \ref{thm:global}.

First assume that $(f,g)$ is a solution to \eqref{eqn:ODE} on $(0,T)$ satisfying the bound \eqref{fgbound}. Let $u$ be the corresponding weak solution to \eqref{eqn:PDE} on $\Omega_{0,T}$, which exists by Theorem \ref{thm:local}. We must prove that $u \in H^1(\Omega_{T})$, and $u$ is in fact a weak solution on $\Omega_T$. To that end, let $b = T$.

From \eqref{fgbound} we obtain $\|f(t)\|^2_{L^2(\pO)} \leq C t^{-2p}$ for some constant $C$. Computing as in the proof of Lemma~\ref{lem:L2}, and using Lemmas \ref{lem:sigmauniform} and \ref{lem:atbound}, we find
\begin{align*}
	\int_{\Omega_{a,b}} u^2 &= \int_a^b \| \sqrt{\sigma_t a_t} f(t)\|_{L^2(\pO)}^2 dt \\
	& \leq C\int_a^b t^{n-2p-1} \,dt \\
	& \leq C,
\end{align*}
where $C$ does not depend on $a$, since $n-2p-1 > -1$. It follows from the monotone convergence theorem that
\[
	\int_{\Omega_b} u^2 = \lim_{a \to 0^+} \int_{\Omega_{a,b}} u^2 \leq C,
\]
so $u \in L^2(\Omega_b)$.

We next show that $u \in H^1(\Omega_b)$. Since $u$ is a weak solution on $\Omega_{a,b}$ for any $a>0$, Green's first identity implies
\begin{align}\label{Greenab}
	\int_{\Omega_{a,b}} \left( |\nabla u|^2 - uF(\cdot,u) \right) = \int_{\p\Omega_{a,b}} u \frac{\p u}{\p\nu}.
\end{align}
On any $\pOt$ we have
\begin{align*}
	\int_{\pOt} u \frac{\p u}{\p\nu} \,d\mu_t 
	&= \int_{\pO} (u \circ \varphi_t)\left(\frac{\p u}{\p\nu} \circ \varphi_t \right) a_t \,d\mu \\
	&= \int_{\pO} f(t) g(t) a_t \, d\mu
\end{align*}
and so \eqref{fgbound} implies that
\begin{align*}
	\left| \int_{\pOt} u \frac{\p u}{\p\nu} d\mu_t  \right| \leq \|f(t)\|_{\Hp} \| a_t g(t)\|_{\Hm}
\end{align*}
is bounded near $t = 0$. It follows that
\begin{align*}
	\left| \int_{\p\Omega_{a,b}} u \frac{\p u}{\p\nu} \right| &\leq K
\end{align*}
for some constant $K$ that does not depend on $a$. Together with \eqref{Greenab}, this implies
\[
	\|\nabla u\|_{L^2(\Omega_{a,b})}^2 \leq \|u\|_{L^2(\Omega_{a,b})}  \|F(\cdot,u)\|_{L^2(\Omega_{a,b})} + K,
\]
and so $\|\nabla u\|_{L^2(\Omega_{a,b})}^2 \leq C$, with $C$ independent of $a$. The monotone convergence theorem now implies
\[
	\int_{\Omega_b} |\nabla u|^2 = \lim_{a\to0^+} \int_{\Omega_{a,b}} |\nabla u|^2 \leq C
\]
hence $u \in H^1(\Omega_b)$ as was claimed.

Finally, we prove that $u$ is a weak solution to \eqref{eqn:PDE} on $\Omega_b$. Let $v \in C^{\infty}_0(\Omega_b)$. 
Since $u$ is a weak solution on $\Omega_{a,b}$ and $v$ vanishes on $\pO_b$, Green's first identity implies
\begin{align}\label{Greenab2}
	\int_{\Omega_{a,b}} \nabla u \cdot \nabla v = \int_{\Omega_{a,b}} F(\cdot,u) v - \int_{\pO_a} v \frac{\p u}{\p \nu} \,d\mu_a.
\end{align}
Since $\nabla u \cdot \nabla v \in L^1(\Omega_b)$, we can apply the dominated convergence theorem to the functions $(\nabla u \cdot \nabla v)\mathcal{X}_{_{\Omega_{a,b}}}$ to obtain
\[
	\lim_{a\to0^+} \int_{\Omega_{a,b}} \nabla u \cdot \nabla v = \int_{\Omega_b} \nabla u \cdot \nabla v.
\]
It similarly follows that
\[
	\lim_{a\to0^+} \int_{\Omega_{a,b}} F(\cdot,u) v
	= \int_{\Omega_b} F(\cdot,u)v.
\]
The boundary term in \eqref{Greenab2} can be written as\footnote{Here we write $t$ instead of $a$ for the domain $\Omega_{a,b}$, to avoid confusion with the area function $a_t$.}
\[
	\int_{\pO_t} v \frac{\p u}{\p \nu} \,d\mu_t = \int_{\pO} (v \circ \varphi_t) g(t) a_t\, d\mu
\]
and so, using Lemma \ref{lem:Sobolev} and the boundedness of the Sobolev trace map, we obtain
\begin{align*}
	\left| \int_{\pO_t} v \frac{\p u}{\p \nu} \,d\mu_t \right| & \leq \|v \circ \varphi_t\|_{\Hp} \|a_t g(t)\|_{\Hm} \\
	& \leq C \|v \circ \varphi_t\|_{H^1(\Omega)} \|a_t g(t)\|_{\Hm} \\
	& \leq C t^{-n/2} \|v\|_{H^1(\Omega_t)} \|a_t g(t)\|_{\Hm}.
\end{align*}
Since $v$ and $\nabla v$ are bounded, we have $\|v\|_{H^1(\Omega_t)} \leq C t^{n/2}$ for some constant $C$ (which depends on $v$), hence
\[
	t^{-n/2} \|v\|_{H^1(\Omega_t)} \|a_t g(t)\|_{\Hm} \leq C \|a_t g(t)\|_{\Hm} \leq C t^p,
\]
which tends to 0 as $t \to 0$ because $p>0$. Therefore, taking the limit of \eqref{Greenab2} as $a \to 0^+$, we obtain
\[
	\int_{\Omega_b} \nabla u \cdot \nabla v = \int_{\Omega_{b}} F(\cdot,u) v
\]
and so $u$ is a weak solution on $\Omega_b$. This completes the first half of the proof.

Next, assume that $u \in H^1(\Omega_{T})$ is a weak solution to \eqref{eqn:PDE}, and let $(f,g)$ denote the associated solution to \eqref{eqn:ODE} on $(0,T)$, which exists by Theorem \ref{thm:local}. Using Lemma \ref{lem:Sobolev} and the boundedness of the Sobolev trace map $H^1(\Omega) \to \Hp$, we obtain
\begin{align}\label{eqn:fd}
	\|f(t)\|_{\Hp} \leq C \|u \circ \varphi_t\|_{H^1(\Omega)} \leq C t^{-n/2} \|u\|_{H^1(\Omega_t)}.
\end{align}
Elliptic regularity implies $u \in H^2(\Omega_t)$ for any $t < T$, so both $u$ and $\nabla u$ are contained in $H^1(\Omega_t)$.

For $n>2$, the Sobolev embedding theorem implies $u \in L^{2n/(n-2)}(\Omega_t)$, hence $u^2 \in L^{n/(n-2)}(\Omega_t)$. H\"older's inequality then yields
\begin{align*}
	\int_{\Omega_t} u^2 
	& \leq \left\| u^2 \right\|_{ L^{n/(n-2)}(\Omega_t)} \| 1 \|_{L^{n/2}(\Omega_t)} \\
	&= \left( \int_{\Omega_t} |u|^{2n/(n-2)} \right)^{(n-2)/n} |\Omega_t|^{2/n} \\
	&\leq C t^2  \left( \int_{\Omega_t} |u|^{2n/(n-2)} \right)^{(n-2)/n}.
\end{align*}
In the last line we have used the fact that $|\Omega_t| \leq Ct^n$, which can be obtained by choosing $u=1$ in Lemma~\ref{lem:Sobolev}. Similarly estimating the integral of $|\nabla u|^2$ over $\Omega_t$ and then combining with \eqref{eqn:fd}, we see that
\[
	t^{n/2-1} \|f(t)\|_{\Hp} \leq C \left[\left( \int_{\Omega_t} |u|^{2n/(n-2)} \right)^{(n-2)/2n} + \left( \int_{\Omega_t} |\nabla u|^{2n/(n-2)} \right)^{(n-2)/2n}\right].
\]
By the absolute continuity of the Lebesgue integral, we see that the right-hand side tends to 0 as $t \to 0$. This verifies the first term in \eqref{fgdecay}.

For the second term in \eqref{fgdecay}, we let $v \in H^1(\Omega)$, and calculate
\begin{align*}
	\int_\pO g(t) v a_t \, d\mu 
	&= \int_{\pOt} \frac{\p u}{\p \nu} (v \circ \varphi_t^{-1}) \,d\mu_t \\
	&= \int_{\Omega_t} \left[ \nabla u \cdot \nabla (v \circ \varphi_t^{-1}) - F(\cdot,u) (v \circ \varphi_t^{-1}) \right] \\
	&\leq \|\nabla u\|_{L^2(\Omega_t)} \|\nabla (v \circ \varphi_t^{-1})\|_{L^2(\Omega_t)} + 
	\| F(\cdot,u) \|_{L^2(\Omega_t)} \| v \circ \varphi_t^{-1}\|_{L^2(\Omega_t)} \\
	&\leq C \left( t^{n/2-1} \|\nabla u\|_{L^2(\Omega_t)} \|\nabla v \|_{L^2(\Omega)}
	+ t^{n/2} \| F(\cdot,u) \|_{L^2(\Omega_t)} \| v \|_{L^2(\Omega)} \right) \\
	&\leq C \left( t^{n/2-1} \|\nabla u\|_{L^2(\Omega_t)}  + t^{n/2} \| F(\cdot,u) \|_{L^2(\Omega_t)} \right) \| v \|_{H^1(\Omega)}
\end{align*}
where we used the fact that $u$ is a weak solution in the second line, and Lemma \ref{lem:Sobolev} in the penultimate line. It follows that
\begin{align}\label{eqn:gd}
	t^{-n/2} \|a_t g(t)\|_{\Hm} \leq C \left( t^{-1} \|\nabla u\|_{L^2(\Omega_t)} + \| F(\cdot,u) \|_{L^2(\Omega_t)} \right),
\end{align}
so we just need to show that the right-hand side vanishes in the $t=0$ limit.

Applying the Sobolev embedding theorem to $\nabla u \in H^1(\Omega_t)$, as was done for $u$ above, we obtain
\[
	t^{-1} \|\nabla u\|_{L^2(\Omega_t)} \leq C \left( \int_{\Omega_t} |\nabla u|^{2n/(n-2)} \right)^{(n-2)/2n} \to 0.
\]
For the remaining term in \eqref{eqn:gd} we simply observe that $F(\cdot,u) \in L^2(\Omega_t)$ for each $t$, and so
\[
	\int_{\Omega_t} |F(\cdot,u)|^2 \to 0
\]
as $t \to 0$. This establishes \eqref{fgdecay}, and thus completes the proof of Theorem \ref{thm:global} in the case $n>2$.

For the case $n=2$, we return to \eqref{eqn:fd}, with $u \in H^2(\Omega_t)$ for any $t < T$. Now the Sobolev embedding theorem implies $u$ and $\nabla u$ are contained in $L^q(\Omega_t)$ for any $2 \leq q < \infty$; see, for instance \cite[Corollary 9.14]{B11}. We then compute
\begin{align*}
	\int_{\Omega_t} u^2 
	& \leq \left\| u^2 \right\|_{ L^{q/2}(\Omega_t)} \| 1 \|_{L^{q/(q-2)}(\Omega_t)} \\
	&= \left( \int_{\Omega_t} |u|^{q} \right)^{2/q} |\Omega_t|^{(q-2)/q} \\
	&\leq C t^{2(q-2)/q} \left( \int_{\Omega_t} |u|^q \right)^{2/q},
\end{align*}
and similarly for $\nabla u$, to obtain
\[
	t^{2/q} \|f(t)\|_{\Hp} \leq C \left[\left( \int_{\Omega_t} |u|^q \right)^{1/q} + \left( \int_{\Omega_t} |\nabla u|^q \right)^{1/q}\right].
\]
The right-hand side tends to 0 as $t \to 0$, so we obtain the first term in \eqref{fgdecay2D} with $p = \frac2q \in (0,1]$. For the second term in \eqref{fgdecay2D} we use \eqref{eqn:gd} with $n=2$ to obtain
\begin{align*}
	t^{-1} \|a_t g(t)\|_{\Hm} \leq C \left( t^{-1} \|\nabla u\|_{L^2(\Omega_t)} + \| F(\cdot,u) \|_{L^2(\Omega_t)} \right),
\end{align*}
and then observe that
\[
	\int_{\Omega_t} |\nabla u|^2 
	\leq C t^{2(q-2)/q} \left( \int_{\Omega_t} |\nabla u|^q \right)^{2/q},
\]
hence
\[
	t^{2/q-1} \|a_t g(t)\|_{\Hm} \leq C \left[ \left( \int_{\Omega_t} |\nabla u|^q \right)^{1/q} + t^{q/2} \| F(\cdot,u) \|_{L^2(\Omega_t)} \right].
\]
This shows that the second term in \eqref{fgdecay2D} tends to zero for any $p = 1 - \frac2q \in [0,1)$, and thus completes the proof of Theorem \ref{thm:global}.

\section{Exponential dichotomies}\label{sec:dichotomy}
In this final section we discuss exponential dichotomies for the linearization of \eqref{eqn:PDE}. We first define what is meant by an exponential dichotomy for the dynamical system \eqref{eqn:RODE} corresponding to the linearized PDE \eqref{eqn:LPDE}. Next, we explore some consequences of this idea. In particular, we prove that, if a dichotomy exists, then the unstable subspace coincides with the space of Cauchy data for the linear PDE. 

The linear dynamical system \eqref{eqn:RODE} does not satisfy the sufficient conditions given in \cite{PSS97} for the existence of an exponential dichotomy except when the domain is radial, i.e. $\Omega_t = \{x : |x| < t\}$. This case is studied in detail in \cite{BCJLS}, where the existence of an exponential dichotomy is proven. The general case will be the subject of future investigations. For now we simply motivate the concept of an exponential dichotomy by describing some of its consequences for elliptic boundary value problems.

We conclude by giving a dynamical interpretation of an eigenvalue problem, observing that eigenvalues correspond to nontrivial intersections of the unstable subspace with a fixed subspace of $\cH$ that encodes the boundary conditions.

\subsection{Dichotomy subspaces}
Suppose that $\widehat u$ solves \eqref{eqn:PDE}, with the linearized equation $\Delta u + D_u F(x,\widehat u) u = 0$. More generally, consider
\begin{align}\label{eqn:LPDE}
	\Delta u = V(x) u.
\end{align}
The linearized equation, as well as the eigenvalue equation $\Delta u + D_u F(x,\widehat u) u = \lambda u$, can be written in this form. This is a special case of \eqref{eqn:PDE}, with $F(x,u) = -V(x) u$, and hence is equivalent, in the sense of Theorems~\ref{thm:local} and \ref{thm:global}, to the linear system
\begin{align}\label{eqn:LODE}
	\frac{d}{dt} \begin{pmatrix} f \\ g \end{pmatrix} = 
	\begin{pmatrix} T_t & \sigma_t \\ \sigma_t V_t - L_t & T_t - \sigma_t H_t \end{pmatrix}
	\begin{pmatrix} f \\ g \end{pmatrix},
\end{align}
where we have defined $V_t =V \circ \varphi_t \big|_{\pO} \colon \pO \to \bbR$.

The system \eqref{eqn:LODE} is ill-posed, in the sense that solutions do not necessarily exist for given initial data. In \cite{PSS97} it was shown that the corresponding equation \eqref{eqn:cylinder} for the channel problem admits an exponential dichotomy. That is, $\cH = \Hp \oplus \Hm$ splits into two subspaces, both infinite-dimensional, on which the system admits solutions forwards and backwards in time, respectively. However, \eqref{eqn:LODE} does not admit an exponential dichotomy because, as seen in the example in Section~\ref{sec:harmonic}, the solutions decay or grow polynomially, rather than exponentially, in $t$. We will instead consider dichotomies for a suitably reparameterized and rescaled version of the system. 

We let $t = e^\tau$, and then define
\begin{align}\label{scaling}
	\wtf(\tau) = e^{\alpha\tau} f(e^\tau), \quad \wtg(\tau) = e^{(1+\alpha)\tau} g(e^\tau)
\end{align}
for some constant $\alpha$ to be determined. The scaling parameter $\alpha$ will be used to ensure that the asymptotic operator, i.e. the limit of the right-hand side of \eqref{eqn:RODE} as $t \to 0$, does not have spectrum on the imaginary axis. A direct computation shows that if $(f,g)$ solves \eqref{eqn:LODE}, then
\begin{align}\label{eqn:RODE}
	\frac{d}{d\tau} \begin{pmatrix} \wtf \\ \wtg \end{pmatrix} =
	\begin{pmatrix} \alpha + t T_t & \sigma_t \\ t^2 (\sigma_t V_t - L_t) & 1 + \alpha + t (T_t - \sigma_t H_t) \end{pmatrix}
	\begin{pmatrix} \wtf \\ \wtg \end{pmatrix}.
\end{align}
For convenience we set $\wth =(\wtf,\wtg)$.

\begin{definition}\label{thm:dichotomy}
The system \eqref{eqn:RODE} is said to admit an exponential dichotomy on the half line $(-\infty,0]$ if there exists a continuous family of projections $P^u\colon (-\infty,0] \to B(\cH)$ and constants $K, \eta^u, \eta^s > 0$ such that, for every $\tau_0 \leq 0$ and $z \in \cH$ there exists a solution $\wth^u(\tau;\tau_0,z)$ of \eqref{eqn:RODE}, defined for $\tau \leq \tau_0$, such that
\begin{itemize}
	\item $\wth^u(\tau_0;\tau_0,z) = P^u(\tau_0) z$,
	\item $\| \wth^u(\tau; \tau_0,z) \|_{\cH} \leq K e^{\eta^u(\tau-\tau_0)} \|z\|_\cH$ for all  $\tau \leq \tau_0$,
	\item $\wth^u(\tau; \tau_0,z) \in R(P^u(\tau))$ for all $\tau \leq \tau_0$,
\end{itemize}
and a solution $\wth^s(\tau;\tau_0,z)$ of \eqref{eqn:RODE}, defined for $\tau_0 \leq \tau \leq 0$, such that
\begin{itemize}
	\item $\wth^s(\tau_0;\tau_0,z) = P^s(\tau_0) z$,
	\item $\| \wth^s(\tau; \tau_0,z) \|_{\cH} \leq K e^{\eta^s(\tau_0-\tau)} \|z\|_\cH$ for all $\tau_0 \leq \tau \leq 0$,
	\item $\wth^s(\tau; \tau_0,z) \in R(P^s(\tau))$ for all $\tau_0 \leq \tau \leq 0$,
\end{itemize}
where $P^s(\tau) = I - P^u(\tau)$.
\end{definition}

In other words, for any terminal data in the range of $P^u(\tau_0)$, the system can be solved backwards in $\tau$, with the solution decaying exponentially as $\tau \to -\infty$, and similarly for initial data in the range of $P^s(\tau_0)$.

For any $\tau \leq 0$ we define the stable and unstable subspaces
\begin{align}
	\widetilde E^s(\tau) = R(P^s(\tau)), \quad 	\widetilde E^u(\tau) = R(P^u(\tau)).
\end{align}
Undoing the scaling \eqref{scaling} and the change of variables $t = e^\tau$, we define
\begin{align}
	E^u(t) = \left\{ \left( t^{-\alpha} \wtf(\log t), t^{-1 - \alpha} \wtg(\log t) \right) : \big(\wtf(\log t), \wtg(\log t) \big) \in \widetilde E^u(\log t) \right\}
\end{align}
for $t>0$, and similarly for $E^s(t)$. Thus for any $t_0 \in (0,1]$ and $(f_0,g_0) \in E^u(t_0)$ there exists a solution $(f(t), g(t))$ to \eqref{eqn:LODE}, defined for $0 < t \leq t_0$, with $(f(t_0), g(t_0)) = (f_0, g_0)$ and
\begin{align}\label{unstabledecay}
	\big\| (f(t), t g(t)) \big\|_{\cH} \leq K \left(\frac{t}{t_0}\right)^{\eta^u-\alpha} \big\| (f_0, t_0 g_0) \big\|_{\cH}.
\end{align}
The implications of this estimate for the corresponding solution $u$ to the linear PDE \eqref{eqn:LPDE} depend on the scaling parameter $\alpha$ and its relation to the growth and decay rates $\eta^u$ and $\eta^s$. This will be explored in detail in the following section.

\subsection{Relation to the Cauchy data space}
Assuming the existence of an exponential dichotomy, we now prove that for an appropriate choice of $\alpha$ the unstable subspace $E^u(t)$ corresponds to the space of Cauchy data of weak solutions to \eqref{eqn:LPDE} on $\Omega_t$. For $t>0$ let
\[
	K_t = \{u \in H^1(\Omega_t) : \Delta u = V(x) u \text{ on } \Omega_t \},
\]
where the equality $\Delta u = V(x) u$ on $\Omega_t$ is meant in a distributional sense. Since $K_t$ is a subset of $\{u \in H^1(\Omega_t) : \Delta u \in L^2(\Omega_t)\}$, the trace map $\Tr_t$ (defined in \eqref{def:tr}) can be applied, and we have $\Tr_t u \in \Hh$ for each $u \in K_t$. We thus define
\[
	\Tr_t(K_t) = \{\Tr_t u : u \in K_t\} \subset \cH.
\]

\begin{theorem}\label{thm:unstable}
Assume that \eqref{eqn:RODE} admits an exponential dichotomy on $(-\infty,0]$, and that $V$ is of class $C^{\lfloor n/2 \rfloor,1}$ in a neighborhood of the origin. If $\alpha < \eta^u + \frac{n}{2} - 1$, then $E^u(t) \subseteq \Tr_t(K_t)$ for each $t > 0$. If $\alpha > - \eta^s$, then $ \Tr_t(K_t) \subseteq E^u(t)$ for each $t > 0$. Therefore, the two spaces coincide if
\begin{align}\label{alpharange}
	-\eta^s < \alpha < \eta^u + \frac{n}{2} - 1.
\end{align}
\end{theorem}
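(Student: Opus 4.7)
The plan is to prove the two inclusions separately, using in both directions the equivalence between weak solutions of $\Delta u = Vu$ and solutions of \eqref{eqn:LODE} furnished by Theorems \ref{thm:local} and \ref{thm:global}, together with the growth/decay estimates provided by the dichotomy and by the linear version of Theorem \ref{thm:global}.

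For the forward inclusion $E^u(t_0) \subseteq \Tr_{t_0}(K_{t_0})$ I start with $(f_0,g_0) \in E^u(t_0)$ and set $\tau_0 = \log t_0$. The dichotomy produces a backward solution of the rescaled system \eqref{eqn:RODE} on $(-\infty,\tau_0]$ which, after undoing the scaling and reparametrisation, gives a solution $(f,g)$ of \eqref{eqn:LODE} on $(0,t_0]$ satisfying $\|f(t)\|_\Hp + t\|g(t)\|_\Hm \leq C\,t^{\eta^u-\alpha}$. Theorem \ref{thm:local} then supplies a weak solution $u$ of $\Delta u = Vu$ on the punctured ball $\Omega_{0,t_0}$ realising $(f,g)$ as its Cauchy data. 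To promote $u$ to a weak solution on the whole ball $\Omega_{t_0}$ (hence to an element of $K_{t_0}$), I apply Theorem \ref{thm:global}: its hypothesis \eqref{fgbound} requires an exponent $p \in (0,n/2)$ for which $t^p\|f\|_\Hp$ and $t^{n-p-1}\|g\|_\Hm$ are bounded near $t=0$, and the polynomial bounds above reduce this to $\alpha - \eta^u \leq p \leq n-2+\eta^u-\alpha$. The non-degeneracy of this interval is precisely the hypothesis $\alpha < \eta^u + n/2 - 1$, and under it one easily checks that the interval meets $(0,n/2)$, so a suitable $p$ can be chosen; Theorem \ref{thm:global} then yields $u \in K_{t_0}$ with $\Tr_{t_0} u = (f_0,g_0)$.

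For the reverse inclusion $\Tr_{t_0}(K_{t_0}) \subseteq E^u(t_0)$, I start with $u \in K_{t_0}$ and let $(f,g)$ be its trace, which by Theorem \ref{thm:local} solves \eqref{eqn:LODE} on $(0,t_0)$. Since the equation is linear, the improved statement of Lemma \ref{lemma:lineardecay} (which is where the $C^{\lfloor n/2 \rfloor,1}$ regularity of $V$ enters) gives that $\|f(t)\|_\Hp$ and $\|g(t)\|_\Hm$ remain bounded as $t \to 0$. Passing to the rescaled variables, this reads $\|\wth(\tau)\|_\cH = O(e^{\alpha\tau})$ as $\tau \to -\infty$. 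Decompose $\wth(\tau_0) = z^u + z^s$ with $z^u = P^u(\tau_0)\wth(\tau_0)$, $z^s = P^s(\tau_0)\wth(\tau_0)$; the unstable contribution $\wth^u(\cdot;\tau_0,z^u)$ decays backwards at rate $\eta^u$ by the dichotomy, and the difference $\wth - \wth^u$ extends the stable piece backwards with value $z^s$ at $\tau_0$. A non-trivial stable datum, continued backwards, grows like $e^{\eta^s(\tau_0-\tau)}$, so comparing with $\|\wth(\tau)\|_\cH = O(e^{\alpha\tau})$ and using $\alpha > -\eta^s$, the assumed upper bound on $\wth$ is too slow to accommodate any non-zero $z^s$ as $\tau\to-\infty$. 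Hence $z^s = 0$, so $\wth(\tau_0) \in \widetilde E^u(\tau_0)$, equivalently $(f_0,g_0) \in E^u(t_0)$.

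The main obstacle is the concluding step of the reverse inclusion: Definition \ref{thm:dichotomy} explicitly supplies only \emph{upper} bounds on the unstable and stable components of the dichotomy, whereas the argument above also needs a matching \emph{lower} bound on the backward growth of a non-zero stable datum. This is a standard feature of exponential dichotomies (cf.\ \cite{PSS97,Gallay93,elbialy12}) and can be obtained either by slightly strengthening the dichotomy to include invertibility of the stable evolution, or by invoking the usual variation-of-constants characterization of $\widetilde E^u(\tau_0)$ as the set of data whose backward orbits grow at exponential rate strictly exceeding $-\eta^s$; pinning down the correct abstract formulation is the only delicate point, after which the argument closes cleanly.
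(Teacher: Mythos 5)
Your two inclusions are laid out exactly as the paper does them, and the forward inclusion is essentially complete: from \eqref{unstabledecay} you get $\|f(t)\|_{\Hp} + t\|g(t)\|_{\Hm}\leq C\,t^{\eta^u-\alpha}$, the hypothesis $\alpha<\eta^u+\tfrac{n}{2}-1$ guarantees the interval $[\alpha-\eta^u,\,n-2+\eta^u-\alpha]$ meets $(0,\tfrac{n}{2})$, and Theorem~\ref{thm:global} then extends the weak solution from $\Omega_{0,t_0}$ to $\Omega_{t_0}$. The reverse inclusion's skeleton (take traces, invoke Lemma~\ref{lemma:lineardecay} to get $\|\wth(\tau)\|_{\cH}=O(e^{\alpha\tau})$, and then argue the stable component of $\wth(\tau_0)$ is zero) is also exactly the paper's, isolated as Lemma~\ref{inclusion}.

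The ``main obstacle'' you flag at the end, however, is not actually there, and the two fixes you suggest are both heavier than necessary. You frame the final step as needing a \emph{lower} bound on the backward growth of a non-zero stable datum, but the paper's proof of Lemma~\ref{inclusion} closes the argument using only the \emph{upper} bounds already contained in Definition~\ref{thm:dichotomy}. The trick is to work forward from an arbitrarily distant time $\tau_*<\tau_0$ rather than backward from $\tau_0$. Writing $\wth^s=\wth-\wth^u$, the subtraction and the unstable decay give
\[
\|\wth^s(\tau_*)\|\leq C e^{\alpha(\tau_*-\tau_0)}+K\|z\|\,e^{\eta^u(\tau_*-\tau_0)}.
\]
Since $\wth^s(\tau_*)\in R(P^s(\tau_*))$, the forward stable estimate (an \emph{upper} bound) gives $\|\wth^s(\tau_0)\|\leq K e^{\eta^s(\tau_*-\tau_0)}\|\wth^s(\tau_*)\|$, and combining the two displays, the right-hand side tends to $0$ as $\tau_*\to-\infty$ because $\eta^s+\alpha>0$ and $\eta^s+\eta^u>0$. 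Hence $\wth^s(\tau_0)=0$ without any lower bounds on the stable evolution. The one point your ``obstacle'' paragraph does correctly sense is that this step uses uniqueness of the forward-in-time solution with initial data $\wth^s(\tau_*)\in R(P^s(\tau_*))$, so that the solution produced by the dichotomy coincides with $\wth^s$ itself; the paper invokes this uniqueness, and it is the one place where Definition~\ref{thm:dichotomy} is being read as implicitly providing a (unique) stable evolution operator rather than just existence.
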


To prove that $E^u(t) \subseteq \Tr_t(K_t)$, we must show that any solution $(f,g)$ to \eqref{eqn:LODE} having sufficient decay at $t=0$ corresponds to a solution $u$ to \eqref{eqn:LPDE}. Thus suppose $(f(t_0),g(t_0)) \in E^u(t_0)$ for some $t_0 > 0$. It follows from \eqref{unstabledecay} that
\[
	\|f(t)\|_{\Hp} + t \|g(t)\|_{\Hm} \leq C t^{\eta^u-\alpha}
\]
for all $t \leq t_0$. The inequality $\alpha < \frac{n}{2} - 1 + \eta^u$ implies $\alpha - \eta^u \leq n-2 + \eta^u - \alpha$, $\alpha - \eta^u < \frac{n}{2}$ and $n-2 + \eta^u - \alpha > 0$. Therefore, there exists a number
\[
	p \in \left(0, \frac{n}{2}\right) \cap [\alpha - \eta^u, n-2 + \eta^u - \alpha].
\]
For this choice of $p$ we have that
\[
	t^p \|f(t)\|_{\Hp} + t^{n - p- 1} \|g(t)\|_{\Hm}
\]
is bounded near $t=0$. It follows from Theorem \ref{thm:global} that $(f(t),g(t)) = \Tr_t u$ for some $u \in K_t$, completing the proof that $E^u(t_0) \subseteq \Tr_{t_0}(K_{t_0})$.

To prove the reverse inclusion, $ \Tr_t(K_t) \subseteq E^u(t)$, we use the fact that any solution to \eqref{eqn:LODE} having sufficient decay at $t=0$ is necessarily contained in the unstable subspace.

\begin{lemma}\label{inclusion}
Suppose $\wth$ is a solution to \eqref{eqn:RODE} and satisfies the estimate $\|\wth(\tau)\| \leq C e^{\alpha(\tau - \tau_0)}$ for all $\tau \leq \tau_0$, for some $\alpha > -\eta^s$. Then $\wth(\tau_0) \in \widetilde E^u(\tau_0)$.
\end{lemma}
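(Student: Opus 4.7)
The plan is to show that $w := P^s(\tau_0)\wth(\tau_0) = 0$, which, since $P^u(\tau_0) + P^s(\tau_0) = I$, is equivalent to $\wth(\tau_0) \in R(P^u(\tau_0)) = \widetilde E^u(\tau_0)$. Heuristically, the hypothesis $\alpha > -\eta^s$ says that the given backward bound on $\wth$ grows strictly slower than the stable backward rate $\eta^s$, so no nontrivial stable component can be accommodated.

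The first step is to establish, for each $\tau \le \tau_0$, the identification
\[
w \;=\; \wth^s\bigl(\tau_0;\,\tau,\,P^s(\tau)\wth(\tau)\bigr),
\]
i.e., the stable component of $\wth$ at $\tau_0$ arises as the forward propagation by $\wth^s$ of its value at the earlier time $\tau$. Once this is in hand, the dichotomy decay bound together with the uniform estimate $M := \sup_{\tau \le 0}\|P^s(\tau)\|_{B(\cH)} < \infty$ (which follows from continuity of the projection family and the dichotomy estimates) and the hypothesis $\|\wth(\tau)\|_\cH \le C e^{\alpha(\tau - \tau_0)}$ yield
\[
\|w\|_\cH \;\le\; K\, e^{\eta^s(\tau - \tau_0)} \|P^s(\tau)\wth(\tau)\|_\cH \;\le\; KMC\, e^{(\alpha + \eta^s)(\tau - \tau_0)}.
\]
Since $\alpha + \eta^s > 0$ and $\tau - \tau_0 \to -\infty$ as $\tau \to -\infty$, the right-hand side tends to zero, forcing $w = 0$.

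The main obstacle is justifying the identification in the display above, because the ill-posedness of \eqref{eqn:RODE} precludes a globally defined evolution operator on $\cH$, and with it the usual commutation $P^s \circ \Phi = \Phi \circ P^s$. I would work directly with the remainder
\[
r(\tau_1) \;:=\; \wth(\tau_1) - \wth^s\bigl(\tau_1;\,\tau,\,P^s(\tau)\wth(\tau)\bigr), \qquad \tau_1 \in [\tau,\tau_0],
\]
which is a solution of \eqref{eqn:RODE} starting at $r(\tau) = P^u(\tau)\wth(\tau) \in R(P^u(\tau))$. The required identity is equivalent to $r(\tau_0) \in R(P^u(\tau_0))$; this is the genuine technical content of the lemma and follows from uniqueness of unstable propagation together with the invariance built into Definition \ref{thm:dichotomy}.
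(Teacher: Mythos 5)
Your overall strategy is the same as the paper's: isolate the stable component at $\tau_0$, apply the stable decay estimate over a long backward interval, and use $\alpha + \eta^s > 0$ to conclude that it must vanish. But you build the stable part directly as the dichotomy's forward solution $\wth^s(\cdot;\tau,P^s(\tau)\wth(\tau))$ and then try to argue that it reaches $P^s(\tau_0)\wth(\tau_0)$ at time $\tau_0$, whereas the paper avoids that identification entirely by constructing the \emph{unstable} part: it sets $z=\wth(\tau_0)$, takes $\wth^u(\tau) := \wth^u(\tau;\tau_0,z)$ (defined backward by Definition~\ref{thm:dichotomy}), and then defines $\wth^s := \wth - \wth^u$, so that $\wth^s(\tau_0) = P^s(\tau_0)\wth(\tau_0)$ holds by construction.

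The gap is in your claimed justification of the key identity, and it is a real one. You reduce it to showing that the remainder $r$, a genuine solution on $[\tau,\tau_0]$ with $r(\tau)\in R(P^u(\tau))$, satisfies $r(\tau_0)\in R(P^u(\tau_0))$, and you assert this follows from ``the invariance built into Definition~\ref{thm:dichotomy}.'' But the invariance in Definition~\ref{thm:dichotomy} runs the other way: it says $\wth^u(\tau;\tau_0,z)\in R(P^u(\tau))$ for $\tau \le \tau_0$, i.e.\ the unstable range is \emph{backward} invariant (and the stable range is \emph{forward} invariant). What you need is \emph{forward} invariance of $R(P^u)$ under arbitrary solutions — that is not what the definition gives, and it does not follow from it without further structure (commutation of $P^{s,u}$ with the evolution, or an equivalent uniqueness/invariance package). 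The paper's decomposition is precisely the device that sidesteps this: $\wth^u$ is built by propagating backward from $\tau_0$, which automatically lands it in $R(P^u(\tau))$, and the only remaining input is forward uniqueness of the stable propagation from $\tau_*$, which the paper invokes explicitly. If you want to keep your route, you would need to establish the forward invariance of $R(P^u)$ as a separate lemma rather than cite the definition for it; alternatively, adopt the paper's $\wth^u$/$\wth^s$ split. (Your use of $M := \sup_{\tau\le 0}\|P^s(\tau)\|<\infty$ is fine: taking $\tau=\tau_0$ in the stable estimate gives $\|P^s(\tau_0)z\|\le K\|z\|$.)
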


That is, any solution $\wth(\tau)$ that does not blow up too rapidly as $\tau \to -\infty$ must be contained in the unstable subspace, so it in fact decays with rate $\eta^u$.

\begin{proof}
Choosing  $z = h(\tau_0)$ in Definition \ref{thm:dichotomy}, there exists a solution $\wth^u$ to \eqref{eqn:RODE} with $\wth^u(\tau_0) = P^u(\tau_0) z$, satisfying the estimate $\|\wth^u(\tau)\| \leq K e^{\eta^u(\tau - \tau_0)}\|z\|$ for $\tau \leq \tau_0$. Now define $\wth^s(\tau) = h(\tau) - \wth^u(\tau)$. It follows that $\wth^s$ is also a solution to \eqref{eqn:RODE} for $\tau \leq \tau_0$, with
\begin{align}\label{hest1}
	\|\wth^s(\tau)\| \leq C e^{\alpha(\tau - \tau_0)} + K \|z\| e^{\eta^u(\tau - \tau_0)}.
\end{align}
Moreover, since $\wth^u(\tau) \in R(P^u(\tau))$, we have $\wth^s(\tau) = (I - P^u(\tau)) \wth(\tau) \in R(P^s(\tau))$.
Now let $\tau_* < \tau_0$. Since $\wth^s(\tau)$ solves \eqref{eqn:RODE} for $\tau \geq \tau_*$, and has initial condition $\wth^s(\tau_*)$, it must be the unique forward-in-time solution whose existence is guaranteed by the exponential dichotomy. Therefore it satisfies the estimate
\[
	\|\wth^s(\tau)\| \leq K e^{\eta^s(\tau_* - \tau)} \|\wth^s(\tau_*)\| 
\]
for $\tau \geq \tau_*$. Using \eqref{hest1} to bound $\|\wth^s(\tau_*)\|$, we have
\[
	\|\wth^s(\tau_0)\| \leq K e^{\eta^s(\tau_* - \tau_0)}  \left(C e^{\alpha(\tau_* - \tau_0)} + K \|z\| e^{\eta^u(\tau_* - \tau_0)}\right).
\]
Taking the limit $\tau_* \to -\infty$ and using the fact that $\eta^s + \alpha > 0$, we obtain $\wth^s(\tau_0) = 0$.
\end{proof}

We also require the following improved version of Theorem \ref{thm:global}.

\begin{lemma}\label{lemma:lineardecay}
Suppose $u$ solves \eqref{eqn:LPDE} on $\Omega_T$, with the potential $V$ of class $C^{\lfloor n/2 \rfloor,1}$ in a neighborhood of the origin. Then $\|f(t)\|_{\Hp}$ and $\|g(t)\|_{\Hm}$ are bounded near $t=0$.
\end{lemma}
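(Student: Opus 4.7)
The strategy is to upgrade the weak solution $u$ to a classical $C^1$ solution near the origin by elliptic bootstrapping, and then transfer this pointwise regularity to uniform bounds on the pulled-back Cauchy data $(f(t),g(t))$ via the derivative estimates for $\varphi_t$ established in Section~\ref{sec:geo}.

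First, I would run an interior elliptic regularity bootstrap. Starting from $u \in H^1(\Omega_T)$ solving $\Delta u = Vu$, and using the fact that multiplication by a $C^{k,1}$ function preserves $H^s$ for $s \leq k+1$, I would iteratively apply standard interior $H^{s+2}$ regularity for the Laplacian (e.g.\ \cite[Theorem 4.16]{M00}) to conclude that $u \in H^{\lfloor n/2\rfloor + 2}_{\mathrm{loc}}$ in a neighborhood $U$ of the origin, using the hypothesis $V \in C^{\lfloor n/2\rfloor,1}(U)$. Since $\lfloor n/2\rfloor + 2 > n/2 + 1$, the Sobolev embedding theorem gives $u \in C^1(\overline{U'})$ for some smaller neighborhood $U' \subset U$ of the origin. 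In particular, there exists $\delta > 0$ and a constant $M$ with
\[
\|u\|_{L^\infty(\Omega_\delta)} + \|\nabla u\|_{L^\infty(\Omega_\delta)} \leq M.
\]

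Next, I transfer this to uniform bounds on $f(t)$ and $g(t)$ for small $t$. For $y \in \pO$ and $t$ small enough that $\varphi_t(\pO) \subset \Omega_\delta$, we have $|f(t)(y)| = |u(\varphi_t(y))| \leq M$, so $\|f(t)\|_{L^\infty(\pO)} \leq M$. For the tangential derivative, writing $\nabla^{\pO}(u \circ \varphi_t) = \big(D\varphi_t\big|_{\pO}\big)^T (\nabla^{\pOt} u)\circ \varphi_t$ and applying Lemma~\ref{lem:Jacobian}, I get $\|D\varphi_t\| \leq Ct$ uniformly, hence
\[
\|\nabla^{\pO} f(t)\|_{L^\infty(\pO)} \leq C t \,\|\nabla u\|_{L^\infty(\Omega_\delta)} \leq CtM.
\]
Therefore $f(t)$ is bounded in $C^1(\pO)$, and in particular in $H^1(\pO) \hookrightarrow H^{1/2}(\pO)$, uniformly in $t$. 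Similarly, $|g(t)(y)| = |(\p u/\p\nu)(\varphi_t(y))| \leq \|\nabla u\|_{L^\infty(\Omega_\delta)} \leq M$, so $g(t)$ is uniformly bounded in $L^\infty(\pO) \hookrightarrow L^2(\pO) \hookrightarrow H^{-1/2}(\pO)$.

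The only nontrivial step is the elliptic bootstrap: one must verify that multiplication by a $C^{k,1}$ function is indeed a bounded operator on $H^s$ for the relevant range of $s$, so the bootstrap iterations close. This is a standard product estimate, and the precise hypothesis $V \in C^{\lfloor n/2\rfloor,1}$ is tailored to allow exactly $\lfloor n/2\rfloor$ bootstrap steps past the initial $H^1$ regularity, landing in the first Sobolev space that embeds in $C^1$. Once that is in place, the conclusion is immediate from the uniform $L^\infty$-bounds on $u$ and $\nabla u$ combined with the geometric estimates of Section~\ref{sec:geo}. No decay is asserted; only boundedness, which is exactly what the scaling $D\varphi_t = O(t)$ delivers at worst.
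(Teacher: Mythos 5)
Your proof is correct and follows essentially the same strategy as the paper: use the $C^{\lfloor n/2\rfloor,1}$ regularity of $V$ to bootstrap $u$ into $H^{2+\lfloor n/2\rfloor}$, embed into $C^1$ near the origin, and transfer the resulting uniform $L^\infty$ bounds on $u$ and $\nabla u$ to bounds on the Cauchy data. The only cosmetic difference is in the last step: the paper feeds $\|u\|_{H^1(\Omega_t)} \leq C t^{n/2}$ into the previously derived estimates \eqref{eqn:fd} and \eqref{eqn:gd}, whereas you bound $f(t)$ in $C^1(\pO)$ and $g(t)$ in $L^\infty(\pO)$ directly via Lemma~\ref{lem:Jacobian} and then invoke the embeddings $H^1(\pO)\hookrightarrow\Hp$ and $L^2(\pO)\hookrightarrow\Hm$ --- equally valid and, if anything, a touch more self-contained.
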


\begin{proof}
The smoothness of $V$ allows us to use elliptic regularity (for instance \cite[Theorem 4.16]{M00}) to conclude that $u \in H^{2 + \lfloor n/2 \rfloor }(\Omega_t)$ for sufficiently small $t$. It follows from the Sobolev inequalities that $u \in C^{1,\gamma}(\Omega_t)$ for some $\gamma \in (0,1)$. In particular, $u$ and $\nabla u$ are uniformly bounded in a neighborhood of the origin, and so $\| u \|_{H^1(\Omega_t)} \leq C t^{n/2}$. The result now follows from estimates \eqref{eqn:fd} and \eqref{eqn:gd}.
\end{proof}

Now consider $\Tr_t u \in \Tr_t (K_t)$. We have that $\|f(t)\|_{\Hp}$ and $\|g(t)\|_{\Hm}$ are bounded as $t \to 0$. Therefore $\wth(\tau) = \big(e^{\alpha\tau} f(e^\tau), e^{(1+\alpha)\tau} g(e^\tau)\big)$ satisfies the bound $\|\wth(\tau)\|_\cH \leq C e^{\alpha\tau}$. Since $\alpha > -\eta^s$, Lemma \ref{inclusion} implies $\wth(\tau) \in \widetilde E^u(\tau)$, hence $h(t) = (f(t),g(t)) \in E^u(t)$. This completes the proof of Theorem \ref{thm:unstable}.

Note that the rates $\eta^u$ and $\eta^s$ depend implicitly on $\alpha$, as the latter parameter appears in the rescaled system of equations \eqref{eqn:RODE}. To verify \eqref{alpharange} one must therefore understand this dependence.

\begin{cor}\label{cor:alphagap}
Suppose $n>2$. If $0 < \alpha \leq \frac{n}{2} - 1$ and \eqref{eqn:RODE} has an exponential dichotomy with rates $\eta^{s,u} > 0$, then \eqref{alpharange} is satisfied, and hence $E^u(t) = \Tr_t(K_t)$ for each $t > 0$.
\end{cor}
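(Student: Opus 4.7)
The statement of Corollary \ref{cor:alphagap} is essentially an arithmetic verification of the hypotheses of Theorem \ref{thm:unstable}, so my plan is to check that the inequalities in \eqref{alpharange} follow automatically from $0 < \alpha \leq \tfrac{n}{2} - 1$ and $\eta^{u,s} > 0$, and then invoke the theorem directly.

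First I would handle the lower bound $-\eta^s < \alpha$. Since we assume $\eta^s > 0$, we have $-\eta^s < 0$, and since $\alpha > 0$ by hypothesis, we get $-\eta^s < 0 < \alpha$, which gives the left-hand inequality of \eqref{alpharange}. Next I would verify the upper bound $\alpha < \eta^u + \tfrac{n}{2} - 1$. From $\alpha \leq \tfrac{n}{2} - 1$ and $\eta^u > 0$ we immediately obtain $\alpha \leq \tfrac{n}{2} - 1 < \tfrac{n}{2} - 1 + \eta^u$, as required. The role of the assumption $n > 2$ is precisely that it renders the interval $(0, \tfrac{n}{2}-1]$ nonempty, so that a valid scaling parameter $\alpha$ actually exists; when $n=2$ this interval degenerates, which is the phenomenon flagged in Remark \ref{rem:2Dharmonic}.

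Having verified both parts of \eqref{alpharange}, the conclusion $E^u(t) = \Tr_t(K_t)$ for every $t > 0$ follows at once from Theorem \ref{thm:unstable}, provided $V$ is regular enough near the origin; this regularity is inherited from the standing hypothesis on $V$ in Theorem \ref{thm:unstable} and need not be reimposed.

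There is no real obstacle in this argument — the only conceptual point worth emphasizing is that the rates $\eta^u, \eta^s$ implicitly depend on $\alpha$ (as noted just before the statement of the corollary), but this dependence plays no role here: the conclusions are uniform in the sign of $\eta^{u,s}$ so long as they remain positive. Thus the corollary is best viewed as identifying an explicit, $\alpha$-independent window $(0, \tfrac{n}{2}-1]$ inside which the existence of any dichotomy with positive rates automatically yields the identification of the unstable subspace with the Cauchy data space.
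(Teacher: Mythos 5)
Your proposal is correct and coincides with the argument the paper intends: the corollary is a direct arithmetic consequence of Theorem \ref{thm:unstable}, checking that $-\eta^s < 0 < \alpha$ and $\alpha \leq \frac{n}{2}-1 < \frac{n}{2}-1+\eta^u$, and the paper provides no separate proof precisely because this verification is immediate. Your remark about the role of $n>2$ (making the window $(0,\tfrac{n}{2}-1]$ nonempty) correctly matches Remark~\ref{rem:n2}.
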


\begin{rem}\label{rem:n2}
When $n=2$ there is no $\alpha$ that satisfies this condition. This is not a shortcoming of the method of proof, but rather indicates a fundamental difference between the cases $n=2$ and $n>2$. This was seen earlier when studying harmonic functions on the plane. As observed above in Remark \ref{rem:2Dharmonic}, there is no choice of $\alpha$ for which $E^u(t) = \Tr_t(K_t)$.
\end{rem}

\subsection{Application to an eigenvalue problem}

Finally, we use Corollary \ref{cor:alphagap} to give a dynamical interpretation of the eigenvalue problem
\begin{align}\label{eigenvalue}
	-\Delta u + V u = \lambda u
\end{align}
with Dirichlet boundary conditions. To do so we define the \emph{Dirichlet subspace}
\begin{align}
	\cD = \{( 0,g) : g \in \Hm\} \subset \cH.
\end{align}

The following result is then an immediate consequence of Corollary \ref{cor:alphagap}.

\begin{cor}
Assuming the hypotheses of Corollary \ref{cor:alphagap}, $\lambda$ is an eigenvalue of the Dirichlet problem \eqref{eigenvalue} on $\Omega_t$ if and only if the unstable subspace $E^u(t)$ intersects the Dirichlet subspace $\cD$ nontrivially. Moreover, the geometric multiplicity of $\lambda$ equals $\dim \big(E^u(t) \cap \cD\big)$.
\end{cor}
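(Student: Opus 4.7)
The plan is to reduce the corollary to Corollary~\ref{cor:alphagap} by absorbing the eigenvalue parameter into the potential. Rewriting $-\Delta u + Vu = \lambda u$ as $\Delta u = V_\lambda u$ with $V_\lambda := V - \lambda$, the eigenvalue problem takes the form of \eqref{eqn:LPDE}. Since shifting $V$ by a constant preserves the regularity hypothesis $V_\lambda \in C^{\lfloor n/2 \rfloor,1}$, Corollary~\ref{cor:alphagap} applies (with $V$ replaced by $V_\lambda$) and yields
\[
	E^u(t) = \Tr_t(K_t^{\lambda}), \qquad K_t^{\lambda} := \{u \in H^1(\Omega_t) : \Delta u = V_\lambda u \text{ weakly on } \Omega_t\}.
\]

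Next, I would identify the Dirichlet eigenspace as the slice of $K_t^\lambda$ cut out by vanishing Dirichlet trace. Since $\Omega_t$ is Lipschitz, a function $u \in H^1(\Omega_t)$ belongs to $H^1_0(\Omega_t)$ precisely when its Dirichlet trace vanishes, i.e.\ when the first component $f$ of $\Tr_t u = (f,g)$ is zero, i.e.\ $\Tr_t u \in \cD$. Hence the eigenspace
\[
	E_\lambda := \{u \in H^1_0(\Omega_t) : -\Delta u + Vu = \lambda u\}
\]
equals $\{u \in K_t^\lambda : \Tr_t u \in \cD\}$, and the restricted trace map $\Phi : E_\lambda \to E^u(t) \cap \cD$, $\Phi(u) = \Tr_t u$, is well defined. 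Surjectivity is immediate: given $(0,g) \in E^u(t) \cap \cD = \Tr_t(K_t^\lambda) \cap \cD$, Corollary~\ref{cor:alphagap} furnishes $u \in K_t^\lambda$ with $\Tr_t u = (0,g)$; since the first component is zero, $u \in H^1_0(\Omega_t)$, so $u \in E_\lambda$ and $\Phi(u) = (0,g)$.

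The only non-trivial remaining step is injectivity of $\Phi$. Suppose $\Phi(u) = 0$, so $u \in E_\lambda$ has both Dirichlet and Neumann traces vanishing on $\pOt$. Under the regularity assumption on $V$, elliptic regularity gives $u \in H^2_{\loc}(\Omega_t) \cap C^{1,\gamma}(\overline{\Omega_t})$ for some $\gamma > 0$, so the extension by zero to a larger ball $\Omega_{t+\epsilon}$ is an $H^1$ weak solution of $\Delta u = V_\lambda u$ there. Since this extension vanishes on the open annulus $\Omega_{t,t+\epsilon}$, Aronszajn's strong unique continuation theorem for $-\Delta + V_\lambda$ forces $u \equiv 0$. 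Hence $\Phi$ is a linear isomorphism, which simultaneously yields the eigenvalue characterization (nontrivial $E_\lambda$ iff nontrivial $E^u(t) \cap \cD$) and the multiplicity identity $\dim E_\lambda = \dim(E^u(t) \cap \cD)$.

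The only potential obstacle is the unique continuation step, since it depends on the regularity of $V$; the assumption $V \in C^{\lfloor n/2 \rfloor,1}$ inherited from Corollary~\ref{cor:alphagap} is more than sufficient to invoke classical Aronszajn-type results, so this is bookkeeping rather than a genuine difficulty.
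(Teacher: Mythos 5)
Your proof is correct and is the natural elaboration of the paper's one-line pointer that the corollary follows immediately from Corollary~\ref{cor:alphagap}: you absorb $\lambda$ into the potential via $V_\lambda = V - \lambda$ (so that $E^u(t)$ refers to the dichotomy of the rescaled system with potential $V_\lambda$), identify the eigenspace as the preimage of $\cD$ under $\Tr_t\big|_{K_t^\lambda}$, and correctly observe that the multiplicity equality rests on injectivity of $\Tr_t$ on $K_t^\lambda$, which you establish via zero extension to $\Omega_{t+\epsilon}$ and weak unique continuation. This last step is genuinely needed for the $\dim$ statement and is left entirely implicit in the paper, so your argument fills a real gap rather than merely restating the text.
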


Other boundary conditions (Neumann, Robin, etc.) can be characterized in a similar way by changing $\cD$ accordingly; see \cite{CJLS16,CJM15} for details.

Our construction thus gives a dynamical perspective on elliptic eigenvalue problems, similar to the Evans function \cite{KPP13}, which counts intersections between stable and unstable subspaces. (While traditionally developed for problems in one spatial dimension, some progress has been made on extending the Evans function to channel domains; see \cite{DN06,DN08,GLZ08,LP15,OS10}.)

This is also closely related to the Maslov index, a symplectic winding number that counts intersections of Lagrangian subspaces in a symplectic Hilbert space; see \cite{CJLS16,CJM15,DJ11,LS18}.

\section*{References}

\bibliography{MaslovHam}

\end{document}